\documentclass[reqno,12pt, oneside]{amsart}

\usepackage{enumerate}
\usepackage[nobysame]{amsrefs}
\usepackage{enumitem}
\usepackage{amssymb}
\usepackage{amsmath}
\usepackage{amsthm}
\usepackage{mathtools}
\usepackage{amscd}
\usepackage{microtype}
\usepackage[right=2cm,left=2cm, top=2.5cm, bottom=2.5cm,includehead]{geometry}
\usepackage{amsfonts}

\usepackage{tgpagella}
\usepackage{eulervm}

\usepackage[backgroundcolor=lightgray]{todonotes}

\setlength{\parindent}{30pt}

\theoremstyle{plain}
\newtheorem{theorem}{Theorem}
\newtheorem{corollary}[theorem]{Corollary}
\newtheorem{lemma}[theorem]{Lemma}
\newtheorem{proposition}[theorem]{Proposition}

\theoremstyle{remark}
\newtheorem{remark}[theorem]{Remark}
\newtheorem{example}[theorem]{Example}

\theoremstyle{definition}
\newtheorem{definition}[theorem]{Definition}
\newtheorem*{notation}{Notation}

\DeclarePairedDelimiter{\abs}{\lvert}{\rvert}
\DeclarePairedDelimiter{\vnorm}{\langle\!\langle}{\rangle\!\rangle}
\DeclarePairedDelimiter{\norm}{\lVert}{\rVert}
\DeclarePairedDelimiter{\set}{\lbrace}{\rbrace}
\DeclarePairedDelimiterX{\dset}[2]{\lbrace}{\rbrace}{#1\;\delimsize|\;#2}

\DeclareMathOperator{\diam}{diam}

\newcommand{\ball}{{\overline{B}}}

\newcommand{\N}{\mathbb N}

\DeclareMathOperator{\BLip}{BLip}
\DeclareMathOperator{\Lip}{Lip}
\DeclareMathOperator{\lip}{lip}

\allowdisplaybreaks

\title[Compactness in Lipschitz spaces and around]{Compactness in Lipschitz spaces and around}

\author{Jacek Gulgowski}
\address[J.~Gulgowski]{Institute of Mathematics\\
Faculty of Mathematics, Physics and Informatics\\
University of Gda\'nsk\\
80-308 Gda\'nsk\\
Poland}
\email[J.~Gulgowski]{dzak@mat.ug.edu.pl}

\author{Piotr Kasprzak}
\address[P. Kasprzak]{Department of Nonlinear Analysis and Applied Topology\\
Faculty of Mathematics and Computer Science\\
  Adam Mickiewicz University in Pozna\'n\\
  ul.\ Uniwersytetu Pozna\'nskiego 4\\
  61-614 Pozna\'n\\
  Poland}
\email[P.~Kasprzak]{kasp@amu.edu.pl}

\author{Piotr Ma\'ckowiak}
\address[P. Ma\'ckowiak]{Department of Nonlinear Analysis and Applied Topology\\
Faculty of Mathematics and Computer Science\\
  Adam Mickiewicz University in Pozna\'n\\
  ul.\ Uniwersytetu Pozna\'nskiego 4\\
  61-614 Pozna\'n\\
  Poland}
\email[P.~Ma\'ckowiak]{piotr.mackowiak@amu.edu.pl}

\keywords{Compactness criterion, equinormed set, precompactness, relative compactness, space of continuous mappings, space of bounded mappings, space of H\"older continuous mappings, space of Lipschitz continuous mappings}
\subjclass[2020]{Primary: 46B50, Secondary: 26A16, 46E15, 46E99}


\date{\today}

\begin{document}
\begin{abstract}
The aim of the paper is to characterize (pre)compactness in the spaces of Lipschitz/H\"older continuous mappings, which act from an arbitrary (not necessarily compact) metric space to a normed space. To this end some extensions and generalizations of already existing compactness criteria for the spaces of bounded and continuous mappings with values in normed spaces needed to be established. Those auxiliary results, which are interesting on their own since they use a concept of equicontinuity not seen in the literature, are based on an abstract compactness criterion related to the recently introduced notion  of an equinormed set.
\end{abstract}

\maketitle

\section{Introduction}

The importance of the spaces of Lipschitz and H\"older continuous functions in mathematics and other sciences cannot be overestimated. This introduction is too short even to list all the theories where such functions play a key role, let aside describe them in detail. Therefore, we will limit ourselves to mentioning only those instances which are related to compactness, i.e., the main object of our study. One of the oldest examples of an application of H\"older continuous functions dates back to 1930s and to Leray and Schauder, who used them in the theory of partial differential equations. (For more details, we refer the reader to the well-known monographs \cites{evans, gilbarg_trudinger}.) When talking about applications of the spaces of H\"older continuous functions one should not forget singular operators appearing in the fractional calculus. It turns out that such spaces are natural domains for the Riemann\--Liouville integral operator of fractional order or one of its most notable representatives, that is, the Abel operator (for more details, see~\cite{GV} and~\cite{samko}*{Chapters 3.1 and 3.2}). The other noteworthy situation where Lipschitz spaces emerge naturally is the Kantorovich–Rubinstein metric defined in the space of Borel measures on a compact metric space. These metrics are used to define the weak sequential convergence of measures and are applied in the treatment of the Monge--Kantorovich mass transport problem. (The introductory discussion on the subject, from the perspective of the spaces of Lipschitz continuous functions, may be found in~\cite{cobzas_book}*{Section~8.4}.)

Because of their applications -- in the previous paragraph we barely touched this topic -- Lipschitz and H\"older continuous functions constantly enjoy great interest among researchers. Each year numerous scientific articles and chapters are devoted just to studying their properties. Even whole books on Lipschitz and H\"older functions are written. Among the recent ones let us mention two monographs: one by Weaver, whose second (extended) edition was published in 2018 (see~\cite{weaver}), and the other by Cobza{\c{s}} \emph{et al.} published in 2019 (see~\cite{cobzas_book}). In those monographs one may find plenty of profound ideas, perspectives and applications related to the spaces of Lipschitz and H\"older continuous functions. What they lack, however, is a (strong) compactness criterion in such spaces. No matter how hard we tried, we could not find one in the literature, although, as it turned out, in the case of real-valued functions it had stayed hidden in a plain sight for more than half a century. In a paper of Cobza{\c{s}} from 2001 it is even stated: ``\emph{\textup[\ldots\textup] apparently there is no compactness criterion in spaces of H\"older functions, and some criteria given in the literature turned to be false}'' (see~\cite{cobzas2001}*{p.~9}).  

Over the last years several attempts to find a characterization of compactness in the spaces of Lipschitz and/or H\"older continuous mappings have been made. For example, in~\cite{cobzas2001} Cobza{\c{s}} proved a precompactness criterion in the space  consisting of those vector-valued Lipschitz maps that, roughly speaking, are restrictions of continuously differentiable function. In~\cite{banas_nalepa} the authors gave a sufficient condition for relative compactness of subsets of the space of H\"older continuous real-valued functions; this result was later repeated in~\cite{banas_nalepa2}.  It should be underlined here that this condition, which we will refer to as \emph{uniform local flatness}, is far from being necessary. Many natural Lipschitz and H\"older continuous functions (like, for example, $f \colon [0,1] \to \mathbb R$ given by $f(x)=x$) simply do not satisfy it. It turns out, however, that uniform local flatness can be used to characterize precompactness in the so-called \emph{little Lipschitz \textup(H\"older\textup) spaces}. (For the appropriate definitions and the compactness criterion in little Lipschitz spaces see Section~\ref{sec:55} below.) This was already known to Johnson in the real-valued setting (see~\cite{johnson}*{Theorem~3.2}) and to Garc\'{\i}a-Lirola \emph{et al.} in a slightly different context of little Lipschitz functions that are continuous with respect to a topology which needs not to come from the metric of the underlying space (see~\cite{GPR}*{Lemma~2.7}). Finally, based on the ideas of Bana\'s and Nalepa, in~\cite{Saiedinezhad} Saiedinezhad used the uniform local flatness to provide a sufficient condition for relative compactness of a non-empty subset of the space $C^{k,\alpha}(X,\mathbb R)$. (Let us recall that $C^{k,\alpha}(X,\mathbb R)$ consists of real-valued multivariate functions that are $k$-times continuously differentiable and whose partial derivatives of order $k$ are H\"older continuous with exponent $\alpha$; here $X$ is a compact subset of $\mathbb R^n$.) Clearly, this result cannot be necessary. However, Saiedinezhad used it to introduce and study a measure of non-compactness in the space $C^{k,\alpha}(X,\mathbb R)$.

When thinking about possible approaches to proving a compactness criterion in a given normed space, two obvious methods come to mind: direct and indirect one. The former one is self-explanatory. The latter one often consists of two steps: finding another -- simpler -- space (linearly) isomorphic with our given space and proving a compactness criterion in this simpler space. It turns out that for $\Lip_\varphi(X,E)$ as the simpler space we can take the product of $E$ and the space of bounded (or bounded and continuous) functions. (Here, and in the sequel, $\Lip_\varphi(X,E)$ stands for the space of H\"older continuous mappings defined on a metric space $(X,d)$ and with values in a normed space $E$.) The isomorphism can be constructed using the so-called de Leeuw’s map. There is only one ``little'' detail that needs to be taken care of. To a given H\"older continuous function $f \colon X \to E$ the de Leeuw’s map assigns the bounded continuous mapping $\Phi(f)$, which for distinct $x,y \in X$ is defined by the formula $\Phi(f)(x,y)=(f(x)-f(y))/\varphi(d(x,y))$; here $\varphi$ is the so-called comparison function. (More details on comparison functions and de Leeuw’s map can be found in Section~\ref{sec:compact_in_lip} below and in the references given therein.) So, as we can see, the function $\Phi(f)$ is not defined on the whole product $X \times X$, but on its proper subset $(X\times X) \setminus \dset{(x,x)}{x \in X}$, which may not be compact even if $X$ is. And the compactness criteria for $B(Y,E)$ or $C(Y,E)$, in general, are not easy to come by, especially if $E$ is infinite-dimensional and $Y$ is an arbitrary non-empty set or a topological space. Here, and in the rest of this paper, by $B(Y,E)$ we denote the space of bounded functions defined on the non-empty set $Y$ with values in the normed space $E$, and by $C(Y,E)$ -- its subspace of continuous functions. Of course, in the case of $C(Y,E)$ we assume that $Y$ is at least a topological space. Note also that we do not require $Y$ to be bounded or compact.

One example of such a criterion for real-valued functions can be found in the celebrated monograph of Dunford and Schwarz \cite{DS}. Using an approach of Phillips (see~\cite{phillips} as well as~\cite{eveson}) it is shown that a non-empty and bounded subset $A$ of $B(Y,\mathbb R)$ or $C(Y,\mathbb R)$ is relatively compact if and only if for every $\varepsilon>0$ there is a finite cover $U_1,\ldots,U_N$ of $Y$ and points $t_i \in U_i$ such that $\sup_{f \in A}\sup_{x \in U_i}\abs{f(x)-f(t_i)}\leq \varepsilon$ for $i=1,\ldots,N$ (cf.~\cite{DS}*{Theorem~IV.5.6, p.~260 and Theorem~IV.6.5, p.~266}); here $Y$ is either an arbitrary (non-empty) set in the case of $B(Y,\mathbb R)$, or a topological space in the case of $C(Y,\mathbb R)$. (The condition appearing above is equivalent to the condition (DS) discussed in the present article. Because of that, we will refer to it as the (DS) condition as well.)

In our study we have been interested in a compactness criterion for the space of H\"older continuous mappings taking values in an arbitrary normed space $E$. (The space $E$ may be over the field of either real or complex numbers.) What may come as a surprise (having in mind, for example, the Arzel\`a--Ascoli theorem) is that the mere replacement of absolute values with norms in the results of Dunford and Schwarz simply does not work. It is not difficult to provide examples of compact (or even finite) subsets of the spaces $B(Y, E)$ or $C(Y,E)$, with $Y$ being a metric space, which do not satisfy the condition (DS) – for more details and examples see Section~\ref{sec:discussion} below. Because of that, we had to find a completely new (not based on Phillip's results) approach to establishing compactness criteria in $B(Y,E)$ and/or $C(Y,E)$. We also needed a completely new condition equivalent to precompactness in those spaces, which is more general than the condition (DS). (Note that we do not assume that $E$ is complete, hence we speak of precompactness rather than relative compactness.) Our method of proof is based on a version of a very recent abstract compactness criterion proved in~\cite{GKM}. This result, roughly speaking, allows to translate precompactness to a certain inequality involving the norm of the considered space. We should also mention that the ideas of Ambrosetti  (see~\cite{Ambrosetti}*{Section~2} and~\cite{BBK}*{Lemma~1.2.8}) were of much help in our study. It is worth underlining here that our approach is more elementary than the one presented in the book of Dunford and Schwarz~\cite{DS} (in a sense that it does not use advanced concepts from functional analysis or topology). However, it allows not only to prove precompactness criteria in $B(Y, E)$, $C(Y,E)$ and various spaces of Lipschitz maps in the general case when $E$ is an arbitrary normed space, but also to draw some additional conclusion about the regularity of sets the $U_i$ covering the space $Y$. (It turns out that those sets might be chosen either closed or open, if the function space in question consists of continuous maps.) We refer the reader to Section~\ref{sec:discussion} below, where we provide a detailed discussion of the interplay between the dimension of the target space $E$ and various conditions related to precompactness, which are studied in the paper.

As already mentioned, using the results for $B(Y, E)$ and $C(Y,E)$, in the second part of the paper we prove precompactness criteria for various spaces of Lipschitz continuous functions defined on an arbitrary metric space. In general, we do not have to assume that the domain $X$ is either bounded or compact. It is also important to notice that in the case of unbounded domains  the space $\Lip_\varphi(X,E)$ and its linear subspace $\BLip_\varphi(X,E)$, consisting of bounded maps, do not coincide. Further, the default norm on $\BLip_\varphi(X,E)$ is stronger than the norm inherited from $\Lip_\varphi(X,E)$. Nevertheless, we are able to formulate and discuss (pre)compactness conditions for both spaces $\Lip_\varphi(X,E)$ and $\BLip_\varphi(X,E)$. We even study precompactness in another important space of Lipschitz continuous mappings, that is, in $\Lip_0(X,E)$. And, we do not stop there. We analyse the main condition appearing in those results, and show that what really matters for compact domains is a certain uniform behaviour of functions for arguments which are close to each other. This allows us to formulate an equivalent -- localized -- version of the compactness criterion in $\Lip_\varphi(X,E)$, which we later use to obtain a compactness criterion in the little Lipschitz space $\lip_\varphi(X,E)$.

\section{Preliminaries}
The main goal of this section is to introduce the notation and conventions used throughout the paper and to recall some basic facts. We will use default symbols to denote the classical spaces and norms. Let $(E, \norm{\cdot}_E)$ be a normed space over the field of either real or complex numbers. By $B(X,E)$ we will denote the normed space of all bounded $E$-valued maps defined on a non-empty set $X$, endowed with the supremum norm $\norm{f}_\infty:=\sup_{x \in X}\norm{f(x)}_E$. Assuming that $X$ is a topological space, by $C(X,E)$ we will denote the normed subspace of $B(X,E)$ consisting of all continuous maps $f \colon X \to E$. Given a set $A\subseteq B(X,E)$ and a point $x \in X$, let $A(x):=\dset{f(x)}{f \in A}$; we will call such a set a \emph{section} of $A$ at $x$. We also set $A(X):=\bigcup_{x \in X}A(x)$. The open and closed balls in a metric space $X$ with center at $x\in X$ and radius $r>0$ will be denoted by $B_X(x,r)$ and $\ball_X(x,r)$, respectively. Finally, if $A, B$ are two non-empty subsets of a linear space, then $A+B:=\dset{a+b}{a\in A,\ b\in B}$ and $A-B:=\dset{a-b}{a\in A,\ b\in B}$. For functions $f,g \colon X \to E$, mapping a set $X$ into a normed space $E$, the difference $f-g$ will be always defined pointwise.

\subsection{Precompactness and relative compactness} 

The definition of precompactness is well-known. However, it is sometimes mistaken with relative compactness. So, to avoid any ambiguity, let us state the definition explicitly. A metric space $X$ is \emph{precompact}, if its completion is compact. Equivalently, $X$ is precompact, if every sequence in $X$ contains a Cauchy subsequence. In the sequel, we will also use the fact that the classes of precompact and totally bounded metric spaces coincide. Let us recall that a metric space $X$ is \emph{totally bounded}, if for each $\varepsilon>0$ there is a finite collection of points $x_1,\ldots,x_n \in X$ such that $X= \bigcup_{i=1}^n\ball_X(x_i,\varepsilon)$; such a collection is often called an \emph{$\varepsilon$-net}. A subset $A$ of a metric space $X$ is \emph{precompact}, if it is precompact as a metric space itself (with the metric inherited from $X$).

A non-empty subset $A$ of a metric space $X$ is called \emph{relatively compact in $X$}, if the closure of $A$ with respect to $X$ is compact. Equivalently, $A$ is relatively compact in $X$, if each sequence in $A$ contains a subsequence convergent to an element in $X$.

In complete metric spaces the above notions connected with compactness coincide. For more information concerning precompact and relatively compact metric spaces see~\cite{BBK}*{Section~1.1} or~\cite{MV}*{Chapter~4}.

\subsection{Abstract compactness criterion}
The proofs of the main results of the paper will be based on a modification of a new abstract compactness criterion that was presented in~\cite{GKM}. 

Let $(E,\norm{\cdot}_E)$ be a normed space endowed with a family $\bigl\{\norm{\cdot}_i\bigr\}_{i \in I}$ of semi-norms on $E$. We assume that this family  satisfies the following two conditions:
\begin{enumerate}[label=\textbf{\textup{(A\arabic*)}}]
 \item\label{i} $\norm{x}_E=\sup_{i \in I}\norm{x}_i$ for $x \in E$,
 \item\label{ii} for every $i,j \in I$ there is an index $k \in I$ such that $\norm{x}_i \leq \norm{x}_k$ and $\norm{x}_j \leq \norm{x}_k$ for $x \in E$; in other words, the family $\bigl\{\norm{\cdot}_i\bigr\}_{i \in I}$ forms a directed set. 
\end{enumerate}
A non-empty subset $A$ of $E$ is called \emph{equinormed} (\emph{with respect to the family $\bigl\{\norm{\cdot}_i\bigr\}_{i \in I}$}), if for every $\varepsilon>0$ there exists $k \in I$ such that $\norm{x}_E\leq \varepsilon + \norm{x}_k$ for all $x \in A$.

Now, we are in position to state the afore-mentioned abstract compactness criterion. Although this is a slight modification of~\cite{GKM}*{Theorem~19}, we will skip the proof completely, as it is identical to the original one. 

\begin{theorem}\label{thm:compactness_ver2}
Let $(E,\norm{\cdot}_E)$ be a normed space equipped with a family $\bigl\{\norm{\cdot}_i\bigr\}_{i \in I}$ of semi-norms satisfying conditions~\ref{i} and~\ref{ii}. A non-empty subset $A$ of $E$ is precompact if and only if
\begin{enumerate}[label=\textup{(\roman*)}]
 \item\label{it:compactness_ver2_i} the set $A-A$ is equinormed, and
 \item\label{it:compactness_ver2_ii} for each sequence $(x_n)_{n \in \mathbb N}$ of elements of $A$ there is a subsequence $(x_{n_k})_{k \in \mathbb N}$ which is Cauchy with respect to each semi-norm $\norm{\cdot}_i$.
\end{enumerate}
\end{theorem}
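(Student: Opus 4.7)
The plan is to prove the two implications separately, with the bulk of the work concentrated in the forward direction, where the equinormed condition has to be extracted from precompactness; the backward direction is essentially a direct application of the defining inequality.

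For the easier implication, assume (i) and (ii) and take a sequence $(x_n) \subset A$. By (ii) pass to a subsequence $(x_{n_k})$ that is Cauchy with respect to every semi-norm $\norm{\cdot}_i$. To show that this subsequence is Cauchy in $\norm{\cdot}_E$, fix $\varepsilon>0$. Since $A-A$ is equinormed, there is an index $k \in I$ with $\norm{y}_E \leq \varepsilon/2 + \norm{y}_k$ for all $y \in A-A$. Apply this to $y = x_{n_p} - x_{n_q}$ and use the $\norm{\cdot}_k$-Cauchy property of $(x_{n_k})$ to finish.

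For the forward implication, (ii) is immediate from (A1): a $\norm{\cdot}_E$-Cauchy subsequence (which exists by precompactness) is Cauchy with respect to every $\norm{\cdot}_i$ because $\norm{\cdot}_i \leq \norm{\cdot}_E$. The work is proving that $A-A$ is equinormed. First observe that since $A$ is precompact, so is $A-A$ (it is the image of $A \times A$ under the continuous subtraction map, and products of precompact sets are precompact). Fix $\varepsilon>0$ and choose a finite $\varepsilon/3$-net $y_1,\dots,y_n$ for $A-A$. For each $j$, condition (A1) gives an index $i_j$ with $\norm{y_j}_{i_j} \geq \norm{y_j}_E - \varepsilon/3$. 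Then condition (A2), applied inductively, furnishes a single index $k \in I$ dominating all the $\norm{\cdot}_{i_j}$'s. For an arbitrary $y \in A-A$, pick $y_j$ with $\norm{y - y_j}_E < \varepsilon/3$ and estimate
\begin{equation*}
\norm{y}_E \leq \norm{y_j}_E + \tfrac{\varepsilon}{3} \leq \norm{y_j}_{i_j} + \tfrac{2\varepsilon}{3} \leq \norm{y}_{i_j} + \norm{y - y_j}_{i_j} + \tfrac{2\varepsilon}{3} \leq \norm{y}_k + \varepsilon,
\end{equation*}
using $\norm{y - y_j}_{i_j} \leq \norm{y - y_j}_E$ at the last step. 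Thus $A-A$ is equinormed.

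The main conceptual point (and the only nontrivial step) is turning a finite $\varepsilon$-net in $A-A$ into a \emph{single} dominating semi-norm $\norm{\cdot}_k$; without the directedness axiom (A2) one would only get a different index for each element of the net, which would be useless. The rest of the argument is a textbook three-$\varepsilon$ estimate, and I do not anticipate any subtlety beyond correctly shuffling the triangle inequalities on $\norm{\cdot}_E$ and $\norm{\cdot}_{i_j}$.
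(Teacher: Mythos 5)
Your proof is correct and complete: the sufficiency argument (combine the equinormed inequality for $A-A$ with Cauchyness in the single dominating semi-norm) and the necessity argument (pass to $A-A$, take a finite net, use \ref{i} to pick a near-optimal index for each net point and \ref{ii} to merge them into one index $k$) are exactly the standard route, and all the triangle-inequality estimates check out. The paper itself omits the proof entirely, deferring to Theorem~19 of~\cite{GKM}, so there is nothing to compare against line by line; your write-up supplies precisely the argument the paper leaves implicit.
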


\begin{remark}\label{rem:boundedness}
In Theorem~\ref{thm:compactness_ver2} we need not require the set $A$ to be bounded, as its boundedness follows easily from other assumptions. It is clear that if $A$ is precompact, it is also bounded. On the other hand, suppose that the set $A$ satisfies the conditions~\ref{it:compactness_ver2_i} and~\ref{it:compactness_ver2_ii} above, but is not bounded. Then, there is a sequence $(y_n)_{n \in \mathbb N}$ in $A$ such that $\norm{y_n}_E \to +\infty$ as $n \to +\infty$. Let $(y_{n_k})_{k \in \mathbb N}$ be its subsequence which is Cauchy with respect to each semi-norm $\norm{\cdot}_i$. In particular, $(y_{n_k})_{k \in \mathbb N}$ is bounded in each semi-norm, that is, $M_i:=\sup_{k \in \mathbb N}\norm{y_{n_k}}_i<+\infty$ for every $i \in I$. As the set $A-A$ is equinormed, there is an index $j \in I$ such that $\norm{\xi-\eta}_E \leq 1+\norm{\xi-\eta}_j$ for all $\xi,\eta \in A$. And then we have $\norm{y_{n_k}-y_{n_1}}_E \leq 1+ \norm{y_{n_k}-y_{n_1}}_j \leq 1+2M_j$ for $k \in \mathbb N$. This means that $\sup_{k \in \mathbb N}\norm{y_{n_k}}_E<+\infty$, which is impossible.
\end{remark}

\section{Compactness in $B(X,E)$}

In Section~\ref{sec:compactness_B_X_E} we provide a full characterization of precompact subsets of the space of bounded mappings with values in a normed space. The proof of this result will not be a mere rewriting of the argument used in the case of real-valued functions. (Here, talking about the real-valued setting, we mainly think about~\cite{DS}*{Theorem~6, p.~260}.) A completely new approach is needed, partly because in the vector-valued case 
the condition imposed on a non-empty set $A\subseteq B(X,E)$ that guarantees its precompactness is different from the ``classical'' condition considered in~\cite{DS} (cf. condition (DS) below). Our proof will be based on Theorem~\ref{thm:compactness_ver2}. We will therefore begin Section~\ref{sec:compactness_B_X_E} by defining an appropriate family of semi-norms on $B(X,E)$. Next, we will introduce a new condition (called the condition (B)) that is the main ingredient of our precompactness criterion. Finally, we will state and prove the criterion itself.

In Section~\ref{sec:discussion_of_B} we discuss the condition (B) little bit more. We will provide examples illustrating that it cannot be easily simplified. We will also show that in the case of real-valued functions it agrees with the condition presented in~\cite{DS}*{Theorem~6, p.~260}.

\subsection{Compactness criterion in $B(X,E)$}\label{sec:compactness_B_X_E} 
Let us fix a non-empty set $X$ and a normed space $(E,\norm{\cdot}_E)$. Furthermore, let $\mathcal F$ be the family of all non-empty and finite subsets of $X$. For any $Y \in \mathcal F$ set
\begin{equation}\label{eq:norms}
\norm{f}_Y:=\sup_{x \in Y}\norm{f(x)}_E. 
\end{equation}
It is clear that the family of semi-norms $\set[\big]{\norm{\cdot}_Y}_{Y \in \mathcal F}$ satisfies the conditions~\ref{i} and \ref{ii}.

Let us introduce a condition which plays a key role in our considerations. 

\begin{definition}
A non-empty set $A$ in $B(X,E)$ is said to satisfy the condition (B), if for every $\varepsilon>0$ there is a finite cover $U_1,\ldots,U_N$ of $X$ such that for every $i \in \{1,\ldots,N\}$ and every pair $x, y \in U_i$ for all $f \in A$ we have $\abs[\big]{\norm{f(x)}_E - \norm{f(y)}_E} \leq \varepsilon$.
\end{definition}

Now, we are ready to state the precompactness criterion in $B(X,E)$.

\begin{theorem}\label{thm:compactness_BXE}
A non-empty subset $A$ of $B(X,E)$ is precompact if and only if the set $A-A$ satisfies the condition \textup{(B)} and for every $x \in X$ the sections $A(x)$ are precompact.
\end{theorem}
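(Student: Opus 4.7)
My plan is to verify the hypotheses of the abstract criterion \ref{thm:compactness_ver2} for the family $\bigl\{\norm{\cdot}_Y\bigr\}_{Y \in \mathcal F}$ introduced in \eqref{eq:norms}. For the sufficiency direction, the key observation is that condition~(B) on $A-A$ is essentially an equinormedness statement in disguise: given $\varepsilon>0$ and the cover $U_1,\ldots,U_N$ furnished by~(B), I would pick an arbitrary base point $t_i \in U_i$ in each piece. For any $g \in A-A$ and any $x \in X$, choosing $i$ with $x \in U_i$ yields $\norm{g(x)}_E \leq \varepsilon + \norm{g(t_i)}_E$, and taking the supremum over $x$ gives $\norm{g}_\infty \leq \varepsilon + \norm{g}_{\set{t_1,\ldots,t_N}}$, which is exactly hypothesis~\ref{it:compactness_ver2_i} of Theorem~\ref{thm:compactness_ver2}. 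For hypothesis~\ref{it:compactness_ver2_ii}, I would iterate~(B) with $\varepsilon = 1/m$ for every $m \in \N$, collect the associated base points into a countable set $T \subseteq X$, and use precompactness of each $A(t)$, $t \in T$, together with a standard diagonal extraction to produce, from a given sequence in $A$, a subsequence $(f_{n_k})$ that is Cauchy in $E$ at every $t \in T$. The base-point inequality above applied to $g = f_{n_k}-f_{n_\ell}$ then promotes this to uniform Cauchyness on all of $X$, which a fortiori gives Cauchyness with respect to every $\norm{\cdot}_Y$, $Y \in \mathcal F$.

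For the necessity direction, precompactness of each section $A(x)$ is immediate because the evaluation map $f \mapsto f(x)$ is $1$-Lipschitz from $B(X,E)$ to $E$. To derive~(B) for $A-A$, I would fix $\varepsilon>0$, use total boundedness to select an $\varepsilon/6$-net $f_1,\ldots,f_n$ of $A$, so that $A-A$ lies within $\varepsilon/3$ of the finite set $\set{f_i-f_j : 1 \leq i,j \leq n}$ in the supremum norm. For each pair $(i,j)$ I would partition $X$ by grouping points $x$ according to which subinterval of length $\varepsilon/3$ of $[0,\norm{f_i-f_j}_\infty]$ the scalar $\norm{(f_i-f_j)(x)}_E$ belongs to; the common refinement of these finitely many finite partitions is a finite cover of $X$ on which $x \mapsto \norm{(f_i-f_j)(x)}_E$ oscillates by at most $\varepsilon/3$ for every pair $(i,j)$ simultaneously. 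A routine three-$\varepsilon$ estimate then transfers this oscillation bound from the finite collection of $f_i-f_j$ to an arbitrary $g \in A-A$, yielding~(B).

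The main obstacle, as I see it, is the mismatch between the scalar quantity $\abs[\big]{\norm{g(x)}_E-\norm{g(y)}_E}$ controlled by~(B) and the vector quantity $\norm{g(x)-g(y)}_E$ that one would naively expect to bound (the latter being the vector-valued analogue of the Dunford--Schwarz condition discussed in the introduction). The base-point trick in the sufficiency argument is what bridges this gap: it trades full control over vector differences for control over norms at finitely many distinguished points, which is precisely what the section-precompactness assumption supplies. Analogously, on the necessity side, the need to partition $X$ via level sets of the \emph{scalar} maps $\norm{(f_i-f_j)(\cdot)}_E$ -- rather than via a partition adapted to the vector-valued maps themselves -- is what keeps the cover finite while still delivering the correct scalar-oscillation conclusion of~(B).
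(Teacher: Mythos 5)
Your argument is correct, and the sufficiency half is essentially the paper's: the base-point inequality $\norm{g(x)}_E\leq\varepsilon+\norm{g(t_i)}_E$ extracted from condition (B) is exactly how the paper proves both equinormedness of $A-A$ and the upgrade from Cauchyness at the countable set $T$ to Cauchyness everywhere (Lemma~\ref{lem:5} and the lemma following it), after which Theorem~\ref{thm:compactness_ver2} finishes the job. Where you genuinely diverge is the necessity of (B). The paper's Lemma~\ref{lem:7} deliberately works from the two abstract hypotheses of Theorem~\ref{thm:compactness_ver2} (equinormedness of $A-A$ plus sectionwise Cauchy subsequences) rather than from precompactness itself, and therefore has to manufacture a finite family of representative functions the hard way: it covers $\bigcup_i A(t_i)$ by small balls, sorts $A$ into finitely many classes $A_\psi$ according to which balls the values $f(t_i)$ fall into, and picks one representative $h_\psi$ per class before running the level-set partition of the scalar maps $x\mapsto\norm{(h_p-h_q)(x)}_E$. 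You instead invoke total boundedness of $A$ in $B(X,E)$ directly, take a finite $\varepsilon/6$-net $f_1,\ldots,f_n$, and apply the same level-set partition to the finitely many differences $f_i-f_j$; the concluding three-$\varepsilon$ estimate is identical in spirit. Your route is shorter and perfectly valid for proving the theorem as stated. What the paper's more laborious formulation buys is reusability: the same $A_\psi$-construction is recycled almost verbatim in the proof of Proposition~\ref{prop:DS_equivalent_B}, where one must pass from (B) to (DS) under hypotheses that do not include total boundedness of $A$ in the supremum norm, so the finite net has to be built from section/image precompactness alone. No gaps; only the usual bookkeeping (half-open subintervals so the level sets form a cover, and the observation that the common refinement of $n^2$ finite partitions is finite) would need to be spelled out in a full write-up.
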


\begin{remark}\label{rem:relatively_compact}
If $E$ is a Banach space, then the space $B(X,E)$ is clearly complete. Therefore, the classes of its relatively compact and precompact subsets coincide. Hence, in such a case Theorem~\ref{thm:compactness_BXE} provides necessary and sufficient conditions for relative compactness of a non-empty subset of $B(X,E)$.
\end{remark}

Theorem~\ref{thm:compactness_BXE} is a consequence of the following three lemmas and Theorem~\ref{thm:compactness_ver2}.

\begin{lemma}\label{lem:5}
Let $A$ be a non-empty subset of $B(X,E)$ such that the algebraic difference $A-A$ satisfies the condition \textup{(B)}. Moreover, assume that the sections $A(x)$ are precompact for every $x \in X$. Then, each sequence $(f_n)_{n \in \mathbb N}$ of elements of the set $A$ contains a subsequence $(f_{n_k})_{k\in \mathbb N}$ which is Cauchy with respect to each semi-norm $\norm{\cdot}_Y$ given by~\eqref{eq:norms}. 
\end{lemma}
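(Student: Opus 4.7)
The plan is to construct the desired subsequence by a two-level diagonal argument: a countable (in $m$) refinement of covers from condition (B) applied to smaller and smaller $\varepsilon$'s, combined with precompactness of the sections to extract Cauchy behavior at finitely many ``sample'' points per cover.

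First I would reduce the statement. Since the semi-norms $\norm{\cdot}_Y$ are indexed by finite $Y \subseteq X$, a sequence $(g_k)$ is Cauchy in every $\norm{\cdot}_Y$ if and only if $(g_k(x))_{k \in \mathbb N}$ is Cauchy in $E$ for every $x \in X$. (This is immediate for a single finite $Y$ by taking the maximum of the finitely many pointwise Cauchy thresholds.) So it suffices to extract a subsequence that is pointwise Cauchy on all of $X$.

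Next I would build a countable set of test points. For each $m \in \mathbb N$, apply condition (B) to $A-A$ with $\varepsilon = 1/m$ to obtain a finite cover $U_1^m, \ldots, U_{N_m}^m$ of $X$, and pick one point $t_i^m \in U_i^m$ for each $i$. The set $T := \dset{t_i^m}{m \in \mathbb N,\ 1 \leq i \leq N_m}$ is countable. Since each section $A(t)$ is precompact in $E$, a standard diagonal extraction produces a subsequence $(f_{n_k})_{k \in \mathbb N}$ such that $(f_{n_k}(t))_{k \in \mathbb N}$ is Cauchy in $E$ for every $t \in T$.

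Finally I would show this subsequence is Cauchy at every $x \in X$. Fix $x \in X$ and $\varepsilon>0$. Choose $m$ with $1/m < \varepsilon / 2$ and an index $i$ with $x \in U_i^m$. For any $k, \ell$, the function $g_{k,\ell} := f_{n_k} - f_{n_\ell}$ lies in $A - A$, so condition (B) yields
\begin{equation*}
\bigl|\norm{g_{k,\ell}(x)}_E - \norm{g_{k,\ell}(t_i^m)}_E\bigr| \leq \tfrac{1}{m},
\end{equation*}
hence $\norm{f_{n_k}(x) - f_{n_\ell}(x)}_E \leq \norm{f_{n_k}(t_i^m) - f_{n_\ell}(t_i^m)}_E + \tfrac{1}{m}$. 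Since $(f_{n_k}(t_i^m))_{k \in \mathbb N}$ is Cauchy, the right-hand side is $< \varepsilon$ for all sufficiently large $k, \ell$, which proves the pointwise Cauchy property at $x$, and hence (by the reduction above) the Cauchy property in every semi-norm $\norm{\cdot}_Y$.

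The main conceptual hurdle is that condition (B) controls only the quantity $\abs{\norm{g(x)}_E - \norm{g(y)}_E}$ rather than the stronger $\norm{g(x) - g(y)}_E$, so one might worry it is insufficient for transferring Cauchy behavior from the sample points $t_i^m$ to arbitrary $x \in X$. This is resolved by applying (B) to the difference $g_{k,\ell} \in A-A$: the Cauchy property at $t_i^m$ makes $\norm{g_{k,\ell}(t_i^m)}_E$ small, and then (B) bounds $\norm{g_{k,\ell}(x)}_E = \norm{f_{n_k}(x) - f_{n_\ell}(x)}_E$ directly, which is exactly what is needed.
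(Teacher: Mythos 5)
Your proposal is correct and follows essentially the same route as the paper's proof: the same countable family of covers for $\varepsilon=1/m$ with one sample point per set, the same diagonal extraction using precompactness of the sections, and the same application of condition (B) to the difference $f_{n_k}-f_{n_\ell}\in A-A$ to transfer the Cauchy property from the sample points to an arbitrary $x$. The only (welcome) addition is that you make explicit the reduction from Cauchyness in every semi-norm $\norm{\cdot}_Y$ to pointwise Cauchyness, which the paper leaves implicit.
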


\begin{proof}
For each $i \in \mathbb N$ choose finitely many sets $U_1^i,\ldots,U_{N_i}^i$ satisfying the condition (B) with $\varepsilon=\frac{1}{i}$. In each set $U_j^i$ let us also fix a point $t_j^i$ and define $T:=\dset{t_j^i \in X}{\text{$i \in \mathbb N$ and $j=1,\ldots,N_i$}}$.

Now, let $(f_n)_{n \in \mathbb N}$ be a given sequence of elements of $A$. By the classical diagonal argument and the fact that all the sections $A(x)$ are precompact, we may select a subsequence  $(f_{n_k})_{k \in \mathbb N}$ which is Cauchy at every point $t \in T$. We will show that this subsequence is Cauchy at every $x \in X$. Take an arbitrary $x \in X$, fix $\varepsilon>0$ and choose $i \in \mathbb N$ such that $\frac{1}{i}\leq \frac{1}{2}\varepsilon$. As the sets $U_1^i,\ldots,U_{N_i}^i$ cover $X$, the point $x$ must belong to at least one of them; denote this set by $U_j^i$. The sequence $(f_{n_k}(t_j^i))_{k \in \mathbb N}$ is Cauchy in $E$, so there is an index $K$ such that $\norm{f_{n_k}(t_j^i) - f_{n_l}(t_j^i)}_E \leq \frac{1}{2}\varepsilon$ for all $k,l \geq K$. Then, in view of the condition (B), for all such indices $k,l$ we have $\norm{f_{n_k}(x)-f_{n_l}(x)}_E=\norm{(f_{n_k}-f_{n_l})(x)}_E - \norm{(f_{n_k}-f_{n_l})(t_j^i)}_E + \norm{f_{n_k}(t_j^i) - f_{n_l}(t_j^i)}_E \leq \varepsilon$. This proves that the sequence  $(f_{n_k}(x))_{k \in \mathbb N}$ is Cauchy.
\end{proof}

\begin{lemma}
Let $A$ be a non-empty subset of $B(X,E)$ such that $A-A$ satisfies the condition \textup{(B)}. Then, the algebraic difference $A-A$ is equinormed \textup(with respect to the family of semi-norms $\{\norm{\cdot}_Y\}_{Y \in \mathcal F}$ defined by~\eqref{eq:norms}\textup).
\end{lemma}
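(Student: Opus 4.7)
The plan is to unpack the definitions and show that the finite cover provided by condition (B) produces a finite sampling set $Y \in \mathcal{F}$ that controls the supremum norm uniformly on $A-A$, up to $\varepsilon$.

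Fix $\varepsilon > 0$. Since $A - A$ satisfies condition (B), there exists a finite cover $U_1, \ldots, U_N$ of $X$ such that for every $i \in \{1, \ldots, N\}$, every pair $x, y \in U_i$, and every $g \in A - A$ we have $\bigl|\norm{g(x)}_E - \norm{g(y)}_E\bigr| \leq \varepsilon$. Discarding any empty $U_i$ if necessary, pick a point $t_i \in U_i$ for each $i$ and set $Y := \{t_1, \ldots, t_N\} \in \mathcal{F}$.

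Now take any $g \in A - A$ and any $x \in X$. Since the $U_i$ cover $X$, there is some index $i$ with $x \in U_i$, and applying the condition (B) estimate to the pair $(x, t_i)$ gives
\[
\norm{g(x)}_E \leq \norm{g(t_i)}_E + \varepsilon \leq \norm{g}_Y + \varepsilon.
\]
Taking the supremum over $x \in X$ yields $\norm{g}_\infty \leq \varepsilon + \norm{g}_Y$ for every $g \in A - A$, which is exactly the definition of equinormedness with respect to $\{\norm{\cdot}_Y\}_{Y \in \mathcal{F}}$.

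There is essentially no obstacle here: the argument is a direct translation of condition (B) into the language of semi-norms, since the finite set of chosen representatives $t_i$ is already a finite subset of $X$ and thus indexes one of the semi-norms in the family. The only minor bookkeeping issue is ensuring the cover members are non-empty so that representatives $t_i$ can be chosen, which is handled trivially by dropping empty sets from the cover.
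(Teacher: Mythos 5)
Your proof is correct and follows essentially the same route as the paper: extract the cover from condition (B), choose one representative per cover element to form the finite set $Y$, and bound the supremum norm by $\norm{\cdot}_Y$ plus $\varepsilon$. Your version is marginally cleaner in that you bound $\norm{g(x)}_E$ pointwise and then take the supremum, whereas the paper first picks a near-maximizer $x_\ast$ of $\norm{f-g}_\infty$ and therefore has to split $\varepsilon$ in half; both are valid.
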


\begin{proof}
Similar to what we did in the proof of Lemma~\ref{lem:5} for each $i \in \mathbb N$ we choose finitely many sets $U_1^i,\ldots,U_{N_i}^i$ satisfying the condition (B) with $\varepsilon=\frac{1}{i}$. In each set $U_j^i$ we fix a point $t_j^i$. And, finally, we define $T:=\dset{t_j^i \in X}{\text{$i \in \mathbb N$ and $j=1,\ldots,N_i$}}$.

Now, let us fix $\varepsilon>0$ and let $i \in \mathbb N$ be such that $\frac{1}{i}\leq \frac{1}{2}\varepsilon$. Moreover, let $Y:=\{t_1^i,\ldots,t_{N_i}^i\}$. If $f,g \in A$, then there exists a point $x_\ast \in X$ such that $\norm{f-g}_{\infty} \leq \frac{1}{2}\varepsilon + \norm{(f-g)(x_\ast)}_E$. But $x_\ast$ belongs to one of the sets $U_1^i,\ldots,U_{N_i}^i$, say $U_j^i$, and so
\begin{align*}
 \norm{f-g}_{\infty} &\leq \tfrac{1}{2}\varepsilon + \norm{(f-g)(x_\ast)}_E\\
  & = \tfrac{1}{2}\varepsilon + \norm{(f-g)(x_\ast)}_E - \norm{(f-g)(t_j^i)}_E + \norm{(f-g)(t_j^i)}_E\\
	& \leq \varepsilon + \norm{f-g}_Y.
	\end{align*}
Therefore, the set $A-A$ is equinormed.	
\end{proof}

\begin{lemma}\label{lem:7}
Let $A$ be a non-empty subset of $B(X,E)$ such that $A-A$ is equinormed \textup(with respect to the family of semi-norms $\{\norm{\cdot}_Y\}_{Y \in \mathcal F}$ defined by~\eqref{eq:norms}\textup). Moreover, assume that each sequence $(f_n)_{n \in \mathbb N}$ of elements of the set $A$ contains a subsequence $(f_{n_k})_{k\in \mathbb N}$ which is Cauchy with respect to each semi-norm $\norm{\cdot}_Y$. Then, $A-A$ satisfies the condition \textup{(B)} and the sections $A(x)$ for $x \in X$ are precompact. 
\end{lemma}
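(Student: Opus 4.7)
The plan is to address the two conclusions separately. Precompactness of each section $A(x)$ is essentially a re-reading of hypothesis (ii): taking $Y=\{x\}\in\mathcal F$, so that $\norm{f}_{\{x\}}=\norm{f(x)}_E$, the hypothesis extracts from every sequence $(f_n(x))_{n\in\mathbb N}$ in $A(x)$ a subsequence $(f_{n_k}(x))_{k\in\mathbb N}$ which is Cauchy in $E$, whence $A(x)$ is precompact.

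The core of the lemma is condition (B) for $A-A$. Here the strategy is to first combine both hypotheses via Theorem~\ref{thm:compactness_ver2} to deduce that $A$ itself is precompact in $(B(X,E),\norm{\cdot}_\infty)$, and then to carve up $X$ using a finite sup-norm net drawn from $A$. Concretely, fix $\varepsilon>0$, choose $g_1,\ldots,g_k\in A$ forming an $\tfrac{\varepsilon}{6}$-net for $A$ in $\norm{\cdot}_\infty$, and partition $X$ by intersecting the level sets of the finitely many bounded scalar maps $x\mapsto\norm{(g_i-g_j)(x)}_E$, $i,j\in\{1,\ldots,k\}$, at resolution $\tfrac{\varepsilon}{3}$. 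This yields a finite cover $\{U_\alpha\}$ of $X$ with the property that $\abs{\norm{(g_i-g_j)(x)}_E-\norm{(g_i-g_j)(y)}_E}\leq\tfrac{\varepsilon}{3}$ for every $i,j$ and all $x,y\in U_\alpha$.

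Given any $h=f_1-f_2\in A-A$, one then picks $g_i,g_j$ from the net with $\norm{f_1-g_i}_\infty,\norm{f_2-g_j}_\infty\leq\tfrac{\varepsilon}{6}$, so that $\norm{h-(g_i-g_j)}_\infty\leq\tfrac{\varepsilon}{3}$. A three-term triangle inequality—inserting $\norm{(g_i-g_j)(x)}_E$ and $\norm{(g_i-g_j)(y)}_E$ between $\norm{h(x)}_E$ and $\norm{h(y)}_E$—then bounds $\abs{\norm{h(x)}_E-\norm{h(y)}_E}$ by $\varepsilon$ whenever $x,y$ share a common $U_\alpha$, which is exactly condition (B). I expect the main conceptual obstacle to be recognising that the correct route is through precompactness of $A$ in sup-norm rather than a direct argument on $A-A$: once that reduction is made, the partition is built from only finitely many scalar functions $\norm{(g_i-g_j)(\cdot)}_E$ with bounded range, and the uniform (B)-bound for \emph{every} $h\in A-A$ follows because the $g_i-g_j$ approximate $A-A$ at the matching resolution $\tfrac{\varepsilon}{3}$.
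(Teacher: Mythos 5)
Your proof is correct, and its second half (the level-set partition of $X$ by the finitely many scalar maps $x\mapsto\norm{(g_i-g_j)(x)}_E$, followed by the three-term triangle inequality) is exactly the mechanism the paper uses. Where you genuinely diverge is in how the finite approximating family is produced. The paper does not invoke the sufficiency direction of Theorem~\ref{thm:compactness_ver2} inside this lemma: it builds representatives $h_1,\ldots,h_N$ by hand, first using equinormedness to reduce $\norm{f-g}_\infty$ to a finite set $Y=\{t_1,\ldots,t_m\}$, then covering $\bigcup_i A(t_i)$ by small balls and grouping the elements of $A$ according to which balls their values at the $t_i$ fall into; this yields $\norm{f-h_f}_\infty\leq\tfrac{1}{8}\varepsilon$ without ever asserting that $A$ is precompact. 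You instead observe that the two hypotheses of the lemma are literally conditions \ref{it:compactness_ver2_i} and \ref{it:compactness_ver2_ii} of Theorem~\ref{thm:compactness_ver2} (the family $\{\norm{\cdot}_Y\}_{Y\in\mathcal F}$ satisfies \ref{i} and \ref{ii}, as noted in the paper), conclude that $A$ is precompact in $\norm{\cdot}_\infty$, and extract an $\tfrac{\varepsilon}{6}$-net directly. This is not circular — Theorem~\ref{thm:compactness_ver2} is an independently established abstract result, and Lemma~\ref{lem:7} only feeds the \emph{converse} direction of Theorem~\ref{thm:compactness_BXE} — and it shortens the argument noticeably; the paper's hand-built net buys only a more self-contained proof that makes the role of equinormedness explicit. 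One small point worth stating if you write this up: the level sets give a \emph{finite} cover because each $g_i$ lies in $B(X,E)$, so the finitely many maps $\norm{(g_i-g_j)(\cdot)}_E$ have ranges in a common bounded interval.
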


\begin{proof}
Note that for any $x\in E$ the sequence $(f_n)_{n \in \mathbb N}$ in $B(X,E)$ is Cauchy with respect to the semi-norm induced by the singleton $Y:=\{x\}$ if and only if the sequence $(f_n(x))_{n \in \mathbb N}$ is Cauchy in $E$. Thus, we can easily conclude that each section $A(x)$, where $x\in X$, is precompact. So, we only need to show that the algebraic difference $A-A$ satisfies the condition (B).  

Let $\varepsilon>0$ be fixed. Because the algebraic difference $A-A$ is equinormed there is a finite set $Y:=\{t_1,\ldots,t_m\} \subseteq X$ such that $\norm{f-g}_\infty \leq \frac{1}{16}\varepsilon + \norm{f-g}_Y$ for any $f,g \in A$. From the first part of the proof we know that each section $A(t_i)$, where $i=1,\ldots,m$, is precompact. Therefore, precompact is also the union $\bigcup_{i=1}^m A(t_i)$. Hence, there exists a finite collection of points $e_1,\ldots,e_n \in E$ such that $\bigcup_{i=1}^m A(t_i) \subseteq \bigcup_{i=1}^n B_E(e_i,\frac{1}{32}\varepsilon)$. By $\Psi$ denote the set of all functions $\psi\colon \{1,\ldots,m\}\to \{1,\ldots,n\}$, and for each $\psi \in \Psi$ let $A_\psi := \dset[\big]{f\in A}{\text{$f(t_i) \in B_E(e_{\psi(i)},\tfrac{1}{32}\varepsilon)$ for every $i=1,\ldots,m$}}$. Moreover, by $\Omega$ denote the set of all those $\psi\in\Psi$ for which $A_\psi\neq\emptyset$. It is obvious that for each $f\in A$ there exists $\psi_f\in \Omega$ such that $f\in A_{\psi_f}$. Let, additionally, for each $\psi\in\Omega$ a function $h_\psi\in A_\psi$ be fixed. As the set $\Omega$ is finite, there are only finitely many functions $h_\psi$. For simplicity, let us denote them as $h_1,\ldots,h_N$. Furthermore, let us define the function $G \colon X \to \mathbb R^{N^2}$ by the formula
\begin{align*}
& G(x):=\bigl(\norm{(h_1-h_1)(x)}_E, \norm{(h_1-h_2)(x)}_E, \ldots, \norm{(h_1-h_N)(x)}_E, \norm{(h_2-h_1)(x)}_E,\\
&\hspace{4.5cm} \ldots, \norm{(h_2-h_N)(x)}_E, \ldots, \norm{(h_N-h_1)(x)}_E,\ldots, \norm{(h_N-h_N)(x)}_E\bigr).
\end{align*}
In view of Remark~\ref{rem:boundedness} the set $A$ is bounded in $B(X,E)$. Let $M:=\sup_{f \in A}\norm{f}_{\infty}$. Then, $G(X)\subseteq [0,2M]^{N^2}$. Let $J_1,\ldots,J_k$ be a family of non-empty intervals of $\mathbb R$ covering $[0,2M]$ of diameters at most $\frac{1}{2}\varepsilon$. Moreover, let
\begin{align*}
& U_\lambda:=G^{-1}\bigl(J_{\lambda_{1,1}}\times J_{\lambda_{1,2}} \times \cdots \times J_{\lambda_{1,N}}\times J_{\lambda_{2,1}} \times \cdots \times J_{\lambda_{2,N}} \times \cdots \times J_{\lambda_{N,1}} \times \cdots \times J_{\lambda_{N,N}}\bigr), 
\end{align*}
where $\lambda:=(\lambda_{1,1}, \lambda_{1,2},\ldots, \lambda_{1,N}, \lambda_{2,1},\ldots, \lambda_{2,N}, \ldots, \lambda_{N,1}, \ldots, \lambda_{N,N}) \in \{1,\ldots,k\}^{N^2}$. The sets $U_\lambda$ form a finite cover of $X$. Let $\Lambda$ be the set of all those indices $\lambda$ for which $U_\lambda \neq \emptyset$.

We will show that for the sets $U_\lambda$, where $\lambda \in \Lambda$, the condition (B) holds. Let us fix $\lambda \in \Lambda$, mappings $f,g \in A$ and points $x,y \in U_\lambda$. Furthermore, let $h_f$ and $h_g$ be those maps among $h_1,\ldots,h_N$ for which we have $h_f \in A_{\psi_f}$ and $h_g \in A_{\psi_g}$. (Note that $h_f$ and $h_g$ may coincide.) Further, instead of writing $\lambda_{p,q}$ with the appropriate numbers $p,q \in \{1,\ldots,N\}$ corresponding to $h_f$ and $h_g$, we will simply write $\lambda_{f,g}$. By the definition of the sets $A_\psi$, we have $f(t_i), h_f(t_i) \in B_E(e_{{\psi_f}(i)},\frac{1}{32}\varepsilon)$ and $g(t_i), h_g(t_i) \in B_E(e_{{\psi_g}(i)},\frac{1}{32}\varepsilon)$ for $i=1,\ldots,m$. This implies that $\norm{f-h_f}_Y\leq \frac{1}{16}\varepsilon$ and $\norm{g-h_g}_Y\leq \frac{1}{16}\varepsilon$. Consequently, $\norm{f-h_f}_\infty\leq \frac{1}{16}\varepsilon + \norm{f-h_f}_Y \leq \frac{1}{8}\varepsilon$ and $\norm{g-h_g}_\infty \leq \frac{1}{8}\varepsilon$. Likewise, by the definition of the sets $U_\lambda$, we have $\norm{(h_f-h_g)(x)}_E, \norm{(h_f-h_g)(y)}_E \in J_{\lambda_{f,g}}$. And so, $\abs[\big]{\norm{(h_f-h_g)(x)}_E - \norm{(h_f-h_g)(y)}_E}\leq \frac{1}{2}\varepsilon$. Note that
\[
 \norm{(f-g)(x)}_E \leq  \norm{(f-h_f)(x)}_E + \norm{(h_f-h_g)(x)}_E + \norm{(h_g-g)(x)}_E
\]
and
\[
 \norm{(h_f-h_g)(y)}_E \leq \norm{(h_f-f)(y)}_E + \norm{(f-g)(y)}_E + \norm{(g-h_g)(y)}_E.
\]
Thus,
\begin{align*}
& \norm{(f-g)(x)}_E - \norm{(f-g)(y)}_E\\
& \quad \leq  \norm{(f-h_f)(x)}_E + \norm{(h_f-h_g)(x)}_E + \norm{(h_g-g)(x)}_E\\
&\qquad + \norm{(h_f-f)(y)}_E  - \norm{(h_f-h_g)(y)}_E + \norm{(g-h_g)(y)}_E\\
& \quad \leq 2\norm{f-h_f}_\infty + 2\norm{g-h_g}_\infty + \norm{(h_f-h_g)(x)}_E - \norm{(h_f-h_g)(y)}_E\\
& \quad \leq 2\cdot \tfrac{1}{8}\varepsilon + 2\cdot \tfrac{1}{8}\varepsilon + \tfrac{1}{2}\varepsilon = \varepsilon.
\end{align*}
Interchanging the roles of $x,y$, we get $\norm{(f-g)(y)}_E - \norm{(f-g)(x)}_E \leq\varepsilon$. This shows that the set $A-A$ satisfies the condition (B).
\end{proof}

\subsection{Discussion of the condition (B)}\label{sec:discussion_of_B}
\label{sec:discussion}

In this section we are going to study the condition (B) little bit more. There are some questions that can (and should) be raised. One of the most natural ones is whether in the statement of Theorem~\ref{thm:compactness_BXE} the phrase ``the set $A-A$ satisfies the condition (B)'' can be replaced by ``the set $A$ satisfies the condition (B)''. It may come as a surprise that the answer is negative. Moreover, it cannot be done in any infinite-dimensional normed space $E$.

\begin{proposition}\label{prop:B:zero:one}
Let $X$ be the set consisting of all zero-one sequences with only finitely many non-zero terms. Moreover, let $E$ be an infinite-dimensional normed space. Then, there exists a non-empty set $A\subseteq B(X,E)$ with the following properties\textup:
\begin{enumerate}[label=\textup{(\alph*)}]
 \item $A$ satisfies the condition \textup{(B)},
 \item $A-A$ does not satisfy the condition \textup{(B)},
 \item for each $x \in X$ the section $A(x)$ is compact,
 \item $A(X)$ is not precompact,
 \item $A$ is not precompact.
\end{enumerate}
\end{proposition}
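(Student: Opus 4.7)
My plan is to construct $A$ explicitly as a countable family $\{f_n : n \in \mathbb{N}\}$ of functions of constant unit norm; this will make (a) trivial (a single-set cover suffices since all norm functions are identically $1$), so the content of the proposition lies in arranging the \emph{directions} of the $f_n(x)$ cleverly enough to force (b)--(e).

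The building blocks will be: (i) a sequence of unit vectors $(e_n)_{n \in \mathbb{N}}$ in $E$ with $\|e_n - e_m\|_E \geq 1/2$ for $n \neq m$, obtained from Riesz's lemma using infinite-dimensionality; and (ii) the function $h \colon X \to \mathbb{N}_0$ defined by $h(x) := \max\{k : x_k = 1\}$, with $h(0) := 0$. The crucial combinatorial feature of $h$, which I intend to use repeatedly, is that each sublevel set $\{x : h(x) \leq K\}$ is finite (it has $2^K$ elements), so $h$ is unbounded on every infinite subset of $X$.

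With these in hand, I will define $f_n(x) := e_n$ when $n \leq h(x)$ and $f_n(x) := e_1$ otherwise, and set $A := \{f_n : n \in \mathbb{N}\}$. Since $\|f_n(x)\|_E = 1$ for every $n, x$, property (a) is witnessed by the trivial one-set cover. The easy conclusions follow by direct inspection: the section $A(x) = \{e_1, \ldots, e_{\max(1,h(x))}\}$ is finite, giving (c); $A(X) \supseteq \{e_n : n \in \mathbb{N}\}$ is $1/2$-separated and hence not precompact, giving (d); and for distinct $n, m$, evaluating at any $x$ with $h(x) \geq \max(n, m)$ yields $\|f_n - f_m\|_\infty \geq \|e_n - e_m\|_E \geq 1/2$, so $(f_n)$ has no Cauchy subsequence, giving (e).

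The main obstacle, and the only step where the combinatorics of $h$ is used essentially, is (b). Given a purported finite cover $U_1, \ldots, U_N$ of $X$, at least one $U_i$ must be infinite, and on it $h$ is unbounded by the level-set property above; hence I can choose $x, y \in U_i$ with $h(y) \geq h(x) + 2$. Taking $n := h(x) + 1$ and $m := h(x) + 2$ (distinct integers), the construction forces $f_n(x) = f_m(x) = e_1$ while $f_n(y) = e_n$ and $f_m(y) = e_m$; thus $\|(f_n - f_m)(x)\|_E = 0$ whereas $\|(f_n - f_m)(y)\|_E \geq 1/2$. The oscillation of $x \mapsto \|(f_n - f_m)(x)\|_E$ on $U_i$ is therefore at least $1/2$, so $A - A$ violates condition (B) for every $\varepsilon < 1/2$.
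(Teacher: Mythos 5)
Your construction is correct and follows essentially the same strategy as the paper's proof: unit vectors separated by $1/2$ via Riesz's lemma, two-valued functions $f_n$ of constant norm $1$ (making (a) trivial), and a pigeonhole argument on finite-support sequences to defeat condition (B) for $A-A$. The only differences are cosmetic — the paper keys $f_n(x)$ to the $n$-th coordinate of $x$ rather than to $\max$ of the support, and derives (e) from its main compactness theorem where you argue it directly — so no further comparison is needed.
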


\begin{proof}
By the well-known Riesz lemma (see e.g.~\cite{kreyszig}*{Theorem~2.5-4}), there exists a sequence $(e_n)_{n \geq 0}$ of unit vectors in $E$ such that $\norm{e_i-e_j}_E \geq \frac{1}{2}$ for all distinct $i,j\in \N\cup\{0\}$. For each $n \in \mathbb N$ let us define a bounded mapping $f_n \colon X \to E$ by
\begin{equation}
f_n(x) = \begin{cases}
  e_0 & \text{if $\xi_n = 0$,}\\
	e_n & \text{if $\xi_n = 1$;}
\end{cases}
\end{equation}
here $x:=(\xi_n)_{n \in \mathbb N}$. Let also $A:=\dset{f_n}{n\in \mathbb N}$. 

Note that the set $A$ satisfies the condition (B) with the cover of $X$ consisting of a single set $U_1:=X$, because for every $x,y\in X$ and $n \in \mathbb N$ we have $\norm{f_n(x)}_E = \norm{f_n(y)}_E = 1$. However, the algebraic difference $A-A$ does not satisfy condition (B). To see this let $U_1,\ldots,U_N$ be any finite cover of $X$ and let $k$ be a positive integer such that $2^{k-1} > N$. Since there are exactly $2^{k-1}$ different zero-one sequences of length $k-1$, there exists at least one set $U_i$ which contains at least two distinct elements $x=(\xi_n)_{n \in \mathbb N}$ and $y=(\eta_n)_{n \in \mathbb N}$ such that $\xi_n = \eta_n = 0$ for $n\geq k$. As $x\neq y$, an index $l<k$ exists such that $\xi_l\neq \eta_l$. Of course, we may assume that $\xi_l=1$ and $\eta_l=0$. So, $f_k(x) = f_k(y) = f_l(y)= e_0$ and $f_l(x)=e_l$. Therefore, we have
\[
\abs[\big]{\norm{(f_k-f_l)(x)}_E - \norm{(f_k-f_l)(y)}_E} = \norm{e_0-e_l}_E \geq \tfrac{1}{2}.
\]
This shows that the condition (B) for the set $A-A$ is not satisfied.

Now, let us fix $x=(\xi_n)_{n \in \mathbb N} \in X$ and let $m \in \mathbb N$ be such that $\xi_n=0$ for $n \geq m$. Then, $A(x)\subseteq \{e_0,e_1,\ldots,e_{m-1}\}$, meaning that the section $A(x)$ is compact. On the other hand, $A(X)=\dset{e_n}{n \in \mathbb N\cup\{0\}}$. This implies that $A(X)$ is not precompact, as the sequence $(e_n)_{n \in \mathbb N}$ does not contain a Cauchy subsequence. 

The fact that the set $A$ is not precompact follows from Theorem~\ref{thm:compactness_BXE}.
\end{proof}

When dealing with compactness criteria for real-valued functions, a one-dimensional version of the following condition often can be encountered:
\begin{enumerate}
 \item[(DS)] for every $\varepsilon>0$ there exist finitely many non-empty subsets $U_1,\ldots,U_N$ of $X$ whose union is $X$ such that for every $i \in \{1,\ldots,N\}$ and every pair $x, y \in U_i$ for all $f \in A$ we have $\norm{f(x) - f(y)}_E \leq \varepsilon$ 
\end{enumerate}
(cf.~\cite{alexiewicz1969analiza}*{pp. 370–371} or~\cite{DS}*{Section IV.4}). Clearly, it is stronger than the condition (B). Thus, another natural question concerning Theorem~\ref{thm:compactness_BXE} arises. Does this result still hold, if in its statement we replace the condition (B) with (DS)? Or, maybe we should impose the condition (DS) on the set $A$ rather than on its algebraic difference $A-A$. The answers to both these questions are negative, as shown by the following example.

\begin{example}\label{ex:B:comp}
Let $X$ be the unit sphere of an infinite-dimensional normed space $E$. Consider the mappings $f,g \colon X \to E$ given by $f(x)=x$ and $g(x)=2x$. Moreover, let $A:=\{f,g\}$. It is easy to see that both the sets $A$ and $A-A$ satisfy the condition (B) with $U_1:=X$, as $\norm{f(x)}_E- \norm{f(y)}_E=0$, $\norm{g(x)}_E-\norm{g(y)}_E=0$ and $\norm{(f-g)(x)}_E-\norm{(f-g)(y)}_E=0$ for any $x,y \in X$. However, if either $A$ or $A-A$ satisfied the condition (DS), then for every $\varepsilon>0$ it would be possible to cover the unit sphere of an infinite-dimensional normed space with finitely many sets of diameter not greater than $\varepsilon$, which is absurd. Thus, neither the set $A$ nor the algebraic difference $A-A$ satisfy the condition (DS). Note also that for each $x \in X$ the section $A(x)$ is precompact (it is even compact), but the set $A(X)$ is not.
\end{example}

Let us stop for a moment and look at the above example once more. The construction of the set $A$ with appropriate properties was possible, because the normed space $E$ was infinite-dimensional. In consequence, the set $A(X)$ was not precompact. However, if we assume that the subset $A(X)$ of $E$ is precompact, the situation changes significantly. 

\begin{proposition}\label{prop:DS_equivalent_B}
For a non-empty subset $A$ of $B(X,E)$ the following conditions are equivalent\textup:
\begin{enumerate}[label=\textup{(\roman*)}]
 \item\label{it:DS_B_i}  $A$ satisfies the condition \textup{(DS)} and for every $x \in X$ the section $A(x)$ is precompact,
 \item\label{it:DS_B_ii} $A-A$  satisfies the condition \textup{(B)} and the set $A(X)$ is precompact.
\end{enumerate}
\end{proposition}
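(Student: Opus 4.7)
The direction $(i)\Rightarrow(ii)$ is straightforward. If $A$ satisfies (DS) with parameter $\varepsilon/2$ on a cover $U_1,\ldots,U_N$, then for any $f,g\in A$ and $x,y\in U_i$ the reverse triangle inequality gives $\abs{\norm{(f-g)(x)}_E-\norm{(f-g)(y)}_E}\leq\norm{(f-g)(x)-(f-g)(y)}_E\leq\varepsilon$, so $A-A$ satisfies (B) on the same cover. For precompactness of $A(X)$, pick points $t_i\in U_i$; the finite union $\bigcup_{i=1}^{N}A(t_i)$ is precompact, and any finite $\varepsilon/2$-net of it is an $\varepsilon$-net for $A(X)$ by (DS) applied within each $U_i$.

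The converse is where the work lies. Precompactness of each section $A(x)$ is immediate from $A(x)\subseteq A(X)$, so the task reduces to deriving (DS) for $A$. I would mimic the cover construction from Lemma~\ref{lem:7}. Fix $\varepsilon>0$, set $\delta:=\varepsilon/7$, use (B) for $A-A$ with parameter $\delta$ to obtain a cover $V_1,\ldots,V_N$ of $X$, and fix representatives $t_i\in V_i$. Using precompactness of $A(X)$, choose points $e_1,\ldots,e_M\in E$ with $A(X)\subseteq\bigcup_{j=1}^{M}B_E(e_j,\delta)$. Assign to each $f\in A$ an index function $\psi_f\colon\{1,\ldots,N\}\to\{1,\ldots,M\}$ satisfying $f(t_i)\in B_E(e_{\psi_f(i)},\delta)$, group $A$ into the nonempty classes $A_\psi$, and in each class fix a representative $h_\psi$; denote the finitely many representatives by $h_1,\ldots,h_K$.

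The key observation is that every class $A_\psi$ is uniformly close to its representative in sup-norm. Indeed, for $f\in A_\psi$ and $x\in X$, selecting any $V_i\ni x$ and applying (B) to the pair $f-h_\psi\in A-A$ at $(x,t_i)$, together with $f(t_i),h_\psi(t_i)\in B_E(e_{\psi(i)},\delta)$, yields $\norm{(f-h_\psi)(x)}_E\leq\norm{(f-h_\psi)(t_i)}_E+\delta\leq 3\delta$, hence $\norm{f-h_\psi}_\infty\leq 3\delta$. To simultaneously control the oscillation of the finitely many $h_k$, consider $H\colon X\to E^{K}$ given by $H(x):=(h_1(x),\ldots,h_K(x))$. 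Since $H(X)\subseteq A(X)^{K}$ and $A(X)$ is precompact, $H(X)$ is precompact in $E^{K}$ under the max-norm, so it admits a finite cover by sets $W_1,\ldots,W_L$ of diameter at most $\delta$. The refined cover $\set{V_i\cap H^{-1}(W_\ell)}_{i,\ell}$ then satisfies (DS): for any $f\in A$ and $x,y$ in a common member of the refinement,
\[
\norm{f(x)-f(y)}_E\leq 2\norm{f-h_{\psi_f}}_\infty+\norm{h_{\psi_f}(x)-h_{\psi_f}(y)}_E\leq 6\delta+\delta=7\delta=\varepsilon.
\]

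\textbf{Main obstacle.} The difficulty is that (B) controls only differences $f-g$, whereas (DS) demands control of individual functions. Bridging this gap needs two successive refinements of the cover: first, grouping $A$ into classes that lie within $3\delta$ of a representative in sup-norm (built from precompactness of $A(X)$ and the (B)-cover); and second, controlling the joint oscillation of the finitely many representatives via the auxiliary map $H$, which is itself precompact thanks to the precompactness of $A(X)$. The construction is parallel in spirit to the one used in Lemma~\ref{lem:7}, and the precompactness of $A(X)$ is exactly what makes the second refinement possible.
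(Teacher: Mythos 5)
Your proposal is correct and follows essentially the same route as the paper: the forward direction via the reverse triangle inequality and a net built from the sections $A(t_i)$, and the converse via grouping $A$ into finitely many classes determined by a finite net for $A(X)$ evaluated at representative points of the (B)-cover, then refining the cover so as to control the joint oscillation of the finitely many class representatives. The only differences are cosmetic (your global bound $\norm{f-h_{\psi_f}}_\infty\leq 3\delta$ versus the paper's pointwise estimates within each $W_i$, and covering $H(X)$ by small-diameter sets rather than pulling back products of balls around the net points), so no further comparison is needed.
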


\begin{proof}
We begin with the proof of the implication~$\ref{it:DS_B_i}\Rightarrow\ref{it:DS_B_ii}$. Note that if the set $A$ satisfies the condition (DS), then so does $A-A$. And, moreover, for any $x,y\in X$ and $f,g \in A$ we have $\abs[\big]{\norm{(f-g)(x)}_E-\norm{(f-g)(y)}_E}\leq \norm{(f-g)(x)-(f-g)(y)}_E$. Now, we need only to show that the set $A(X)$ is precompact. Fix $\varepsilon>0$. By the condition (DS) the set $X$ can be covered with a finite collection of non-empty sets $U_1,\ldots,U_n$ such that $\sup_{f \in A}\norm{f(\xi)-f(\eta)}_E \leq \frac{1}{2}\varepsilon$ for every $i\in\{1,\ldots,n\}$ and every pair $\xi,\eta \in U_i$. In each set $U_i$ let us fix a point $t_i$. Then, for any $x \in U_i$ and $f \in A$ we have $f(x)=f(t_i)+f(x)-f(t_i) \in \{f(t_i)\}+\ball_E(0,\frac{1}{2}\varepsilon)$. Thus, $A(U_i):=\bigcup_{x\in U_i}A(x) \subseteq A(t_i) + \ball_E(0,\frac{1}{2}\varepsilon)$. But $A(t_i)$ is totally bounded by assumption, and so it can be covered with a finite family of balls $\ball_E(y_1^i,\frac{1}{2}\varepsilon)$, \ldots, $\ball_E(y_{m_i}^i,\frac{1}{2}\varepsilon)$, where $y_j^i \in A(t_i)$. This means that $A(X)\subseteq \bigcup_{i=1}^n A(U_i) \subseteq \bigcup_{i=1}^n \bigcup_{j=1}^{m_i} \ball_E(y_j^i,\varepsilon)$, and shows that $A(X)$ is totally bounded/precompact.

Now, we prove the implication~$\ref{it:DS_B_ii}\Rightarrow\ref{it:DS_B_i}$. The reasoning is similar to the one we used to establish Lemma~\ref{lem:7}. Let us assume that the algebraic difference $A-A$ satisfies the condition (B). Then, given $\varepsilon>0$ there exists a finite cover $W_1,\ldots,W_n$ of $X$ such that for every $i \in \{1,\ldots,n\}$ and all $x,y \in W_i$ and $f,g \in A$ we have $\abs[\big]{\norm{(f-g)(x)}_E - \norm{(f-g)(y)}_E} \leq \frac{1}{8}\varepsilon$. For each $i \in \{1,\ldots,n\}$ let us also fix $w_i\in W_i$. Furthermore, let $e_1,\ldots,e_m \in E$ be a finite $\frac{1}{8}\varepsilon$-net for $A(X)$.

By $\Psi$ denote the set of all functions $\psi\colon \{1,\ldots,n\}\to \{1,\ldots,m\}$. For each $\psi \in \Psi$ let $A_\psi: = \dset{f\in A}{\text{$f(w_i) \in \ball_E(e_{\psi(i)},\tfrac{1}{8}\varepsilon)$ for every $i=1,\ldots,n$}}$.
 Moreover, by $\Omega$ denote the set of all those $\psi\in\Psi$ for which $A_\psi\neq\emptyset$. 
 For each $\psi\in\Omega$ let us chose a function $g_\psi\in A_\psi$. Thus, we obtain a finite family $g_1,\ldots,g_k$ of functions, where $k$ is the cardinality of $\Omega$.  
  It is obvious that for each $f\in A$ there exist $\psi_f\in \Omega$ and a corresponding function $g_l \in \{g_1,\ldots,g_k\}$ such that $f,g_l\in A_{\psi_f}$.
 
Define the mapping $G \colon X \to E^k$ by the formula $G(x)=(g_1(x),\ldots,g_k(x))$, and let $V_\lambda:=G^{-1}(\prod_{i=1}^k \ball_E(e_{\lambda_i},\frac{1}{8}\varepsilon))$, where $\lambda:=(\lambda_1,\ldots,\lambda_k) \in \{1,\ldots,m\}^k$. Set $U_{i,\lambda}:=W_i \cap V_\lambda$. By $I$ let us denote the set of those pairs $(i,\lambda)$, where $i \in \{1,\ldots,n\}$ and $\lambda \in \{1,\ldots,m\}^k$, for which the sets $U_{i,\lambda}$ are non-empty. Clearly, the finite family $\{U_{i,\lambda}\}_{(i,\lambda) \in I}$ is a covering of $X$. 

Let us now fix an index $(i,\lambda) \in I$, take any points $x,y\in U_{i,\lambda}$ and any function $f \in A$, together with the corresponding function $g_l \in \{g_1,\ldots,g_k\}$, such that $f,g_l\in A_{\psi_f}$. Then, by condition (B) and the fact that $U_{i,\lambda}\subseteq W_i$, we can see that 
\[
\norm{(f-g_l)(x)}_E\leq \norm{(f-g_l)(w_i)}_E + \tfrac{1}{8}\varepsilon.
\]
Similarly,
\[
\norm{(f-g_l)(y)}_E\leq \norm{(f-g_l)(w_i)}_E + \tfrac{1}{8}\varepsilon.
\]
As $f,g_l \in A_{\psi_f}$, we have $\norm{(f-g_l)(w_i)}_E \leq \frac{1}{4}\varepsilon$. Using the fact that the set $U_{i,\lambda}$ is included in $V_\lambda$, we obtain $\norm{g_l(x)-g_l(y)}_E \leq \frac{1}{4}\varepsilon$. Therefore, $\norm{f(x)-f(y)}_E \leq \norm{(f-g_l)(x)}_E + \norm{g_l(x)-g_l(y)}_E + \norm{(f-g_l)(y)}_E \leq \tfrac{3}{8}\varepsilon + \tfrac{1}{4}\varepsilon + \tfrac{3}{8}\varepsilon = \varepsilon$.

Since a subset of a precompact set is also precompact, and $A(x)\subseteq A(X)$ for every $x \in X$, this ends the proof.
\end{proof}

As bounded sets in finite-dimensional Banach spaces are precompact, from Theorem~\ref{thm:compactness_BXE} and Proposition~\ref{prop:DS_equivalent_B} (cf. also Remark~\ref{rem:relatively_compact}) we immediately get the following result. For $n=1$ and $\mathbb K:=\mathbb R$ it can be found (in a slightly different but equivalent form) on page~260 in~\cite{DS}. 

\begin{theorem}\label{thm:DS}
Let $n \in \mathbb N$ and $\mathbb K \in \{\mathbb R, \mathbb C\}$. A non-empty set $A \subseteq B(X,\mathbb K^n)$ is relatively compact if and only if it is bounded and satisfies the condition \textup{(DS)}.
\end{theorem}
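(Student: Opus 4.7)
The plan is to derive Theorem~\ref{thm:DS} as an essentially immediate corollary of the machinery already in place: Theorem~\ref{thm:compactness_BXE}, Proposition~\ref{prop:DS_equivalent_B}, and Remark~\ref{rem:relatively_compact}. The crucial observation that makes everything click is that $\mathbb K^n$ is a finite-dimensional Banach space, so two things are true simultaneously: first, $B(X,\mathbb K^n)$ is complete, which allows us to identify relative compactness with precompactness via Remark~\ref{rem:relatively_compact}; second, every bounded subset of $\mathbb K^n$ is precompact, which closes the only remaining gap between the hypothesis ``$A$ bounded'' and the hypothesis ``$A(X)$ precompact'' that appears in Proposition~\ref{prop:DS_equivalent_B}.

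For the forward direction, I will assume $A$ is relatively compact. Boundedness of $A$ is automatic. Precompactness of $A$, combined with Theorem~\ref{thm:compactness_BXE}, yields that $A-A$ satisfies the condition (B) and that each section $A(x)$ is precompact. Because $A$ is bounded in $B(X,\mathbb K^n)$, the set $A(X)$ is bounded in $\mathbb K^n$, hence precompact by finite-dimensionality. This places us in condition~\ref{it:DS_B_ii} of Proposition~\ref{prop:DS_equivalent_B}, so the equivalent condition~\ref{it:DS_B_i} also holds, giving~(DS).

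For the reverse direction, assume $A$ is bounded and satisfies (DS). Again by finite-dimensionality, every section $A(x)\subseteq \mathbb K^n$ is bounded and therefore precompact, so condition~\ref{it:DS_B_i} of Proposition~\ref{prop:DS_equivalent_B} holds. The equivalent condition~\ref{it:DS_B_ii} then guarantees that $A-A$ satisfies the condition (B) and that $A(X)$ is precompact; in particular each section $A(x)$ inherits precompactness. Theorem~\ref{thm:compactness_BXE} then yields precompactness of $A$, and completeness of $B(X,\mathbb K^n)$ upgrades this to relative compactness via Remark~\ref{rem:relatively_compact}.

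There is no serious obstacle here; the theorem is really a packaging statement that binds three earlier results together, with the finite-dimensional hypothesis performing the single nontrivial step of ensuring bounded sets in $\mathbb K^n$ are precompact. The only minor point requiring care is to route both implications explicitly through the auxiliary precompactness of $A(X)$, rather than tacitly sliding between the union $A(X)$ and the individual sections $A(x)$.
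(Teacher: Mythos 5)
Your proposal is correct and follows exactly the route the paper takes: the paper states Theorem~\ref{thm:DS} as an immediate consequence of Theorem~\ref{thm:compactness_BXE} and Proposition~\ref{prop:DS_equivalent_B} (together with Remark~\ref{rem:relatively_compact}), citing precisely the fact that bounded sets in finite-dimensional Banach spaces are precompact. Your write-up merely makes the two implications explicit, which is a faithful unpacking of the paper's one-line derivation.
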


\section{Compactness in $C(X,E)$}
\label{sec:compactness_in_CXE}

In this section, as corollaries to our compactness results established previously, we will state compactness criteria for the space of bounded and continuous maps. The most interesting is the first one, as it is probably not (widely) known. The other two (in the exact or similar version) are already known in the literature (see, e.g.,~\cite{alexiewicz1969analiza}*{pp.~370--371}, \cite{BBK}*{Corollary~1.2.9}, \cite{DS}*{Section~IV.6} or~\cite{kreyszig}*{Theorem~8.7-4}). We have decided to state them, because of two reasons. Firstly, we will need them when dealing with compactness in Lipschitz spaces. And, secondly, because, in contrast to the approach presented in~\cite{DS}, our method allows to say more about the regularity of the sets $U_1,\ldots,U_n$ appearing in the conditions (B) and (DS) (see Remarks~\ref{rem:open_closed} and~\ref{rem:open_closed2}).  

As an application of Theorem~\ref{thm:compactness_BXE} to a closed subspace of $B(X,E)$ consisting of continuous mappings we get the following result.

\begin{theorem}\label{thm:compactness_in_CXE_1}
Let $X$ be a topological space. Then, a non-empty subset $A$ of $C(X,E)$ is precompact if and only if the algebraic difference $A-A$ satisfies the condition \textup{(B)} and for every $x \in X$ the sections $A(x)$ are precompact.  
\end{theorem}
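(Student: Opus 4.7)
The plan is to deduce this statement directly from Theorem~\ref{thm:compactness_BXE} by exploiting the fact that $C(X,E)$ carries the metric inherited from $B(X,E)$. Since precompactness (equivalently, total boundedness) of a set depends only on the restriction of the metric to that set, a subset $A$ contained in both $C(X,E)$ and $B(X,E)$ is precompact as a subset of $C(X,E)$ if and only if it is precompact as a subset of $B(X,E)$. Hence the criterion for $B(X,E)$ transfers verbatim.

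More precisely, for the necessity, assume $A \subseteq C(X,E)$ is precompact. Then $A$ is precompact as a subset of $B(X,E)$, so Theorem~\ref{thm:compactness_BXE} immediately yields that $A-A$ satisfies the condition (B) and that all sections $A(x)$, $x\in X$, are precompact in $E$. Conversely, if $A-A$ satisfies (B) and every section $A(x)$ is precompact, Theorem~\ref{thm:compactness_BXE} gives that $A$ is precompact in $B(X,E)$, and by the observation above this is the same as being precompact in $C(X,E)$.

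There is no real obstacle here, since the continuity of the elements of $A$ plays no role whatsoever in the argument: once $A$ is known to lie in $C(X,E)$, the statement is a formal corollary. The interesting content already sits in Theorem~\ref{thm:compactness_BXE}. The only minor point worth stating explicitly in the proof is the metric-subspace remark above, which is what lets us pass freely between ``precompact in $B(X,E)$'' and ``precompact in $C(X,E)$''; without it, one might worry that the completion taken inside $C(X,E)$ could differ from that taken inside $B(X,E)$, but the two coincide because total boundedness is intrinsic to $A$ and the induced metric.
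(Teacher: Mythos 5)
Your proposal is correct and matches the paper's treatment: the paper also obtains this theorem as an immediate application of Theorem~\ref{thm:compactness_BXE} to the subspace $C(X,E)$ of $B(X,E)$, relying on exactly the observation you make, namely that precompactness is intrinsic to the induced metric on $A$ and so transfers verbatim. Your explicit remark about why the ambient space does not matter is a welcome clarification of a point the paper leaves implicit.
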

 
\begin{remark}\label{rem:open_closed}
A closer look at the proof of Theorem~\ref{thm:compactness_BXE} allows us to say more about the regularity of the sets $U_1,\ldots,U_N$ appearing in the condition (B) in Theorem~\ref{thm:compactness_in_CXE_1}. If $A\subseteq C(X,E)$, then the function $G$ defined in the proof of Lemma~\ref{lem:7} is continuous. Thus, taking the open/closed covering $J_1,\ldots,J_k$ of the set $[0,2M]^{N^2}$, we may conclude that all the set $U_1,\ldots,U_N$ are open (closed) in $X$.
\end{remark}

We already know that in some situations both conditions (B) and (DS) can be used to characterize precompact subsets of $B(X,E)$. Now, we prove a result similar to Proposition~\ref{prop:DS_equivalent_B} in the case when $A$ is a non-empty subset of the space $C(X,E)$.   

\begin{corollary}\label{rem:DS_B_3}
Let $X$ be a compact Hausdorff topological space. Moreover, assume that $A$ is a non-empty subset of $C(X,E)$ such that each section $A(x)$, where $x \in X$, is precompact. Then, $A$ satisfies the condition \textup{(DS)} if and only if  $A-A$  satisfies the condition \textup{(B)}.
\end{corollary}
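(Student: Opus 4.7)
The plan is to deduce this corollary directly from Proposition~\ref{prop:DS_equivalent_B}, once we observe that in this more restrictive setting the precompactness of $A(X)$ comes for free. The forward direction ``$A$ satisfies (DS) $\Rightarrow$ $A-A$ satisfies (B)'' is already contained in the implication $\ref{it:DS_B_i}\Rightarrow\ref{it:DS_B_ii}$ of that proposition and requires no additional work; in fact, (DS) for $A$ passes to $A-A$ (up to a factor of $2$) and implies (B) for $A-A$ by the reverse triangle inequality. So essentially everything reduces to the reverse implication, and the missing ingredient that must be supplied in order to invoke $\ref{it:DS_B_ii}\Rightarrow\ref{it:DS_B_i}$ is the precompactness of $A(X)$.

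To obtain that, I would first apply Theorem~\ref{thm:compactness_in_CXE_1}: the hypothesis that $A-A$ satisfies~(B), combined with precompactness of every section $A(x)$ (which forces precompactness of the sections $(A-A)(x) \subseteq A(x)-A(x)$ as well), yields that $A$ itself is precompact in $C(X,E)$. Next, I would use compactness of $X$ to promote this to precompactness of $A(X)$ in $E$ by a standard $\tfrac{\varepsilon}{2}$-argument. Given $\varepsilon>0$, choose a finite $\tfrac{\varepsilon}{2}$-net $f_1,\ldots,f_n$ for $A$ in $(C(X,E),\norm{\cdot}_\infty)$; since each $f_i$ is continuous and $X$ is compact, each image $f_i(X)$ is compact in $E$, so the finite union $\bigcup_{i=1}^n f_i(X)$ admits a finite $\tfrac{\varepsilon}{2}$-net $y_1,\ldots,y_m$; any value $f(x)$ with $f\in A$ lies within $\tfrac{\varepsilon}{2}$ of some $f_i(x)$, hence within $\varepsilon$ of some $y_j$. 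Thus $A(X)$ is totally bounded and therefore precompact in $E$. With $A(X)$ precompact (and hence every section $A(x)\subseteq A(X)$ precompact, which is needed to legitimately apply the proposition), Proposition~\ref{prop:DS_equivalent_B}~$\ref{it:DS_B_ii}\Rightarrow\ref{it:DS_B_i}$ delivers condition~(DS) for $A$.

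There is no serious obstacle here: the corollary is essentially a two-line glue between Theorem~\ref{thm:compactness_in_CXE_1} and Proposition~\ref{prop:DS_equivalent_B}, held together by the elementary fact that continuous images of compact spaces are compact. The Hausdorff assumption on $X$ is incidental for this particular deduction; what genuinely matters is that $X$ be compact, so that the pointwise precompactness of sections can be upgraded -- via the uniform approximation provided by precompactness of $A$ in $C(X,E)$ -- to the uniform statement that $A(X)$ is precompact in $E$.
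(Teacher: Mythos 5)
Your proposal is correct and follows essentially the same route as the paper: the forward direction is read off from Proposition~\ref{prop:DS_equivalent_B}, and for the converse you invoke Theorem~\ref{thm:compactness_in_CXE_1} to get precompactness of $A$, then use a finite net in $C(X,E)$ together with compactness of the images $f_i(X)$ to show $A(X)$ is totally bounded, exactly as in the paper's proof. Your side remark that the Hausdorff hypothesis is not actually needed for this deduction is also accurate.
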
 

\begin{proof}
In view of Proposition~\ref{prop:DS_equivalent_B} it suffices to show that if the algebraic difference $A-A$ satisfies the condition (B), then the set $A(X)$ is precompact. So let us fix $\varepsilon>0$. Then, by Theorem~\ref{thm:compactness_in_CXE_1} the set $A$ is precompact, and hence it has a finite $\varepsilon$-net $g_1,\ldots,g_n \in A$. As the functions $g_i$ are continuous and the topological space $X$ is compact, the images $g_i(X)$ as well as their union $\bigcup_{i=1}^n g_i(X)$ are compact. For every $f \in A$ and $x \in X$ we thus have $f(x)=f(x)-g_j(x) + g_j(x) \in \ball_E(0,\varepsilon)+g_j(X)$, where the function $g_j$ is an element of the $\varepsilon$-net $g_1,\ldots,g_n$ chosen so that $\norm{f-g_j}_\infty \leq \varepsilon$. Therefore, $A(X)\subseteq \ball_E(0,\varepsilon)+\bigcup_{i=1}^n g_i(X)$. This proves that the set $A(X)$ is precompact (cf. the first part of the proof of Proposition~\ref{prop:DS_equivalent_B}).
\end{proof}

\begin{remark}
It is worth underlining that using a similar approach to the one we used in the proof of Proposition~\ref{prop:DS_equivalent_B}, we can avoid applying Theorem~\ref{thm:compactness_in_CXE_1} in the proof of Corollary~\ref{rem:DS_B_3}. We decided, however, to proceed along this less elegant route to simplify the reasoning and not utilizing the same arguments over and over again.     
\end{remark}

Combining Proposition~\ref{prop:DS_equivalent_B}, Theorem~\ref{thm:compactness_in_CXE_1} and Corollary~\ref{rem:DS_B_3}, yields yet another one compactness criterion in $C(X,E)$. 

\begin{theorem}\label{thm:compactness_in_CXE_2}
Let $n \in \mathbb N$ and $\mathbb K \in \{\mathbb R, \mathbb C\}$. Moreover, assume that either
\begin{enumerate}[label=\textup{(\alph*)}]
 \item\label{it:CXE_2_i} $X$ is an arbitrary topological space and $E=\mathbb K^n$, or
 \item $X$ is a compact Hausdorff topological space and $E$ is an arbitrary normed space.
\end{enumerate}
Then, a non-empty subset $A$ of $C(X,E)$ is precompact if and only if it satisfies the condition \textup{(DS)} and the sections $A(x)$ for $x \in X$ are precompact.
\end{theorem}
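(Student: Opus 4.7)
The plan is to deduce the theorem by assembling the three previously established results: Theorem~\ref{thm:compactness_in_CXE_1}, Proposition~\ref{prop:DS_equivalent_B}, and Corollary~\ref{rem:DS_B_3}. The sufficiency direction will be handled uniformly in both cases~(a) and~(b), while the necessity direction will split according to whether we exploit finite-dimensionality of $E$ or compactness of $X$.

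For the implication ``(DS) plus precompact sections $\Rightarrow$ $A$ precompact'', I would invoke the implication $\ref{it:DS_B_i}\Rightarrow\ref{it:DS_B_ii}$ of Proposition~\ref{prop:DS_equivalent_B} to conclude that $A-A$ satisfies the condition~(B) and that $A(X)$ is precompact. Since each section $A(x)$ lies inside the precompact set $A(X)$ and is therefore precompact, Theorem~\ref{thm:compactness_in_CXE_1} immediately yields precompactness of $A$ itself. This step does not rely on the specific structural assumptions imposed on $X$ or $E$.

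For the converse, Theorem~\ref{thm:compactness_in_CXE_1} applied to the precompact set $A$ delivers at once that $A-A$ satisfies~(B) and that every section $A(x)$ is precompact; it only remains to upgrade~(B) on $A-A$ to~(DS) on $A$. In case~(b) this is precisely the content of Corollary~\ref{rem:DS_B_3}, whose hypotheses ($X$ compact Hausdorff, sections precompact, condition~(B) on $A-A$) are all at our disposal. In case~(a) the precompactness of $A$ in $C(X,\mathbb K^n)$ forces $A$ to be bounded, so $A(X)$ is bounded in the finite-dimensional space $\mathbb K^n$ and therefore precompact; invoking the implication $\ref{it:DS_B_ii}\Rightarrow\ref{it:DS_B_i}$ of Proposition~\ref{prop:DS_equivalent_B} now supplies~(DS).

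The substantive technical work has already been absorbed into the three earlier results, so I do not anticipate any genuine obstacle here. The only point that requires attention is the bookkeeping of matching each of the two alternative structural assumptions to the right ingredient: finite-dimensionality of $E$ provides the missing precompactness of $A(X)$ that lets Proposition~\ref{prop:DS_equivalent_B} bridge~(B) and~(DS) in case~(a), while compactness of $X$ plays the analogous role via Corollary~\ref{rem:DS_B_3} in case~(b).
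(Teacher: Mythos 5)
Your proposal is correct and follows exactly the route the paper intends: the paper gives no separate proof but simply states that the theorem follows by ``combining Proposition~\ref{prop:DS_equivalent_B}, Theorem~\ref{thm:compactness_in_CXE_1} and Corollary~\ref{rem:DS_B_3}'', which is precisely the assembly you carry out. Your bookkeeping of which hypothesis supplies the precompactness of $A(X)$ in each case (finite-dimensionality of $E$ in case~(a), compactness of $X$ via Corollary~\ref{rem:DS_B_3} in case~(b)) is exactly right.
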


\begin{remark}\label{rem:17}
Let $n \in \mathbb N$ and $\mathbb K \in \{\mathbb R, \mathbb C\}$. Note that when  $E=\mathbb K^n$, the set $A(x)$ for $x\in X$ is precompact if and only if it is bounded. Therefore, in this case, in Theorem~\ref{thm:compactness_in_CXE_2} instead of the phrase ``the sections $A(x)$ for $x \in X$ are precompact'' we can equivalently write ``the sections $A(x)$ for $x \in X$ are bounded'', or even ``$A$ is a bounded subset of $C(X,\mathbb K^n)$''. However, when the normed space $E$ is infinite-dimensional, the precompactness of the sections $A(x)$ cannot be replaced with their boundedness, or the boundedness of the whole set $A$. To see this take a separable infinite-dimensional Banach space $E$ and set $X:=\ball_{E^\ast}(0,1)$, where $E^\ast$ is the dual of $E$. From classical facts from functional analysis we know that $X$ is a metrizable compact Hausdorff topological space when endowed with the weak$^\ast$ topology (see~\cite{Conway}*{Theorem~V.3.1 and Theorem~V.5.1}). Given a point $y \in X$ consider the constant mapping $g_y \colon X \to (E^\ast,\norm{\cdot}_{E^\ast})$ given by $g_y(x)=y$ for $x \in X$. Also, let $A:=\dset{g_y}{y \in X}$. It is straightforward to check that the set $A$ satisfies the condition (DS) (with a family consisting of a single open set $U_1:=X$) and is bounded in $C(X,E^{\ast})$. However, $A(x)=\ball_{E^\ast}(0,1)$ for any $x \in X$. So, no section is precompact in the norm of $E^\ast$.
\end{remark}

\begin{remark}\label{rem:open_closed2}
A remark similar to Remark~\ref{rem:open_closed} is also true in the case of Theorem~\ref{thm:compactness_in_CXE_2}. In other words, for a set $A\subseteq C(X,E)$, in the condition (DS) we may require all the sets $U_1,\ldots,U_N$ to be either open or closed in $X$.
\end{remark}

As a corollary to Theorem~\ref{thm:compactness_in_CXE_2} we obtain the classical Arz\`ela--Ascoli compactness criterion. It uses the notion of an equicontinuous family of functions. Although, this notion is very well-known, just for completeness, let us recall it. A non-empty set $A \subseteq C(X,E)$ is \emph{equicontinuous}, if for every $\varepsilon>0$ there exists $\delta>0$ such that for every $x,y \in X$ with $d(x,y)\leq \delta$ and every $f \in A$ we have $\norm{f(x)-f(y)}_E \leq \varepsilon$.

\begin{theorem}\label{thm:compactness_in_CXE_3}
Let $(X,d)$ be a compact metric space. Then, a non-empty subset $A$ of $C(X,E)$ is precompact if and only if it is equicontinuous and for every $x \in X$ the sections $A(x)$ are precompact.
\end{theorem}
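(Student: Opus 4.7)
The plan is to deduce this as a direct corollary of Theorem~\ref{thm:compactness_in_CXE_2}\,\ref{it:CXE_2_i}'s sibling part (the case of compact Hausdorff $X$ and arbitrary normed $E$), which already states that precompactness in $C(X,E)$ is equivalent to the condition (DS) together with precompactness of all sections. Thus the whole job reduces to verifying, under the standing assumption that $(X,d)$ is a compact metric space, that the condition (DS) for $A$ is equivalent to equicontinuity of $A$.

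For the implication equicontinuity $\Rightarrow$ (DS), I would fix $\varepsilon>0$, pick the corresponding $\delta>0$ from equicontinuity, and exploit the fact that a compact metric space is totally bounded: choose a finite $\tfrac{\delta}{2}$-net $x_1,\ldots,x_N\in X$ and set $U_i:=\ball_X(x_i,\tfrac{\delta}{2})$. These closed balls cover $X$ and each has diameter at most $\delta$, so for any $x,y\in U_i$ and every $f\in A$ we get $\norm{f(x)-f(y)}_E\leq \varepsilon$. Combined with the (assumed) precompactness of every section $A(x)$, Theorem~\ref{thm:compactness_in_CXE_2} then yields the precompactness of $A$.

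For the converse, precompactness of $A$ in $C(X,E)$ trivially gives precompactness of every section, since the evaluation map $\mathrm{ev}_x\colon C(X,E)\to E$ is $1$-Lipschitz with respect to $\norm{\cdot}_\infty$ and hence maps precompact sets to precompact sets. The slightly more substantial step is the classical Arzel\`a--Ascoli argument for equicontinuity: given $\varepsilon>0$, choose a finite $\tfrac{\varepsilon}{3}$-net $f_1,\ldots,f_n\in A$; each $f_i$ is continuous on the compact space $X$, hence uniformly continuous, so there exists a common $\delta>0$ such that $d(x,y)\leq\delta$ implies $\norm{f_i(x)-f_i(y)}_E\leq \tfrac{\varepsilon}{3}$ for all $i$. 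For any $f\in A$, pick $f_i$ with $\norm{f-f_i}_\infty\leq \tfrac{\varepsilon}{3}$ and apply the triangle inequality to obtain $\norm{f(x)-f(y)}_E\leq 2\norm{f-f_i}_\infty+\norm{f_i(x)-f_i(y)}_E\leq \varepsilon$ whenever $d(x,y)\leq\delta$.

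No serious obstacle is anticipated here; the result is essentially a reformulation of Theorem~\ref{thm:compactness_in_CXE_2} once one observes that equicontinuity on a compact metric domain is the correct uniform version of the combinatorial condition (DS). The only minor care needed is to use the compactness of $X$ twice --- once to convert the pointwise modulus $\delta$ into a finite cover with controlled diameters, and once to upgrade continuity of each $f_i$ to uniform continuity with a shared modulus.
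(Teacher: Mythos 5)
Your proposal is correct, and its overall architecture (reduce everything to Theorem~\ref{thm:compactness_in_CXE_2} by showing that, on a compact metric domain, equicontinuity and the condition (DS) are interchangeable) is the same as the paper's. The forward implication is essentially identical: both cover $X$ by finitely many balls of radius $\tfrac{1}{2}\delta$ and read off (DS) from the equicontinuity modulus. Where you genuinely diverge is the converse. The paper extracts equicontinuity from the condition (DS) itself, crucially using Remark~\ref{rem:open_closed2} to arrange the covering sets $U_1,\ldots,U_N$ to be \emph{open} and then running a Lebesgue-number-type argument (shrinking balls inside each $U_i$, passing to a finite subcover, and taking $\delta$ to be half the minimal radius) to show that any two $\delta$-close points land in a common $U_l$. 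You instead bypass (DS) entirely in this direction: from precompactness of $A$ you take a finite $\tfrac{\varepsilon}{3}$-net $f_1,\ldots,f_n\subseteq A$, upgrade each $f_i$ to uniform continuity on the compact domain with a common $\delta$, and conclude by the classical $\tfrac{\varepsilon}{3}$-triangle-inequality argument. Both routes are valid; yours is more elementary and self-contained (it does not need the openness refinement of the covering sets), while the paper's version has the side benefit of exhibiting how the regularity information recorded in Remark~\ref{rem:open_closed2} is actually used, which is one of the selling points of their method over the Dunford--Schwartz approach. Your observation that the evaluation maps are $1$-Lipschitz, giving precompactness of the sections, is also fine and matches what the paper implicitly uses.
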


\begin{proof}
Assume that $A\subseteq C(X,E)$ is equicontinous. Let us fix $\varepsilon>0$ and choose $\delta>0$ according to the definition of equicontinuity. Since the family $\bigl\{ B_X(x,\frac{1}{2}\delta)\bigr\}_{x \in X}$ is an open cover of the compact metric space $X$, there exist $x_1,\ldots,x_N \in X$ such that $X=\bigcup_{i=1}^N B_X(x_i,\frac{1}{2}\delta)$. Let $U_i:=B_X(x_i,\frac{1}{2}\delta)$ for $i=1,\ldots,N$. It is now clear that if $x,y \in U_i$, then $d(x,y)\leq \delta$, and hence $\sup_{f \in A}\norm{f(x)-f(y)}_E \leq \varepsilon$. This means that $A$ satisfies the condition (DS). As the sections $A(x)$ for $x \in X$ are precompact, it suffices to apply Theorem~\ref{thm:compactness_in_CXE_2} to conclude that $A$ is precompact. 

Let us move to the second part of the proof. This time we assume that $A$ is precompact. Then, in view of Theorem~\ref{thm:compactness_in_CXE_2} and Remark~\ref{rem:open_closed2}, it satisfies the condition (DS) with open sets. Also, for each $x \in X$ the set $A(x)$ is precompact. Our aim is to show that $A$ is equicontinuous. So, let us fix $\varepsilon>0$, and let $U_1,\ldots,U_N$ be the open cover of $X$ appearing in the condition (DS). Given a set $U_i$ and $\xi_i \in U_i$, let $r_{\xi_i}>0$ be such that $B_X(\xi_i,r_{\xi_i})\subseteq U_i$. Then, the family $\bigcup_{i=1}^N \bigl\{B_X(\xi_i,\frac{1}{2}r_{\xi_i})\bigr\}_{\xi_i \in U_i}$ is an open cover of the compact metric space $X$. Hence, it contains a finite subcover. Let us denote the elements of this subcover by $B_X(x_1,\frac{1}{2}r_1),\ldots,B_X(x_m,\frac{1}{2}r_m)$. Moreover, let $\delta:=\frac{1}{2}\min_{1\leq k \leq m} r_k$. Now, choose arbitrary $x,y \in X$ such that $d(x,y)\leq \delta$, and fix $f \in A$. Then, $x \in B_X(x_j,\frac{1}{2}r_j)\subseteq U_l$ for some $j \in \{1,\ldots,m\}$ and $l \in \{1,\ldots,N\}$. Consequently, $y \in B_X(x_j,r_j) \subseteq U_l$. Now, it is enough to use the condition (DS) to conclude that $\norm{f(x)-f(y)}_E \leq \varepsilon$. In other words, $A$ is equicontinuous. 
\end{proof}

\section{Compactness in Lipschitz spaces}
\label{sec:compact_in_lip}

The goal of this section is to provide a full characterization of precompact subsets of both the Lipschitz and little Lipschitz spaces. In the first part of this section we will work with functions defined on an arbitrary metric space. In the second one, we will move to the case of compact domains.

\begin{notation}
Throughout this section for a given metric space $(X,d)$ we will write $\tilde{X}$ for the metric space $(X\times X) \setminus \dset{(x,x)}{x \in X}$. We endow $\tilde{X}$ (and any other subset of $X\times X$) with the maximum metric $d_\infty$ inherited from $X \times X$. For completeness, let us add that for $(x_1,x_2),(y_1,y_2) \in X\times X$ the \emph{maximum metric} is defined by the formula $d_\infty((x_1,x_2),(y_1,y_2)) = \max\{ d(x_1,y_1), d(x_2,y_2) \}$.
\end{notation}

\subsection{Comparison function}
A key ingredient when defining classes of Lipschitz continuous mappings is the so-called \emph{comparison function}, that is, a non-zero function $\varphi\colon [0,+\infty)\to[0,+\infty)$ that is right-continuous at $0$, concave and such that $\varphi(0)=0$. Typical examples of such functions are the power functions $\varphi(t)=t^\alpha$ for $\alpha \in (0,1]$. They are especially important, as they give rise to the classical classes of Lipschitz/H\"older continuous mappings (for the appropriate definitions see the next subsection). Another example of a comparison function is $\varphi(t)=\ln(1+t)$. It may be interesting, because near zero it behaves like the identity function, but when $t$ tends to infinity, it increases slower than any of the power functions defined above. In the sequel we will use several basic properties of such functions; we gathered them in the following lemma. We skip its proof, because it is straightforward and the arguments can be found scattered around various books on real functions or the internet (especially, on the Mathematics Stack Exchange forum -- see, for example, question 2757353).

\begin{lemma}\label{lem:comparision_function}
Let $\varphi\colon [0,+\infty)\to[0,+\infty)$ be a comparison function. Then, 
\begin{enumerate}[label=\textup{(\alph*)}]
 \item\label{lem:comparision_function_a} $\varphi$ is continuous on $[0,+\infty)$,
 \item\label{lem:comparision_function_b} $\varphi(t)/t$ is non-increasing on $(0,+\infty)$,
 \item\label{lem:comparision_function_c} $\varphi$ is sub-additive, that is, $\varphi(t+s)\leq \varphi(t)+\varphi(s)$ for $t,s\in [0,+\infty)$,
 \item\label{lem:comparision_function_d} $\varphi$ is non-decreasing on $[0,+\infty)$,
 \item\label{lem:comparision_function_e} $\varphi(t)>0$ for every $t \in (0,+\infty)$,

 \item\label{lem:comparision_function_f} the limit $\lim_{t \to 0^+} \varphi(t)/t$ exists, and is either a positive number or $+\infty$.
\end{enumerate} 
\end{lemma}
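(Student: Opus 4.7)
The plan is to derive all six properties from the four defining features of a comparison function: nonzero, $\varphi(0)=0$, concavity on $[0,+\infty)$, and right-continuity at $0$. The central observation is that for a concave $\varphi$ with $\varphi(0)=0$ the slope quotient $t \mapsto \varphi(t)/t$ is a well-behaved monotone quantity, and this single fact (i.e.\ property~\ref{lem:comparision_function_b}) unlocks almost everything else. So I would not prove the items in the stated order; instead I would prove them in the order (b), (d), (e), (a), (c), (f), since each subsequent item uses the previous ones.

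First I would establish (b). For $0 < s < t$, write $s = \tfrac{s}{t}\cdot t + (1-\tfrac{s}{t})\cdot 0$ and apply concavity together with $\varphi(0)=0$ to obtain $\varphi(s) \geq \tfrac{s}{t}\varphi(t)$; dividing by $s$ yields $\varphi(s)/s \geq \varphi(t)/t$. From (b), property (d) is immediate: for $0 < s < t$ one has $\varphi(t) \geq (t/s)\varphi(s) \geq \varphi(s)$ because $t/s > 1$ and $\varphi(s) \geq 0$; the case $s=0$ follows from $\varphi(0)=0 \leq \varphi(t)$. For (e), the non-zero hypothesis gives some $t_0 > 0$ with $\varphi(t_0) > 0$; by (d), $\varphi(t) \geq \varphi(t_0) > 0$ for $t \geq t_0$, while for $0 < t < t_0$ property (b) yields $\varphi(t)/t \geq \varphi(t_0)/t_0 > 0$, hence $\varphi(t) > 0$.

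Next, property (a) follows from the standard fact that any concave function on an interval is continuous on the interior of that interval, here $(0,+\infty)$; continuity at $0$ from the right is simply the hypothesis, and from the left is vacuous. For (c), assuming $t+s > 0$ (the other case is trivial), write $t = \tfrac{t}{t+s}(t+s) + \tfrac{s}{t+s}\cdot 0$ and apply concavity with $\varphi(0)=0$ to obtain $\varphi(t) \geq \tfrac{t}{t+s}\varphi(t+s)$; the symmetric inequality $\varphi(s) \geq \tfrac{s}{t+s}\varphi(t+s)$ is obtained analogously. Adding them gives $\varphi(t) + \varphi(s) \geq \varphi(t+s)$. Finally, (f) is a direct consequence of (b) and (e): since $t \mapsto \varphi(t)/t$ is non-increasing on $(0,+\infty)$ by (b), the one-sided limit $\lim_{t \to 0^+} \varphi(t)/t$ exists as an element of $(0,+\infty]$, equalling $\sup_{t > 0}\varphi(t)/t$, which is strictly positive by (e).

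There is no genuinely hard step; the only subtlety is to avoid circularity by proving (b) first (the key concavity lemma), then deducing (d), (e), and (f) from it, and separately treating (a) and (c) from basic facts about concave functions. The authors' decision to omit the proof is well-founded, as every step is a one-line application of either concavity evaluated at a convex combination involving $0$ or one of the already established items.
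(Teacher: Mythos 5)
The paper offers no proof of this lemma (the authors explicitly skip it), so the only question is whether your argument stands on its own. Most of it does: your proofs of (b), (a), (c), (e) for small $t$, and (f) are correct one-line applications of concavity at a convex combination involving $0$. But your derivation of (d) contains a reversed inequality that makes the step fail. You claim that for $0<s<t$ one has $\varphi(t)\geq (t/s)\varphi(s)$ ``from (b)''; in fact (b) says $\varphi(s)/s\geq\varphi(t)/t$, i.e.\ $\varphi(t)\leq (t/s)\varphi(s)$ --- the opposite direction. The inequality you wrote is genuinely false: for $\varphi(t)=\min\{t,1\}$ (a valid comparison function), $s=1$, $t=2$ gives $\varphi(2)=1$ while $(t/s)\varphi(s)=2$. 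Moreover, (d) is not a consequence of (b) alone: $\varphi(t)=te^{-t}$ satisfies (b) but is not non-decreasing, so no rearrangement of the slope inequality will rescue the step.

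The correct argument for (d) must use concavity together with non-negativity on the \emph{unbounded} domain: if $\varphi(s)>\varphi(t)$ for some $s<t$, then for $u>t$ concavity forces $\varphi(u)\leq\varphi(t)+\frac{\varphi(t)-\varphi(s)}{t-s}(u-t)\to-\infty$ as $u\to+\infty$, contradicting $\varphi\geq 0$. With (d) repaired, your proof of (e) (which invokes (d) for $t\geq t_0$) and everything downstream goes through unchanged.
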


A word of caution is in order here. The notion of a comparison function is ambiguous and its definition varies throughout the literature. Different classes of comparison functions are defined by means of various combinations of the properties~\ref{lem:comparision_function_a}--\ref{lem:comparision_function_f} and many others. (See~\cite{AKORR}*{Section~2} for a detailed discussion of several types of comparison functions.) Our assumption of concavity may thus seem too strong. And, in fact, it is. However, we decided to use it because of two reasons. It makes all the unnecessary technicalities disappear. And, more importantly, the comparison functions corresponding to the Lipschitz/H\"older classes are concave. In other words, our results cover the most interesting cases.

\subsection{Lipschitz spaces}
\label{sec:52}
Fix a comparison function  $\varphi$. Moreover, let $(X,d)$ be a metric space and let $(E,\norm{\cdot}_E)$ be a normed space. (Note that we \emph{do not} assume that $X$ is bounded or compact.) By $\Lip_\varphi(X,E)$ we denote the class of all $\varphi$-Lipschitz continuous mappings, that is, mappings $f \colon X \to E$ which satisfy the condition
\[
 \abs{f}_{\varphi}:=\sup_{\substack{x,y \in X\\x\neq y}}\frac{\norm{f(x)-f(y)}_E}{\varphi(d(x,y))}<+\infty.
\]
Now, let us fix a point $x_*$ in $X$; such a point is called a \emph{base} (or \emph{distinguished}) \emph{point} of the space $X$. It can be easily checked that $\Lip_\varphi(X,E)$ is a normed space, when endowed with the norm $\norm{f}_\varphi:=\norm{f(x_*)}_E+\abs{f}_{\varphi}$. It should be also evident that for different selections of base points the corresponding norms are equivalent. Furthermore, if $E$ is a Banach space, then  the space $\Lip_\varphi(X,E)$ is complete. 

If $\varphi(t)=t^\alpha$ for $\alpha \in (0,1)$, the space $\Lip_\varphi(X,E)$ coincides with the space of H\"older continuous maps with exponent $\alpha$, and is usually denoted by $\Lip_\alpha(X,E)$. Similarly, if $\varphi(t)=t$, then $\Lip_\varphi(X,E)$ is just the space of Lipschitz mappings $\Lip(X,E)$.

At the space $\Lip_\varphi(X,E)$ we may also look from a slightly different perspective. With the help of Lemma~\ref{lem:comparision_function} it is easy to see that if $\varphi$ is a comparison function and $d$ is a metric on $X$, so is $d_\varphi:=\varphi \circ d$. Therefore, if by $Y$ we denote the metric space $(X,d_\varphi)$, then the space $\Lip(Y,E)$ is well-defined. Moreover, in such a case, $\Lip_{\varphi}(X,E)$ and $\Lip(Y,E)$ are linearly isomorphic. (If we chose the same base point in $X$ and $Y$, then they are even isometrically isomorphic.) So, in the sequel, instead of working in spaces $\Lip_{\varphi}(X,E)$, we could focus on the space of Lipschitz continuous mappings entirely. However, we will not do that. The reason behind our decision is practical. Often, in applications, the more explicit the statements of the results one applies are, the better. But, generally, it is just a matter of taste, which approach to choose. 

It is worth noting that because we do not require the space $X$ to have finite diameter, $\Lip_{\varphi}(X,E)$ may contain unbounded maps. In the literature, the space $\BLip_\varphi(X,E)$ of bounded $\varphi$-Lipschitz continuous mappings is also  considered (cf.~\cite{cobzas_book}*{Chapter~8} or~\cite{weaver}*{Chapter~2}). It is endowed with the norm $\vnorm{f}_\varphi := \norm{f}_\infty + \abs{f}_\varphi$. For arbitrary domains, $\BLip_\varphi(X,E)$ is just a linear subspace of $\Lip_\varphi(X,E)$. Moreover, in general, the norm $\vnorm{\cdot}_\varphi$ on $\BLip_\varphi(X,E)$ is strictly stronger than $\norm{\cdot}_\varphi$ inherited from $\Lip_\varphi(X,E)$. However, if $X$ has finite diameter (or, in particular, is compact), then $\BLip_\varphi(X,E)$ is linearly isomorphic to $\Lip_\varphi(X,E)$. 

Another space of Lipschitz continuous mappings, which plays a prominent role, is the space $\Lip_0(X,E)$. It consists of those Lipschitz maps that vanish at the base point, and is endowed with the norm $f \mapsto \abs{f}_1$; here by $\abs{\cdot}_1$ we denote the semi-norm $\abs{\cdot}_\varphi$ corresponding to the comparison function $\varphi(t)=t$. For an arbitrary domain $X$, $\Lip_0(X,E)$ is a closed subspace of $\Lip(X,E)$, with the quotient $\Lip(X,E)/\Lip_0(X,E)$ isometrically isomorphic to $E$. What may come as a surprise, however, is the fact that $\Lip_0(X,E)$ spaces are very closely related to spaces $\BLip(X,E)$. It turns out that for $E=\mathbb R$ every space of bounded Lipschitz continuous functions, as Weaver puts it on page~42 of his monograph~\cite{weaver}, ``\emph{effectively is, in every important sense, a Lip$_0$ space}.''

The main takeaway from the above discussion is that the (pre)compactness criteria we are going to prove are quite general. Not only can they be applied to the space $\Lip_{\varphi}(X,E)$ and  $\Lip_0(X,E)$ with arbitrary metric space $X$, but also to spaces $\BLip_\varphi(X,E)$. 

For more information on various classes of Lipschitz continuous functions and their properties, we refer the Reader to~\cites{cobzas_book, weaver}.

\subsection{De Leeuw's map}\label{sec:deLeeuw}
When dealing with Lipschitz continuous mappings, the de Leeuw's map $\Phi$ comes in handy. To each $f \in \Lip_\varphi(X,E)$, it assigns the mapping $\Phi(f) \colon \tilde X \to E$ given by  
\[
\Phi(f)(x,y) = \frac{f(x)-f(y)}{\varphi(d(x,y))}.
\]
The mapping $\Phi(f)$ is clearly continuous and bounded on $\tilde X$. Moreover, $\Phi(f)=\Phi(g)$ if and only if the difference $f-g$ is a constant function. More on the de Leeuw's map can be found in~\cite{cobzas_book}*{Section 8.3.1} and \cite{weaver}*{Section 2.4}. 

\subsection{Compactness criteria -- preparatory part}
Before we study (pre)compactness in spaces of Lipschitz continuous mappings, for a non-empty set $A\subseteq L_\varphi(X,E)$ let us introduce the following two conditions:
\begin{enumerate}
 \item[(LDS)] for every $\varepsilon>0$ there is a finite cover $U_1, \ldots, U_N$ of $\tilde X$ such that for any $i\in \{1,\ldots, N\}$ we have
 \[
 \sup_{(x,y), (\xi,\eta) \in U_i} \sup_{f \in A} \norm[\bigg]{\frac{f(x) - f(y)}{\varphi(d(x,y))} - \frac{f(\xi) - f(\eta)}{\varphi(d(\xi,\eta))}}_E\leq \varepsilon,
\]

 \item[(L)] for every $\varepsilon>0$ there is a finite cover $U_1,\ldots,U_N$ of $\tilde X$ such that for any $i \in \{1,\ldots,N\}$ we have
 \[
 \sup_{(x,y), (\xi,\eta) \in U_i} \sup_{f \in A}\abs[\Bigg]{\frac{\norm{f(x) - f(y)}_E}{\varphi(d(x,y))} - \frac{\norm{f(\xi) - f(\eta)}_E}{\varphi(d(\xi,\eta))}}\leq \varepsilon.
 \]  
\end{enumerate}

\begin{remark}\label{rem:open_closed_in_L_LDS}
In the above conditions we may also require all the sets $U_1,\ldots,U_N$ to be either open or closed in $\tilde{X}$ (cf.~Remarks~\ref{rem:open_closed} and~\ref{rem:open_closed2}). 
\end{remark}

\subsection{Compactness criteria -- arbitrary domains}
As the title suggests, in this section we will be interested in Lipschitz continuous maps that are defined on an arbitrary metric space. We begin with the space $\Lip_\varphi(X,E)$.

\begin{theorem}\label{thm:compact_in_Lip}
A non-empty subset $A$ of $\Lip_\varphi(X,E)$ is precompact if and only if
 the set $A-A$ satisfies the condition (L) and the sections $A(x)$ for every $x \in X$ are precompact.
\end{theorem}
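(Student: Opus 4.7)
The plan is to reduce to Theorem~\ref{thm:compactness_BXE} via de Leeuw's map. Fix the base point $x_* \in X$ used to define $\norm{\cdot}_\varphi$, and consider
\begin{equation*}
T \colon \Lip_\varphi(X, E) \to E \times B(\tilde X, E), \qquad T(f) := (f(x_*), \Phi(f)),
\end{equation*}
with the product endowed with the norm $\norm{(e, h)} := \norm{e}_E + \norm{h}_\infty$. Since $\norm{\Phi(f)}_\infty = \abs{f}_\varphi$ by the very definitions of $\Phi$ and $\abs{\cdot}_\varphi$, the map $T$ is a linear isometric embedding; in particular, it preserves precompactness in both directions. Because a subset of a metric product is precompact exactly when each of its projections is, the precompactness of $A$ in $\Lip_\varphi(X,E)$ reduces to the precompactness of $A(x_*)$ in $E$ together with the precompactness of $\Phi(A)$ in $B(\tilde X, E)$.

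Next I would apply Theorem~\ref{thm:compactness_BXE} to $\Phi(A) \subseteq B(\tilde X, E)$. Linearity of $\Phi$ gives $\Phi(A) - \Phi(A) = \Phi(A - A)$, and for $h = \Phi(f-g)$ the identity
\begin{equation*}
\norm{h(x,y)}_E = \frac{\norm{(f-g)(x) - (f-g)(y)}_E}{\varphi(d(x,y))}
\end{equation*}
shows that the condition (B) for $\Phi(A - A)$ is literally the same as the condition (L) for $A - A$. Hence only the section condition of Theorem~\ref{thm:compactness_BXE} needs to be matched up with the section condition in the present theorem.

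This last step is handled via a two-sided inclusion. On the one hand, $\Phi(A)(x,y) \subseteq \varphi(d(x,y))^{-1}(A(x) - A(y))$, so precompactness of all $A(x)$'s forces precompactness of every $\Phi(A)(x,y)$. On the other hand, the identity $f(x) = f(x_*) + \varphi(d(x, x_*)) \Phi(f)(x, x_*)$ for $x \neq x_*$ yields $A(x) \subseteq A(x_*) + \varphi(d(x, x_*)) \Phi(A)(x, x_*)$, so precompactness of $A(x_*)$ together with precompactness of every $\Phi(A)(x, x_*)$ forces precompactness of every $A(x)$. Combining everything gives the claimed equivalence.

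I do not foresee a serious obstacle: the analytic content is already encapsulated in Theorem~\ref{thm:compactness_BXE}, and the work amounts to carefully unwinding the definitions around de Leeuw's embedding. The only mildly delicate point is that precompactness of \emph{all} sections $A(x)$ has to be recovered from just $A(x_*)$ plus the sections of $\Phi(A)$, which is precisely what the base-point identity above delivers.
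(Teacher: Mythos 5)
Your proposal is correct and follows essentially the same route as the paper: the de Leeuw embedding $T(f)=(f(x_*),\Phi(f))$, the identification of condition (L) for $A-A$ with condition (B) for $\Phi(A)-\Phi(A)$, and the two-sided inclusion matching the sections $A(x)$ with $A(x_*)$ and $\Phi(A)(x,y)$ are all exactly the steps in the paper's argument. The only (immaterial) difference is that you land in $B(\tilde X,E)$ and invoke Theorem~\ref{thm:compactness_BXE} directly, whereas the paper lands in $C(\tilde X,E)$ and invokes Theorem~\ref{thm:compactness_in_CXE_1}, which is itself just the restriction of Theorem~\ref{thm:compactness_BXE} to that closed subspace.
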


\begin{proof}
Let $x_\ast$ be the base point of the metric space $X$. Consider the linear operator $T\colon \Lip_\varphi(X,E)\to E\times C(\tilde{X},E)$ given by $T(f)=(f(x_\ast), \Phi(f))$; here $\Phi$ is the de Leeuw's map. It is easy to check that $T$ is an isometric embedding of $\Lip_\varphi(X,E)$ into $E\times C(\tilde{X},E)$, when the target space is endowed with the norm $\norm{(e,g)}:=\norm{e}_E + \norm{g}_\infty$ for $(e,g) \in E\times C(\tilde{X},E)$. Note that the condition (L) for $A-A$ states that the set $\Phi(A)-\Phi(A) \subseteq C(\tilde X,E)$ satisfies the condition (B). Furthermore, precompactness of each section $A(x)$ for $x \in X$ is equivalent with the fact that the sets $A(x_\ast)$ and $\Phi(A)(x,y):=\dset{\Phi(f)(x,y)}{ f\in A}$ for $(x,y) \in \tilde X$ are precompact.

Now, let us assume that the non-empty subset $A$ of $\Lip_\varphi(X,E)$ is precompact. Then, precompact are also the sets $\pi_1(T(A))$ and $\pi_2(T(A))$; here $\pi_1$ and $\pi_2$ are the projections onto the first and second factor of $E\times C(\tilde{X},E)$, respectively.  Note that $\pi_1(T(A))=A(x_\ast)$ and $\pi_2(T(A))=\Phi(A)$. So, by Theorem~\ref{thm:compactness_in_CXE_1} and our preliminary observations, we infer that the condition (L) is satisfied for $A-A$. Also, $A(x)$ is precompact for $x \in X$.

On the other hand, if $A-A$ satisfies the condition (L) and the sections $A(x)$ are precompact for every $x \in X$, then by~Theorem~\ref{thm:compactness_in_CXE_1} the sets $A(x_\ast) \subseteq E$ and $\Phi(A) \subseteq C(\tilde X,E)$ are precompact. Hence, precompact is also their product $A(x_\ast) \times \Phi(A)$. As precompactness is hereditary, this implies the precompactness of $T(A) \subseteq A(x_\ast) \times \Phi(A)$. To end the proof it suffices to note that $A=T^{-1}(T(A))$. 
\end{proof}

In the finite-dimensional case, using Theorem~\ref{thm:compactness_in_CXE_2} instead of Theorem~\ref{thm:compactness_in_CXE_1} and reasoning as in the proof of the above result, we get the following criterion (cf. also Remark~\ref{rem:17}).

\begin{theorem}\label{thm:compact_in_Lip2}
Let $n \in \mathbb N$ and $\mathbb K \in \{\mathbb R, \mathbb C\}$. A non-empty subset $A$ of $\Lip_\varphi(X,\mathbb K^n)$ is precompact if and only if it is bounded and satisfies the condition (LDS).
\end{theorem}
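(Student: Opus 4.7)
The plan is to mimic the proof of Theorem~\ref{thm:compact_in_Lip}, substituting the finite-dimensional criterion Theorem~\ref{thm:compactness_in_CXE_2}\ref{it:CXE_2_i} for Theorem~\ref{thm:compactness_in_CXE_1}. Concretely, I would again use the isometric embedding $T\colon \Lip_\varphi(X,\mathbb K^n)\to \mathbb K^n\times C(\tilde X,\mathbb K^n)$ defined by $T(f)=(f(x_\ast),\Phi(f))$, with $\mathbb K^n\times C(\tilde X,\mathbb K^n)$ equipped with the norm $\norm{(e,g)}:=\norm{e}_{\mathbb K^n}+\norm{g}_\infty$. The crucial observation is that the condition (LDS) for $A$ is literally the condition (DS) applied to the set $\Phi(A)\subseteq C(\tilde X,\mathbb K^n)$.

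For the ``only if'' direction, I would start from precompactness of $A$ and push it forward through the continuous projections: precompactness of $T(A)$ implies precompactness of $\pi_1(T(A))=A(x_\ast)$ and $\pi_2(T(A))=\Phi(A)$. In particular $A$ is bounded. Since $\Phi(A)$ is a precompact subset of $C(\tilde X,\mathbb K^n)$, Theorem~\ref{thm:compactness_in_CXE_2}\ref{it:CXE_2_i} (which works in the arbitrary topological-space setting with finite-dimensional target) yields that $\Phi(A)$ satisfies (DS), i.e.\ $A$ satisfies (LDS).

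For the ``if'' direction, assume $A$ is bounded in the norm $\norm{\cdot}_\varphi$ and satisfies (LDS). Boundedness of $\norm{f(x_\ast)}_{\mathbb K^n}+\abs{f}_\varphi$ over $f\in A$ immediately gives that $A(x_\ast)\subseteq \mathbb K^n$ is bounded, and that for every $(x,y)\in\tilde X$ the section $\Phi(A)(x,y)$ is bounded (since $\norm{\Phi(f)(x,y)}_{\mathbb K^n}\leq \abs{f}_\varphi$). In finite dimensions bounded subsets are precompact, so all sections are precompact; combined with (LDS)=(DS) for $\Phi(A)$, Theorem~\ref{thm:compactness_in_CXE_2}\ref{it:CXE_2_i} yields that $\Phi(A)$ is precompact in $C(\tilde X,\mathbb K^n)$. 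Consequently $T(A)\subseteq A(x_\ast)\times\Phi(A)$ is precompact, and since $T$ is an isometric embedding and precompactness is hereditary and transfers through isometries, $A=T^{-1}(T(A))$ is precompact.

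There is no substantive obstacle here; the proof is essentially a bookkeeping exercise. The only point that requires a brief verification is the equivalence between ``$A$ bounded in $\Lip_\varphi(X,\mathbb K^n)$'' and ``$A(x_\ast)$ bounded together with $\Phi(A)$ pointwise (equivalently, uniformly) bounded,'' which is immediate from the definition $\norm{f}_\varphi=\norm{f(x_\ast)}_E+\abs{f}_\varphi$ and from the identity $\abs{f}_\varphi=\sup_{(x,y)\in\tilde X}\norm{\Phi(f)(x,y)}_E$. This is what lets us replace the precompactness-of-sections hypothesis in Theorem~\ref{thm:compact_in_Lip} by the single boundedness assumption appearing in the statement, in line with Remark~\ref{rem:17}.
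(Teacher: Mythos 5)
Your proposal is correct and is precisely the argument the paper intends: the paper states this theorem with only the remark that one should reason as in Theorem~\ref{thm:compact_in_Lip}, replacing Theorem~\ref{thm:compactness_in_CXE_1} by Theorem~\ref{thm:compactness_in_CXE_2} and invoking Remark~\ref{rem:17}, which is exactly what you do. Your identification of (LDS) for $A$ with (DS) for $\Phi(A)$ and your reduction of the section-precompactness hypothesis to boundedness via $\abs{f}_\varphi=\sup_{(x,y)\in\tilde X}\norm{\Phi(f)(x,y)}_E$ correctly fill in the details the paper leaves implicit.
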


\begin{remark}\label{rem:Lip_0}
It is worth underlining that Theorems~~\ref{thm:compact_in_Lip} and~\ref{thm:compact_in_Lip2} can be also applied to $\Lip_0(X,E)$ with any metric space $X$. This follows from the fact that $\Lip_0(X,E)$ is a closed subspace of $\Lip(X,E)$.
\end{remark}

From Remark~\ref{rem:open_closed_in_L_LDS} we know that in the conditions (LDS) and (L) we may require all the sets $U_1,\ldots,U_N$ to be open in $\tilde{X}$. It would be tempting to replace those sets with open balls in Theorems~\ref{thm:compact_in_Lip} and~\ref{thm:compact_in_Lip2}. Unfortunately, this is impossible, even when $E$ is one-dimensional, as the following example shows. 

\begin{example}\label{ex:balls_open_sets}
Set $X:=[0,2]$ and consider the function $f \colon X \to \mathbb R$ given by $f(x)=1-\abs{x-1}$. It clearly belongs to $\Lip(X,\mathbb R)$. Now, take any finite family of open balls in $\tilde X$ that covers $\tilde X$. Let $B_{\tilde X}((\xi,\zeta),r)$ be a ball belonging to this family that contains infinitely many points of the form $(1-\frac{1}{n},1)$, where $n \in \mathbb N$. Then, $\abs{\xi-1+\frac{1}{n}}<r$ and $\abs{\zeta-1}<r$ for infinitely many $n \in \mathbb N$. Using continuity of the absolute value, we infer that for all but finitely many $m \in \mathbb N$ we have $\abs{\zeta-1+\frac{1}{m}}<r$. This implies that the open ball 
$B_{\tilde X}((\xi,\zeta),r)$ contains (at least) two points $(1-\frac{1}{n_1},1-\frac{1}{m_1})$ and $(1-\frac{1}{n_2},1-\frac{1}{m_2})$ with $n_1<m_1$ and $n_2>m_2$. And then
\[
 \abs[\Bigg]{\frac{f\bigl(1-\frac{1}{n_1}\bigr) - f\bigl(1-\frac{1}{m_1}\bigr)}{\abs[\big]{\frac{1}{m_1}-\frac{1}{n_1}}} - \frac{f\bigl(1-\frac{1}{n_2}\bigr) - f\bigl(1-\frac{1}{m_2}\bigr)}{\abs[\big]{\frac{1}{m_2}-\frac{1}{n_2}}}}=2.
\]
This shows that replacing the open sets $U_1,\ldots,U_N$ with open balls in the condition (LDS) in the statement of Theorem~\ref{thm:compact_in_Lip2} is not possible, even if $E=\mathbb R$ and the set $A$ is a singleton.

A similar example works also in the case of the condition (L) and Theorem~\ref{thm:compact_in_Lip}. It suffices to take the set $A:=\{f,g\}$ with the function $f$ defined above and $g(x)=0$ for $x \in X$ as well as the points $(1-\frac{1}{n},1)$ and $(1-\frac{1}{n}, 1+\frac{1}{n})$. (Note that for a sufficiently large $n \in \mathbb N$ those points must lie in the same open ball belonging to a given family of open balls covering $\tilde X$.) The details are left to the reader.
\end{example}

We end this section with a brief discussion of precompactness criteria in $\BLip_\varphi(X,E)$. As the default norm $\vnorm{\cdot}_\varphi$ in this space is stronger than the norm $\norm{\cdot}_\varphi$ inherited from $\Lip_\varphi(X,E)$, we will need an additional condition.

\begin{theorem}\label{thm:compact_in_BLip}
A non-empty subset $A$ of $\BLip_\varphi(X,E)$ is precompact if and only if
 the set $A-A$ satisfies both the conditions (L) and (B), and the sections $A(x)$ are precompact for every $x \in X$.
\end{theorem}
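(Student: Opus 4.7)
The plan is to mimic the strategy used for Theorem~\ref{thm:compact_in_Lip}, but since the norm $\vnorm{\cdot}_\varphi$ is stronger than the norm inherited from $\Lip_\varphi(X,E)$ (it involves the full supremum norm of $f$, not just the value at the base point), I would enrich the target of the embedding to capture both pieces of information. Concretely, I would consider the map
\[
 T \colon \BLip_\varphi(X,E) \to B(X,E) \times C(\tilde{X},E),\qquad T(f):=\bigl(f,\Phi(f)\bigr),
\]
where $\Phi$ is the de Leeuw's map from Section~\ref{sec:deLeeuw}. Endowing the target with the norm $\norm{(g,h)}:=\norm{g}_\infty + \norm{h}_\infty$, a direct computation shows $\norm{T(f)}=\norm{f}_\infty+\abs{f}_\varphi=\vnorm{f}_\varphi$, so $T$ is an isometric linear embedding. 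Precompactness of $A$ in $\BLip_\varphi(X,E)$ is therefore equivalent to precompactness of $T(A)$ in $B(X,E) \times C(\tilde{X},E)$, and since a subset of a product of metric spaces is precompact iff its two projections are, the task reduces to characterizing precompactness of the projections $\pi_1(T(A))=A\subseteq B(X,E)$ and $\pi_2(T(A))=\Phi(A)\subseteq C(\tilde{X},E)$.

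The second step is to translate the criteria from Theorems~\ref{thm:compactness_BXE} and~\ref{thm:compactness_in_CXE_1} applied to these two projections into conditions on $A$ itself. By Theorem~\ref{thm:compactness_BXE}, precompactness of $A$ in $B(X,E)$ is equivalent to the conjunction of condition~(B) for $A-A$ and precompactness of every section $A(x)$ for $x\in X$. By Theorem~\ref{thm:compactness_in_CXE_1}, precompactness of $\Phi(A)$ in $C(\tilde{X},E)$ is equivalent to the conjunction of condition~(B) for $\Phi(A)-\Phi(A)=\Phi(A-A)$ and precompactness of every section $\Phi(A)(x,y)$ for $(x,y)\in\tilde{X}$. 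The first of these latter two conditions unfolds to precisely condition~(L) for $A-A$, by the definition of the de Leeuw map. For the second, I would observe that $\Phi(A)(x,y)\subseteq \bigl(A(x)-A(y)\bigr)/\varphi(d(x,y))$, which is a continuous image of the precompact set $A(x)\times A(y)$ and therefore precompact — so this automatically follows from precompactness of the sections $A(x)$.

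Putting the two equivalences together yields: $T(A)$ is precompact in $B(X,E)\times C(\tilde{X},E)$ iff $A-A$ satisfies both~(B) and~(L) and every section $A(x)$ is precompact. In the forward direction one uses that continuous projections of precompact sets are precompact; in the reverse direction, the precompactness of $A$ and $\Phi(A)$ gives precompactness of their product, and $T(A)$, being a subset of this product, is precompact as well (precompactness is hereditary). Since $T$ is an isometry, pulling back recovers precompactness of $A=T^{-1}(T(A))$ in $\BLip_\varphi(X,E)$.

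No step looks genuinely hard; the only minor subtlety is verifying that $T$ really lands in $B(X,E)\times C(\tilde{X},E)$ (which needs $f$ bounded and $\Phi(f)$ bounded and continuous on $\tilde{X}$ — both hold since $f\in\BLip_\varphi(X,E)$ and $\Phi(f)$ has been noted to be bounded and continuous in Section~\ref{sec:deLeeuw}) and that $\norm{T(\cdot)}$ reproduces $\vnorm{\cdot}_\varphi$. The conceptual point worth emphasizing is that condition~(L) is precisely what the de Leeuw transform turns the scalar fluctuations of the difference quotients into, so once the embedding $T$ is set up correctly, the two previously established precompactness criteria fit together to give exactly the conjunction of~(B) and~(L) plus precompact sections.
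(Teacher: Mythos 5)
Your proposal is correct and follows essentially the same route as the paper: an isometric embedding $f\mapsto(f,\Phi(f))$ into a product whose first factor records the supremum norm and whose second factor records the de Leeuw transform, followed by the precompactness criteria for each factor. The only cosmetic difference is that you land the first factor in $B(X,E)$ and invoke Theorem~\ref{thm:compactness_BXE}, whereas the paper uses $C(X,E)$ and Theorem~\ref{thm:compactness_in_CXE_1}; since the two criteria are identical in form, this changes nothing.
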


\begin{proof}[Sketch of the proof]
Consider the linear operator $T\colon \BLip_\varphi(X,E)\to C(X,E)\times C(\tilde{X},E)$ given by $T(f)=(f, \Phi(f))$; here $\Phi$ is the de Leeuw's map. When we endow the target space with the norm $\norm{(g,h)}:=\norm{g}_\infty + \norm{h}_\infty$, then it is clear that $T$ is an isometric embedding of $\Lip_\varphi(X,E)$ into $C(X,E)\times C(\tilde{X},E)$. The rest of the proof is analogous to the proof of Theorem~\ref{thm:compact_in_Lip} with obvious changes.
\end{proof}

In a similar vein to Theorem~\ref{thm:compact_in_Lip2} we get the following compactness criterion for $\BLip_\varphi(X,\mathbb K^n)$. Once again we skip the proof, because at this point it should be evident.

\begin{theorem}\label{thm:compact_in_BLip2}
Let $n \in \mathbb N$ and $\mathbb K \in \{\mathbb R, \mathbb C\}$. A non-empty subset $A$ of $\BLip_\varphi(X,\mathbb K^n)$ is precompact if and only if it is bounded and satisfies the conditions (DS) and (LDS).
\end{theorem}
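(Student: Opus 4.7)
The plan is to follow essentially the same scheme used in Theorem~\ref{thm:compact_in_BLip}, but replacing the general Theorem~\ref{thm:compactness_in_CXE_1} by its finite-dimensional strengthening Theorem~\ref{thm:compactness_in_CXE_2}(a), so that conditions (B) get upgraded to (DS) and (LDS).

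First, I would introduce the linear map $T\colon \BLip_\varphi(X,\mathbb K^n)\to C(X,\mathbb K^n)\times C(\tilde{X},\mathbb K^n)$ defined by $T(f)=(f,\Phi(f))$, where $\Phi$ is the de Leeuw's map. Endowing the target with the norm $\norm{(g,h)}:=\norm{g}_\infty+\norm{h}_\infty$, the identity $\vnorm{f}_\varphi = \norm{f}_\infty + \abs{f}_\varphi = \norm{f}_\infty + \norm{\Phi(f)}_\infty$ shows that $T$ is an isometric embedding. Consequently, $A$ is precompact in $\BLip_\varphi(X,\mathbb K^n)$ if and only if $T(A)$ is precompact in the product.

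Next, I would note that precompactness of $T(A)$ in the product space is equivalent to precompactness of both projections: $\pi_1(T(A))=A$ inside $C(X,\mathbb K^n)$ and $\pi_2(T(A))=\Phi(A)$ inside $C(\tilde{X},\mathbb K^n)$. The forward implication is immediate from continuity of the projections, while the converse follows from the fact that precompactness is preserved under finite products and is hereditary, together with $T(A)\subseteq \pi_1(T(A))\times\pi_2(T(A))$.

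Finally, I would apply Theorem~\ref{thm:compactness_in_CXE_2}(a) (with $E=\mathbb K^n$) to each of the two factors, together with Remark~\ref{rem:17}. The set $A\subseteq C(X,\mathbb K^n)$ is precompact if and only if it is bounded in the supremum norm and satisfies the condition (DS). The set $\Phi(A)\subseteq C(\tilde{X},\mathbb K^n)$ is precompact if and only if it is bounded in the supremum norm on $\tilde{X}$ and satisfies (DS) on $\tilde{X}$; but unwinding the definition of $\Phi$, the latter is precisely the condition (LDS) for $A$, and boundedness of $\Phi(A)$ in the supremum norm is the same as $\sup_{f\in A}\abs{f}_\varphi<+\infty$. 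Combining both factors, joint boundedness translates to boundedness of $A$ in the $\vnorm{\cdot}_\varphi$-norm, producing the desired characterization. There is no real obstacle here, the only thing to verify carefully is this last bookkeeping step that matches the two sup-norm bounds on $A$ and $\Phi(A)$ with boundedness in $\BLip_\varphi(X,\mathbb K^n)$ and identifies (DS) on $\tilde X$ applied to $\Phi(A)$ with (LDS) applied to $A$.
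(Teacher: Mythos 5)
Your proposal is correct and is precisely the argument the paper intends: the authors skip the proof of this theorem as "evident," meaning exactly the combination you describe of the embedding $T(f)=(f,\Phi(f))$ from the proof of Theorem~\ref{thm:compact_in_BLip} with the finite-dimensional criterion of Theorem~\ref{thm:compactness_in_CXE_2}\ref{it:CXE_2_i} and Remark~\ref{rem:17}. Your final bookkeeping step matching the two sup-norm bounds to $\vnorm{\cdot}_\varphi$-boundedness and (DS) on $\tilde X$ for $\Phi(A)$ to (LDS) for $A$ is exactly right.
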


Finally, we will show that in Theorems~\ref{thm:compact_in_BLip} and~\ref{thm:compact_in_BLip2} the additional conditions on the sets $A-A$ and $A$, respectively, are essential.

\begin{example}
Let $X:=\mathbb R$ and choose $x_\ast=0$ as the base point. For each $n \in \mathbb N$, let $f_n \colon X \to \mathbb R$ be equal to $f_n(x)=1-\abs{\frac{1}{n}x-1}$ on $[0,2n]$, and zero elsewhere. Also, let $A:=\dset{f_n}{n \in \mathbb N}\cup \{f_0\}$, where $f_0$ denotes the zero function. If by $\norm{\cdot}_1$ we denote the norm in $\Lip(X,\mathbb R)$, then $\norm{f_n}_1 \to 0$ as $n \to +\infty$. Thus, $A$ is precompact as a subset of $\Lip(X,\mathbb R)$, that is, with respect to the norm (metric) inherited from $\Lip(X,\mathbb R)$. In particular, the sets $A-A$ and $A$ satisfy the conditions (L) and (LDS), respectively (see Theorems~\ref{thm:compact_in_Lip} and~\ref{thm:compact_in_Lip2}). Furthermore, $A(x)\subseteq [0,1]$ for $x \in X$, which means that all the sections $A(x)$ are precompact.

Now, our aim is to show that the sets $A-A$ and $A$ do not satisfy the conditions (B) and (DS), respectively. Because (DS) is stronger than (B) and is preserved when passing from a set to its algebraic difference, we need to focus on the condition (B) only. Let $U_1,\ldots,U_N$ be an arbitrary finite cover of $X$. Then, there is a set $U_i$ that contains infinitely many positive integers. Among those numbers, we can always find (at least) two of the form $n$ and $n+k$ for some integer $k \geq n$. And so, $\abs[\big]{\abs{(f_n-f_0)(n)}-\abs{(f_n-f_0)(n+k)}}=1$. Thus, the set $A-A$ does not satisfy the condition (B).

Finally, we show that although $A$ is bounded in $\BLip(X,E)$, it is not precompact in the norm of this space $\vnorm{\cdot}_1$; here $\BLip(X,E)$ denotes the subset of $\Lip(X,\mathbb R)$ consisting of bounded functions. Let $(f_{n_k})_{k \in \mathbb N}$ be an arbitrary subsequence of $(f_n)_{n\in \mathbb N}$. For a fixed $N \in \mathbb N$ choose $l \in \mathbb N$ so that $n_l \geq 2n_N$. Then, $\vnorm{f_{n_l}-f_{n_N}}_1 \geq \abs{f_{n_l}(n_l)-f_{n_N}(n_l)}=1$. Thus, $(f_n)_{n\in \mathbb N}$ does not admit a Cauchy subsequence with respect to $\vnorm{\cdot}_1$. Hence, $A$ is not precompact as a subset of $\BLip(X,\mathbb R)$. 
\end{example}

\subsection{Compactness criteria -- compact domains}

This time we assume that the metric space $X$ is compact. In such a case the spaces $\Lip_\varphi(X,E)$ and $\BLip_\varphi(X,E)$ coincide as sets. Moreover, their norms $\norm{\cdot}_\varphi$ and $\vnorm{\cdot}_\varphi$ are equivalent. Hence, we can pass from one to the other without any concern.

Our main goal in this section will be to show that for compact domains all that really matters is the behaviour of the difference quotients appearing in the condition (L) near the diagonal of $X\times X$. First, however, we will prove two technical lemmas. 

\begin{lemma}\label{lem:introGH}
Let $X$ be a compact metric space. Moreover, assume that $A$ is a non-empty and bounded subset of $\Lip_\varphi(X,E)$ such that the sections $A(x)$ are precompact for $x \in X$. For a fixed $\delta>0$ let $\tilde X_\delta:=(X\times X)\setminus \bigcup_{x\in X}(B_X(x,\delta)\times B_X(x,\delta))$. Also, let $\tilde{A}_\delta$ consist of all the mappings $h$ which are the restrictions of $\Phi(f)$ to $\tilde X_\delta$, where $f \in A$ and $\Phi$ is the de Leeuw's map. Then, for any $\varepsilon>0$ there is a finite open cover of $\tilde X_\delta$ such that $\norm{h(x,y)-h(\xi,\eta)}_E\leq \varepsilon$ for any $h\in \tilde{A}_\delta$ and any $(x,y),(\xi,\eta)$ contained in the same element of the cover of $\tilde X_\delta$. 
\end{lemma}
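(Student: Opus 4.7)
The plan is to recognize $\tilde A_\delta$ as a uniformly equicontinuous family on the compact metric space $\tilde X_\delta$ with precompact sections, and then invoke the Arzel\`a--Ascoli criterion (Theorem~\ref{thm:compactness_in_CXE_3}) together with Remark~\ref{rem:open_closed2} to extract a finite \emph{open} cover realizing the desired uniform estimate. First I would note that $\tilde X_\delta$ is closed in the compact product $X\times X$ (being the complement of an open union of rectangles), hence itself compact. The key simplifying observation is that any $(x,y)\in \tilde X_\delta$ satisfies $y\notin B_X(x,\delta)$ (take $z=x$ in the union defining $\tilde X_\delta$), so $d(x,y)\geq \delta$, and by monotonicity $\varphi(d(x,y))\geq \varphi(\delta)>0$. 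In particular, the denominators in the de Leeuw quotients are bounded away from zero uniformly on $\tilde X_\delta$.

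The central technical step is verifying equicontinuity of $\tilde A_\delta$. I would fix $M>0$ with $\abs{f}_\varphi\leq M$ for all $f\in A$ (using boundedness of $A$ in $\Lip_\varphi(X,E)$; see Remark~\ref{rem:boundedness}) and use the algebraic splitting
\[
h(x,y)-h(\xi,\eta) = \frac{\bigl(f(x)-f(y)\bigr)-\bigl(f(\xi)-f(\eta)\bigr)}{\varphi(d(x,y))} + \bigl(f(\xi)-f(\eta)\bigr)\biggl(\frac{1}{\varphi(d(x,y))}-\frac{1}{\varphi(d(\xi,\eta))}\biggr)
\]
for $h=\Phi(f)|_{\tilde X_\delta}$. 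The numerator of the first term is bounded by $M\bigl(\varphi(d(x,\xi))+\varphi(d(y,\eta))\bigr)$ from the $\varphi$-Lipschitz inequality, while for the second term $\norm{f(\xi)-f(\eta)}_E\leq M\varphi(d(\xi,\eta))$ reduces the estimate to $M\abs{\varphi(d(x,y))-\varphi(d(\xi,\eta))}/\varphi(d(x,y))$. Since $X$ is compact, $\varphi$ is uniformly continuous on $[0,\diam X]$; combined with the uniform lower bound $\varphi(\delta)$ on the denominators and the uniform constant $M$, both terms vanish uniformly in $f\in A$ as $d_\infty((x,y),(\xi,\eta))\to 0$.

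For the sections, I would observe that $\tilde A_\delta(x,y)$ is the image of $A(x)\times A(y)\subseteq E\times E$ under the continuous map $(a,b)\mapsto (a-b)/\varphi(d(x,y))$, hence precompact as the continuous image of a precompact set. At this point Theorem~\ref{thm:compactness_in_CXE_3} applies and $\tilde A_\delta$ is precompact in $C(\tilde X_\delta,E)$, so Theorem~\ref{thm:compactness_in_CXE_2} together with Remark~\ref{rem:open_closed2} furnishes an open cover of $\tilde X_\delta$ satisfying the (DS)-type inequality, which is exactly the claim. Alternatively and more directly, uniform equicontinuity yields $\delta_\varepsilon>0$ such that $d_\infty$-proximity within $\delta_\varepsilon$ forces the $\varepsilon$-estimate; then covering the compact $\tilde X_\delta$ by finitely many open balls of radius $\tfrac{1}{2}\delta_\varepsilon$ closes the argument. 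The main obstacle I expect is the equicontinuity estimate itself; the restriction to $\tilde X_\delta$ (and the resulting lower bound $\varphi(d(x,y))\geq \varphi(\delta)$) is indispensable, since without it the denominators could degenerate and no uniform bound on the second term would be available.
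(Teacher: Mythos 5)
Your proposal is correct, and it takes a genuinely different (and in fact more economical) route than the paper. The paper does not prove equicontinuity of $\tilde A_\delta$ directly; instead it verifies the condition (B) for the algebraic difference $\tilde A_\delta-\tilde A_\delta$ (via an estimate on $\bigl|\,\|(h_1-h_2)(x,y)\|_E-\|(h_1-h_2)(\xi,\eta)\|_E\,\bigr|$ structurally similar to yours, with the lower bound $m\leq\varphi(d(x,y))$ obtained by a compactness/disjointness argument rather than your cleaner observation that $z=x$ in the defining union forces $d(x,y)\geq\delta$), and then invokes Corollary~\ref{rem:DS_B_3} to upgrade (B) for the difference set to the (DS)-type conclusion for $\tilde A_\delta$; it is precisely this upgrade that consumes the hypothesis that the sections $A(x)$ are precompact. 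Your ``alternatively and more directly'' route --- splitting $h(x,y)-h(\xi,\eta)$ into a difference-of-numerators term and a difference-of-reciprocals term, bounding both by $M/\varphi(\delta)$ times quantities controlled by the uniform continuity of $\varphi$ on $[0,\diam X]$, and then covering the compact $\tilde X_\delta$ by finitely many open $d_\infty$-balls of radius $\tfrac12\delta_\varepsilon$ --- yields the conclusion without ever using the precompactness of the sections, so that hypothesis becomes superfluous for this lemma under your argument. (Your first suggested route through Theorems~\ref{thm:compactness_in_CXE_3} and~\ref{thm:compactness_in_CXE_2} does reuse the sections and is more roundabout; the direct covering argument is the one worth keeping.) What the paper's approach buys is uniformity of method --- it recycles the (B)/(DS) machinery already built --- while yours buys a shorter, self-contained proof with a weaker hypothesis.
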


\begin{proof} 
Clearly, $\tilde X_\delta$ is a compact metric space in the maximum metric $d_\infty$ inherited from $X\times X$, and $\tilde A_\delta \subseteq C(\tilde X_\delta,E)$. Furthermore, observe that precompactness of $A(x)$ for $x\in X$ implies the precompactness of $\tilde A_\delta (x,y):=\dset{h(x,y)}{h \in \tilde A_\delta}$ for $(x,y) \in \tilde X_\delta$. Thus, by Corollary~\ref{rem:DS_B_3} we only need to show that the algebraic difference $\tilde A_\delta - \tilde A_\delta$ satisfies the condition (B) with open sets (cf.~Remarks~\ref{rem:open_closed} and~\ref{rem:open_closed2}).

By assumption, the set $A$ is bounded in $\Lip_\varphi(X,E)$. So, there is a positive constant $M$ such that $\vnorm{f}_\varphi=\norm{f}_\infty+\abs{f}_\varphi \leq M$ for $f\in A$. We may also assume that $M\geq \sup\dset{\varphi(d(x,y))}{x,y\in X}$. Because the diagonal $\dset{(x,x)}{x\in X}$ is compact and disjoint from the compact set $\tilde X_\delta$, there is a~positive constant $m$ such that $m\leq \inf\dset{\varphi(d(x,y))}{(x,y)\in \tilde X_\delta}$. 

Now, fix an arbitrary $\varepsilon>0$. Recall that the comparison function is continuous on the non-negative half-axis $[0,+\infty)$ and $\varphi(0)=0$. Hence, there exists $r>0$ such that $\varphi(r) \leq \frac{m^2}{8M^2}\varepsilon$ and $\abs{\varphi(t)-\varphi(s)}\leq \frac{m^2}{8M}\varepsilon$ for any $t,s \in [0,\diam X]$ with $\abs{t-s}\leq 2r$. As the family $U_1,\ldots,U_N$ let us choose any finite covering of $\tilde X_\delta$ with open balls in $\tilde X_\delta$ of radius $\frac{1}{2}r$. Now, take any points $(x,y),(\xi,\eta) \in \tilde X_\delta$ belonging to the same member of the covering and any $h_1,h_2 \in \tilde A_\delta$. Then, $h_1=\Phi(f)|_{\tilde X_\delta}$ and $h_2=\Phi(g)|_{\tilde X_\delta}$ for some $f,g \in A$. Setting for simplicity $d_\varphi:=\varphi \circ d$, we obtain
\begin{align*}
&\abs[\Big]{\norm[\big]{(h_1-h_2)(x,y)}_E - \norm[\big]{(h_1-h_2)(\xi,\eta)}_E}\\
&\ \ = \abs[\Bigg]{\frac{\norm[\big]{(f-g)(x)-(f-g)(y)}_E}{d_\varphi(x,y)}-\frac{\norm[\big]{(f-g)(\xi)-(f-g)(\eta)}_E}{d_\varphi(\xi,\eta)}}\\
&\ \ \leq m^{-2} \abs[\Big]{\, d_\varphi(\xi,\eta)\cdot\norm[\big]{(f-g)(x)-(f-g)(y)}_E - d_\varphi(x,y)\cdot \norm[\big]{(f-g)(\xi)-(f-g)(\eta)}_E\,}\\
&\ \ \leq m^{-2}d_\varphi(\xi,\eta)\cdot \norm[\big]{(f-g)(x)-(f-g)(\xi) + (f-g)(\eta)-(f-g)(y)}_E\\
&\qquad + m^{-2}\abs[\big]{d_\varphi(x,y)-d_\varphi(\xi,\eta)}\cdot \norm[\big]{(f-g)(\xi)-(f-g)(\eta)}_E\\
&\ \ \leq  m^{-2}d_\varphi(\xi,\eta)\cdot \norm[\big]{(f-g)(x)-(f-g)(\xi)}_E + m^{-2}d_\varphi(\xi,\eta)\cdot \norm[\big]{(f-g)(\eta)-(f-g)(y)}_E\\
&\qquad + m^{-2}\abs[\big]{d_\varphi(x,y)-d_\varphi(\xi,\eta)}\cdot \norm[\big]{(f-g)(\xi)}_E + m^{-2}\abs[\big]{d_\varphi(x,y)-d_\varphi(\xi,\eta)}\cdot \norm[\big]{(f-g)(\eta)}_E\\
&\ \ \leq 2m^{-2}M d_\varphi(\xi,\eta)\bigl[d_\varphi(x,\xi) + d_\varphi(\eta,y)\bigr] + 4m^{-2}M\abs[\big]{d_\varphi(x,y)-d_\varphi(\xi,\eta)}\\
&\ \ \leq 2m^{-2}M^2\bigl[\varphi(d(x,\xi)) + \varphi(d(\eta,y))\bigr] + 4m^{-2}M\abs[\big]{\varphi(d(x,y))-\varphi(d(\xi,\eta))}.
\end{align*} 
Recall that $\tilde X_\delta$ is endowed with the maximum metric $d_\infty$. Hence, $\max\{d(x,\xi),d(\eta,y)\}\break=d_\infty((x,y),(\xi,\eta)) \leq r$ and $\abs{d(x,y)-d(\xi,\eta)}\leq d(x,\xi)+d(y,\eta)\leq 2r$. Thus, we obtain
\begin{align*}
 \abs[\Big]{\norm[\big]{(h_1-h_2)(x,y)}_E - \norm[\big]{(h_1-h_2)(\xi,\eta)}_E} \leq \tfrac{1}{2}\varepsilon  + \tfrac{1}{2}\varepsilon =\varepsilon.
\end{align*}
Consequently, the algebraic difference $\tilde A_\delta - \tilde A_\delta$ satisfies the condition (B).
\end{proof}

\begin{lemma}\label{lem:tube}
Let $X$ be a compact metric space. Moreover, assume that for some $n \in \mathbb N$ the diagonal of $X\times X$ is covered by a finite collection of open balls ${B}_X(x_i,\frac{1}{n})\times {B}_X(x_i,\frac{1}{n})$, where $i=1,\ldots, m$. Then, there is a number $\delta>0$ such that $\bigcup_{x\in X}({B}_X(x,\delta)\times {B}_X(x,\delta)) \subseteq \bigcup_{i=1}^m({B}_X(x_i,\frac{1}{n})\times{B}_X(x_i,\frac{1}{n}))$.
\end{lemma}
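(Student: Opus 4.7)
The plan is to exploit the compactness of $X$ together with the fact that each $B_X(x_i,\tfrac{1}{n}) \times B_X(x_i,\tfrac{1}{n})$ is open in $X \times X$. First I would observe that the family $\{B_X(x_i,\tfrac{1}{n})\}_{i=1}^m$ is an open cover of $X$ itself: indeed, for each $x \in X$ the point $(x,x)$ lies on the diagonal and is therefore contained in some $B_X(x_i,\tfrac{1}{n}) \times B_X(x_i,\tfrac{1}{n})$, which forces $x \in B_X(x_i,\tfrac{1}{n})$. So the hypothesis about a cover of the diagonal actually provides a cover of $X$ by balls of radius $\tfrac{1}{n}$ centered at the $x_i$.

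Next, I would invoke the Lebesgue number lemma for this open cover of the compact metric space $X$: there exists $\delta>0$ such that for every $x \in X$ the ball $B_X(x,\delta)$ is contained in some $B_X(x_i,\tfrac{1}{n})$. For this $\delta$, fix any $x \in X$ and choose an index $i \in \{1,\ldots,m\}$ with $B_X(x,\delta)\subseteq B_X(x_i,\tfrac{1}{n})$. Then, since the cover is built from product sets of equal radii, we immediately have $B_X(x,\delta)\times B_X(x,\delta) \subseteq B_X(x_i,\tfrac{1}{n})\times B_X(x_i,\tfrac{1}{n})$. Taking the union of the left-hand sides over all $x\in X$ yields the claimed inclusion.

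There is no real obstacle here; the lemma is essentially a repackaging of the Lebesgue number lemma, with the product structure of the cover ensuring that carving a uniform tube around the diagonal is the same as carving a uniform tube in the factor $X$. If one prefers to avoid quoting the Lebesgue number lemma, the same argument can be phrased by contradiction: assuming no such $\delta$ exists, for each $k\in\mathbb N$ one extracts $x_k \in X$ together with points $y_k,z_k \in B_X(x_k,\tfrac{1}{k})$ such that $(y_k,z_k)$ escapes every $B_X(x_i,\tfrac{1}{n})\times B_X(x_i,\tfrac{1}{n})$. Compactness of $X$ provides a subsequence $x_{k_j}\to x^*$, and then $y_{k_j},z_{k_j}\to x^*$ as well, so $(y_{k_j},z_{k_j}) \to (x^*,x^*)$ in the maximum metric. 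Since $(x^*,x^*)$ belongs to some open $B_X(x_{i_0},\tfrac{1}{n})\times B_X(x_{i_0},\tfrac{1}{n})$, for large $j$ the point $(y_{k_j},z_{k_j})$ must lie in this open set, contradicting its construction.
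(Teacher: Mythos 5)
Your proposal is correct. Your primary argument, however, takes a genuinely different route from the paper's. You first observe that covering the diagonal by the product balls is the same as covering $X$ itself by the balls $B_X(x_i,\tfrac{1}{n})$, and then you invoke the Lebesgue number lemma for this cover of the compact space $X$; the product structure then hands you the two-variable inclusion for free. The paper instead argues by contradiction directly in $X\times X$: it extracts a sequence of centers $\zeta^q$ with offending points in $B_X(\zeta^q,\tfrac{1}{q})\times B_X(\zeta^q,\tfrac{1}{q})$, passes to a convergent subsequence $\zeta^q \to z$, and uses the openness of the \emph{union} $\bigcup_{i=1}^m\bigl(B_X(x_i,\tfrac{1}{n})\times B_X(x_i,\tfrac{1}{n})\bigr)$ at $(z,z)$ to reach a contradiction. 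Your alternative contradiction argument at the end is essentially the paper's proof, differing only in that you place the limit point $(x^*,x^*)$ inside a single member of the cover rather than inside the open union. What your Lebesgue route buys is brevity, a reduction of the two-variable statement to a standard one-variable fact, and a slightly stronger conclusion (each tube piece $B_X(x,\delta)\times B_X(x,\delta)$ lands in a \emph{single} set $B_X(x_i,\tfrac{1}{n})\times B_X(x_i,\tfrac{1}{n})$, not merely in their union); what the paper's argument buys is self-containedness, avoiding any appeal to the Lebesgue number lemma. The only point to watch in your version is the usual off-by-a-constant in the Lebesgue number lemma (a set of diameter less than $\delta$ versus a ball of radius $\delta$), which is harmless after shrinking $\delta$.
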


\begin{proof}
Let $n \in \mathbb N$ such that $\dset{(x,x)}{x \in X} \subseteq \bigcup_{i=1}^m ({B}_X(x_i,\frac{1}{n})\times {B}_X(x_i,\frac{1}{n}))$ be fixed. Suppose that the claim is not true, that is, for any $q\in \mathbb{N}$  there is a point $(\xi^q,\eta^q)\in {B}_X(\zeta^q,\frac{1}{q})\times {B}_X(\zeta^q,\frac{1}{q})$ such that $(\xi^q,\eta^q) \notin \bigcup_{i=1}^m({B}_X(x_i,\frac{1}{n})\times {B}_X(x_i,\frac{1}{n}))$. By compactness of $X$ we may assume that the sequence of centers $(\zeta^q)_{q \in \mathbb N}$ converges to a point $z\in X$. As the set $\bigcup_{i=1}^m (B_X(x_i,\frac{1}{n})\times B_X(x_i,\frac{1}{n}))$ is open in $X\times X$, there is some $\delta>0$ such that $B_X(z,\delta)\times B_X(z,\delta)\subseteq  \bigcup_{i=1}^m({B}_X(x_i,\frac{1}{n})\times {B}_X(x_i,\frac{1}{n}))$. But then, for all but finitely many $q \in \mathbb N$ we have ${B}_X(\zeta^q,\frac{1}{q})\times {B}_X(\zeta^q,\frac{1}{q})\subseteq B_X(z,\delta)\times B_X(z,\delta)$, which is impossible.
\end{proof}

We saw in the proof of Lemma~\ref{lem:introGH} that compactness of $X$ was essential. Without it we would not have been able to apply Corollary~\ref{rem:DS_B_3}, or find the constants $m$ and $M$. On the other hand, in Lemma~\ref{lem:tube} this requirement seems a little bit artificial. Therefore, it is interesting to ask whether Lemma~\ref{lem:tube} would still hold, if the domain $X$ was allowed to be non-compact. Note that this question makes sense only for bounded metric spaces. Because, otherwise, the diagonal of $X\times X$ cannot be covered by a finite family of (open) balls. The following example shows that, if we replace ``compactness'' with ``boundedness'' in the statement of Lemma~\ref{lem:tube}, the result will fail. 

\begin{example}
Set $X:=\mathbb Z \setminus\{0\}$. Now, we define a metric on $X$. To simplify the formulas, we will write $(\pm k, \pm l)$ if both the numbers $k,l$ have the same sign, and $(\pm k, \mp l), (\mp k, \pm l)$ otherwise. Let
\begin{align*}
d(\pm n,\pm n)&:=0 & & \hspace*{-2cm} \text{for $n\in \N$,}\\
d(\pm 1,\mp 1)&:=\tfrac{1}{2}& &\\
d(\pm 1,\pm n)&:=d(\pm n,\pm 1):=\tfrac{1}{2}-\tfrac{1}{2n}&  & \hspace*{-2cm} \text{for $n\geq 2$,}\\
d(\pm 1,\mp n)&:=d(\mp n,\pm 1):=\tfrac{1}{2}&  &\hspace*{-2cm}  \text{for $n\geq 2$,}\\
d(\pm n,\mp m)&:=\tfrac{1}{2n}+\tfrac{1}{2m} & &\hspace*{-2cm}  \text{for $n, m\geq 2$,}\\
d(\pm n,\pm m)&:=\tfrac{1}{2n}+\tfrac{1}{2m} & &\hspace*{-2cm}  \text{for $n,m\geq 2$ with $n\neq m$.} 
\end{align*}
The proof that $d$ is indeed a metric on $X$ is quite straightforward, but tedious. So, we skip it.

Observe now that the diagonal of $X\times X$ is covered by $(B_X(1,\frac{1}{2})\times B_X(1,\frac{1}{2})) \cup (B_X(-1,\frac{1}{2}) \times B_X(-1,\frac{1}{2}))$. Further, given any fixed $\delta>0$ let $m,n \in \mathbb N$ be so that $d(m,-n)<\delta$. Then, clearly $(m,-n) \in B_X(m,\delta)\times B_X(m,\delta)$. But $(m,-n) \notin (B_X(1,\frac{1}{2})\times B_X(1,\frac{1}{2})) \cup (B_X(-1,\frac{1}{2}) \times B_X(-1,\frac{1}{2}))$. This shows that $\bigcup_{k \in X} (B_X(k,\delta)\times B_X(k,\delta)) \not\subseteq (B_X(1,\frac{1}{2})\times B_X(1,\frac{1}{2})) \cup (B_X(-1,\frac{1}{2}) \times B_X(-1,\frac{1}{2}))$ for any $\delta>0$.
\end{example}

Before proving the main result of this section, for a non-empty set $A\subseteq \Lip_\varphi(X,E)$ let us introduce yet another condition:
\begin{enumerate}
 \item[($\Lambda$)] for every $\varepsilon>0$ and $n \in \mathbb N$ there is a radius $\delta>0$ and a finite number of open subsets $U_1,\ldots,U_N$ of $\tilde X$ with $\bigcup_{x\in X} (B_X(x,\delta)\times B_X(x,\delta))\cap\tilde X\subseteq \bigcup_{i=1}^N U_i\subseteq \bigcup_{x\in X} (B_X(x,\frac{1}{n})\times B_X(x,\frac{1}{n}))\cap\tilde X$ such that for any $f \in A$ and $i\in \{1,\ldots,N\}$ we have
\[
\sup_{(x,y)\in U_i} \frac{\norm{f(x)-f(y)}_E}{\varphi(d(x,y))}-\inf_{(x,y)\in U_i}\frac{\norm{f(x)-f(y)}_E}{\varphi(d(x,y))}\leq \varepsilon.
\]
\end{enumerate}
The above condition is a localized versions of (L) in a sense that instead of considering a covering of the whole $\tilde{X}$, we focus our attention on the diagonal of $X\times X$. Note also that here we use a slightly different formulation of the inequality appearing in (L). This change is sanctioned by the fact that for a bounded real function $g$ defined on a non-empty set $Y$ we have $\sup_{x,y \in Y}\abs{g(x)-g(y)}=\sup_{x \in Y} g(x) - \inf_{y \in Y}g(y)$ (see~\cite{Lojasiewicz}*{p.~4}).

And now the main result follows.

\begin{proposition}\label{lem:GH}
Let $X$ be a compact metric space. Moreover, assume that $A$ is a non-empty subset of $\Lip_\varphi(X,E)$ such that the sections $A(x)$ are precompact for $x \in X$. Then, the following conditions are equivalent\textup:
\begin{enumerate}[label=\textup{(\roman*)}]
 \item\label{it:L} $A-A$ satisfies (L) with open sets $U_1,\ldots,U_N$,
 \item\label{it:Lambda} $A-A$ satisfies ($\Lambda$).
\end{enumerate}
\end{proposition}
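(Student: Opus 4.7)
The plan is to establish both implications separately. For (i) $\Rightarrow$ (ii), the argument is a straightforward refinement. Given $\varepsilon>0$ and $n \in \mathbb N$, by compactness of $X$ a finite collection of balls $B_X(x_j,\tfrac{1}{2n})$ covers $X$, so the products $B_X(x_j,\tfrac{1}{n})\times B_X(x_j,\tfrac{1}{n})$ cover the diagonal. Applying Lemma~\ref{lem:tube} yields a $\delta>0$ for which this collection also covers $\bigcup_{x\in X}B_X(x,\delta)\times B_X(x,\delta)$. Then apply (L) with the same $\varepsilon$ to obtain an open cover $V_1,\ldots,V_M$ of $\tilde X$ and form the (non-empty) intersections $W_{i,j} := V_j \cap (B_X(x_i,\tfrac{1}{n})\times B_X(x_i,\tfrac{1}{n})) \cap \tilde X$. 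This finite family is open in $\tilde X$, is contained in the $\tfrac{1}{n}$-tube around the diagonal, and covers the $\delta$-tube intersected with $\tilde X$. The required variation bound on each $W_{i,j}$ is inherited directly from the bound on the ambient $V_j$ given by (L).

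For (ii) $\Rightarrow$ (i), the main hurdle is applying Lemma~\ref{lem:introGH} to $A-A$, which requires that $A-A$ be bounded in $\Lip_\varphi(X,E)$. I will establish this as a preliminary step. Apply ($\Lambda$) with threshold $1$ (say $n=1$) to obtain a radius $\delta_0>0$ and open sets $U_1^0,\ldots,U_{N_0}^0$ covering $\bigcup_x B_X(x,\delta_0)\times B_X(x,\delta_0)\cap \tilde X$. Selecting a reference point $(\xi_i,\eta_i)\in U_i^0$, the variation bound gives $\sup_{(x,y)\in U_i^0}\tfrac{\|h(x)-h(y)\|_E}{\varphi(d(x,y))} \leq 1+\tfrac{\|h(\xi_i)\|_E+\|h(\eta_i)\|_E}{\varphi(d(\xi_i,\eta_i))}$ for every $h\in A-A$, a bound which is uniform in $h$ because each section $A(\xi_i),A(\eta_i)$ is precompact, hence bounded. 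Call the resulting constant $K$. Taking a finite $\tfrac{\delta_0}{2}$-net $z_1,\ldots,z_k$ of the compact space $X$, for each $x\in X$ the pair $(x,z_i)$ with $d(x,z_i)<\tfrac{\delta_0}{2}$ lies in the $\delta_0$-tube around the diagonal, so $\|h(x)-h(z_i)\|_E\leq K\varphi(\tfrac{\delta_0}{2})$; combined with boundedness of the sections $A(z_i)$, this yields a uniform bound $M$ on $\|h\|_\infty$. On the complement of the $\delta_0$-tube one has $\tfrac{\|h(x)-h(y)\|_E}{\varphi(d(x,y))}\leq \tfrac{2M}{\varphi(\delta_0)}$, so $|h|_\varphi \leq \max\{K,\tfrac{2M}{\varphi(\delta_0)}\}$ and $A-A$ is bounded in $\Lip_\varphi(X,E)$.

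With boundedness in hand, the proof is completed by concatenating two covers. Given $\varepsilon>0$, apply ($\Lambda$) with this $\varepsilon$ (and $n$ arbitrary) to obtain a radius $\delta_1>0$ and open sets $V_1,\ldots,V_N$ in $\tilde X$ covering $\bigcup_x B_X(x,\delta_1)\times B_X(x,\delta_1)\cap \tilde X$, which satisfy the variation bound for every $h\in A-A$. Then apply Lemma~\ref{lem:introGH} to the set $A-A$ (whose sections are precompact, being differences of precompact sets) with the parameter $\delta_1/2$, producing a finite open cover of $\tilde X_{\delta_1/2}$ on which $\|\Phi(h)(x,y)-\Phi(h)(\xi,\eta)\|_E\leq \varepsilon$, and a fortiori $\bigl|\,\|\Phi(h)(x,y)\|_E-\|\Phi(h)(\xi,\eta)\|_E\bigr|\leq \varepsilon$, for every $h\in A-A$. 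Because any $(u,v)\in X\times X$ with $d(u,v)<\delta_1$ lies in $B_X(u,\delta_1)\times B_X(u,\delta_1)$ and any pair with $d(u,v)\geq \delta_1$ lies in $\tilde X_{\delta_1/2}$, the union of the two covers is a finite open cover of $\tilde X$ certifying (L). The genuinely delicate step is the boundedness argument, since ($\Lambda$) furnishes only localized information and one must exploit the compactness of $X$ together with precompactness of the sections to transport control from a diagonal neighborhood to the whole space.
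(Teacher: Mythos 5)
Your overall architecture coincides with the paper's: the implication \ref{it:L}$\Rightarrow$\ref{it:Lambda} via Lemma~\ref{lem:tube} and intersecting the (L)-cover with products of balls is exactly the paper's argument, and for \ref{it:Lambda}$\Rightarrow$\ref{it:L} you, like the paper, first establish boundedness in $\Lip_\varphi(X,E)$ and then invoke Lemma~\ref{lem:introGH} away from the diagonal, gluing the resulting cover to the ($\Lambda$)-cover of the diagonal tube. Your boundedness step is actually a pleasant improvement in presentation: where the paper runs two separate proofs by contradiction (one for $\sup_f\norm{f}_\infty$, one for $\sup_f\abs{f}_\varphi$), you extract a direct quantitative bound $K$ on the difference quotients over the $\delta_0$-tube from ($\Lambda$) with $\varepsilon=1$ and then propagate it through a finite net; this is correct and cleaner. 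Applying Lemma~\ref{lem:introGH} to $A-A$ rather than to $A$ is also fine, since $(A-A)(x)=A(x)-A(x)$ is precompact and you have shown $A-A$ to be bounded.

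There is, however, one genuine gap: statement \ref{it:L} requires the cover to consist of sets \emph{open in $\tilde X$}, and the sets you take from Lemma~\ref{lem:introGH} are only guaranteed to be open in $\tilde X_{\delta_1/2}$. Since $\tilde X_{\delta_1/2}$ is a \emph{closed} subset of $X\times X$ (the complement of the open tube $\bigcup_x B_X(x,\delta_1/2)\times B_X(x,\delta_1/2)$), a relatively open subset of it -- e.g.\ an open ball of $\tilde X_{\delta_1/2}$ centered at a point on the topological boundary of $\tilde X_{\delta_1/2}$ -- need not be open in $\tilde X$, so the union of your two families is a finite cover of $\tilde X$ but not an \emph{open} one. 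The paper spends the last page of its proof precisely on this point: it applies Lemma~\ref{lem:introGH} with the smaller parameter $\tfrac14\delta$, replaces each $W_i$ by $W_i\setminus\dset{(x,y)}{\inf_{z}\max\{d(x,z),d(y,z)\}\leq\tfrac12\delta}$, checks that the removed points are still covered by the ($\Lambda$)-sets, and verifies by hand that the shrunken sets are open in $X\times X$. Your argument as written proves only that $A-A$ satisfies (L) with arbitrary (not necessarily open) sets. The gap is repairable -- either by the paper's shrinking device, or more cheaply by noting that (L) without openness together with precompact sections already yields precompactness of $A$ via Theorem~\ref{thm:compact_in_Lip}, after which Remark~\ref{rem:open_closed_in_L_LDS} restores openness -- but some such step must be supplied.
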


\begin{proof}
$\ref{it:L}\Rightarrow\ref{it:Lambda}$ Fix any $\varepsilon>0$ and $n \in \mathbb N$. Then, there exists an open cover $U_1,\ldots,U_N$ of $\tilde X$ such that for any $i \in\{1,\ldots,N\}$ and any $f,g \in A$ we have
\[ 
\sup_{(x,y)\in U_i}\frac{\norm{(f-g)(x) - (f-g)(y)}_E}{\varphi(d(x,y))} -\inf_{(x,y) \in U_i}\frac{\norm{(f-g)(x) - (f-g)(y)}_E}{\varphi(d(x,y))}\leq \varepsilon
\]
(see the observation before Proposition~\ref{lem:GH}). As the set $\tilde X$ is open in $X\times X$, so are the members of the collection $U_1,\ldots,U_N$. Since the diagonal $\dset{(x,x)}{x\in X}$ is a compact subset of the metric space $X\times X$, there is a finite number of points $x_1,\ldots,x_k \in X$ such that $\dset{(x,x)}{x\in X}\subseteq \bigcup_{j=1}^k (B_X(x_j,{\frac{1}{n}})\times B_X(x_j,{\frac{1}{n}}))$. For each $i\in \{1,\ldots,N\}$ and $j\in \{1,\ldots,k\}$ set $V_{i,j}:=(B_X(x_j,{\frac{1}{n}})\times B_X(x_j,{\frac{1}{n}}))\cap U_i$. By Lemma~\ref{lem:tube} it is clear that the family consisting of all the non-empty sets $V_{i,j}$ satisfies all the requirements of the condition ($\Lambda$) for $A-A$.

$\ref{it:Lambda}\Rightarrow\ref{it:L}$ We will divide this part into two steps. We begin with showing that, under the assumption of precompactness of the sections $A(x)$, the condition~\ref{it:Lambda} implies that the set $A$ is bounded in $\Lip_\varphi(X,E)$. Let $\delta>0$ and $U_1,\ldots, U_N$ be chosen as in the condition ($\Lambda$) for $A-A$ and $\varepsilon=n=1$.

Suppose that $\sup_{f\in A}\sup_{x\in X}\norm{f(x)}_E=+\infty$. Then, there exist two sequences $(x_m)_{m\in \mathbb{N}}$ in $X$ and $(f_m)_{m\in \mathbb{N}}$ in $A$ for which $\lim_{m\to\infty}\norm{f_m(x_m)}_E=+\infty$. Since the metric space $X$ is compact we may assume that $\lim_{m\to\infty}x_m=y\in X$. If $x_{m_l}=y$ for infinitely many indices $l \in \mathbb N$, we would get $\lim_{l\to\infty}\norm{f_{m_l}(y)}_E=+\infty$. This, in turn, would mean that $A(y)$ is not precompact, which is absurd. Therefore, we may assume that $x_m \neq y$ for all $m \in \mathbb N$. Hence, $(y,x_m) \in (B_X(y,\delta) \times B_X(y,\delta))\cap \tilde X \subseteq \bigcup_{j=1}^N U_j$ for all but finitely many $m \in \mathbb N$. As there are only $N$ sets $U_j$, this implies that there is an index $i \in \{1,\ldots,N\}$ such that $(y,x_m)\in U_i$ for infinitely many $m$'s. Passing to a subsequence yet another time, we may state that $(y,x_m)\in U_i$ for all $m \in \mathbb N$. Now, pick a function $f \in A$ and a point $(\xi,\eta) \in U_i$. Note that, because of the compactness of the metric space $X$ and the continuity of the functions $\varphi$ and $f$, there is a constant $R>0$ with $\varphi(d(x_m,y))\leq R$ and $\norm{f(x_m)}_E\leq R$ for $m \in \mathbb N$. Observe also that, since the sections $A(x)$ are precompact, (adjusting the constant $R$ if necessary) we have $\norm{(f_m-f)(z)}_E\leq R$ for $m \in \mathbb N$, where $z \in \{\xi,\eta,y\}$. Thus, for all $m \in \mathbb N$ we get
\begin{align*}
&\norm{f_m(x_m)}_E\\
 &\ \ \leq \norm{f_m(x_m)-f(x_m)}_E+\norm{f(x_m)}_E\\
 &\ \ \leq \norm{(f_m-f)(x_m)-(f_m-f)(y)}_E+\norm{(f_m-f)(y)}_E+\norm{f(x_m)}_E\\
 &\ \ = \varphi(d(x_m,y))\cdot \frac{\norm{(f_m-f)(x_m)-(f_m-f)(y)}_E}{\varphi(d(x_m,y))}+\norm{(f_m-f)(y)}_E+\norm{f(x_m)}_E\\
 &\ \ \leq  \varphi(d(x_m,y)) \cdot \sup_{(p,q)\in U_i}\frac{\norm{(f_m-f)(p)-(f_m-f)(q)}_E}{\varphi(d(p,q))}+\norm{(f_m-f)(y)}_E+\norm{f(x_m)}_E\\
 &\ \ \leq \varphi(d(x_m,y))\biggl(1+\frac{\norm{(f_m-f)(\xi)-(f_m-f)(\eta)}_E}{\varphi(d(\xi,\eta))}\biggr)+\norm{(f_m-f)(y)}_E+\norm{f(x_m)}_E\\
&\ \ \leq 3R + 2R^2/\varphi(d(\xi,\eta))<+\infty.
\end{align*}
This leads to a contradiction. Hence, $M:=\sup_{f\in A}\norm{f}_\infty<+\infty$.

Using a similar reasoning to the above one, we will show that the set $A$ is also bounded in the Lipschitz semi-norm $\abs{\cdot}_\varphi$. (Note that we will use the same letters as in the former part, but with possibly different meanings.) Suppose that
\[
 \sup_{f\in A}\sup_{(x,y)\in \tilde X} \frac{\norm{f(x)-f(y)}_E}{\varphi(d(x,y))}=+\infty.
\]
Hence, there are sequences $(x_m, y_m)_{m\in \mathbb{N}}$ in $\tilde X$ and $(f_m)_{m\in \mathbb{N}}$ in $A$ for which $\lim_{m\to\infty} \norm{f_m(x_m)-f_m(y_m)}_E/\varphi(d(x_m,y_m))=+\infty$. Because the metric space $X \times X$ is compact, we may assume that $\lim_{m \to \infty}(x_m,y_m)=(a,b) \in X\times X$. If $a\neq b$, then for every $m \in \mathbb N$ we would have
\[
 \frac{\norm{f_m(x_m)-f_m(y_m)}_E}{\varphi(d(x_m,y_m))} \leq \frac{2M}{\varphi(d(x_m,y_m))}.
\]
Consequently,
\[
 \lim_{m\to\infty} \frac{\norm{f_m(x_m)-f_m(y_m)}_E}{\varphi(d(x_m,y_m))}\leq \frac{2M}{\varphi(d(a,b))}<+\infty,
\]
which is absurd. Thus, $a=b$. This means that $(x_m,y_m) \in (B_X(a,\delta)\times B_X(a,\delta))\cap \tilde{X} \subseteq \bigcup_{j=1}^N U_j$ for all but finitely many $m \in \mathbb N$. Therefore, there is an index $i \in \{1,\ldots,N\}$ such that $(x_m,y_m) \in U_i$ for infinitely many $m$'s. Passing to a subsequence if necessary, we may actually claim that the above relation holds for all $m \in \mathbb N$. If we fix $f \in A$ and $(\xi,\eta)\in U_i$, then in view of the condition ($\Lambda$) for all $m \in \mathbb N$ we have
\begin{align*}
 \frac{\norm{f_m(x_m)-f_m(y_m)}_E}{\varphi(d(x_m,y_m))}& \leq \frac{\norm{(f_m-f)(x_m)-(f_m-f)(y_m)}_E}{\varphi(d(x_m,y_m))}+\frac{\norm{f(x_m)-f(y_m)}_E}{\varphi(d(x_m,y_m))}\\
 & \leq \sup_{(p,q)\in U_i}\frac{\norm{(f_m-f)(p)-(f_m-f)(q)}_E}{\varphi(d(p,q))}+\frac{\norm{f(x_m)-f(y_m)}_E}{\varphi(d(x_m,y_m))}\\
 & \leq 1+\frac{\norm{(f_m-f)(\xi)-(f_m-f)(\eta)}_E}{\varphi(d(\xi,\eta))}+\frac{\norm{f(x_m)-f(y_m)}_E}{\varphi(d(x_m,y_m))}\\
 & \leq 1 + 4M/\varphi(d(\xi,\eta)) + \abs{f}_{\varphi}<+\infty.
\end{align*}
This is impossible.  Therefore, $\sup_{f\in A} \abs{f}_{\varphi}<+\infty$. Consequently, the set $A$ is bounded in $\Lip_\varphi(X,E)$. 

Now, we focus on the second -- main -- step of this part of the proof. (Once again we ``reset'' our notation.) Fix $\varepsilon>0$ and $n \in \mathbb N$. Then, by the condition~\ref{it:Lambda}, there is a radius $\delta>0$ and an open cover $U_1,\ldots,U_k$ of $\tilde X$ with $\bigcup_{x\in X} (B_X(x,\delta)\times B_X(x,\delta))\cap\tilde X\subseteq \bigcup_{i=1}^k U_i\subseteq \bigcup_{x\in X} (B_X(x,\frac{1}{n})\times B_X(x,\frac{1}{n}))\cap\tilde X$ such that for any $f,g\in A$ and $i\in \{1,\ldots,k\}$ we have
\[
\sup_{(x,y)\in U_i} \frac{\norm{(f-g)(x)-(f-g)(y)}_E}{\varphi(d(x,y))}-\inf_{(x,y)\in U_i}\frac{\norm{(f-g)(x)-(f-g)(y)}_E}{\varphi(d(x,y))}\leq \varepsilon.
\]
From the previous step we know that the set $A$ is bounded in $\Lip_{\varphi}(X,E)$. Hence, we can apply Lemma~\ref{lem:introGH} with $\frac{1}{2}\varepsilon$ and $\frac{1}{4}\delta$ to find a finite open cover $W_{k+1},\ldots, W_N$ of $\tilde X_{\frac{1}{4}\delta}$ such that 
\[
 \sup_{f \in A} \norm[\Bigg]{\frac{f(x)-f(y)}{\varphi(d(x,y))} - \frac{f(\xi)-f(\eta)}{\varphi(d(\xi,\eta))}}_E \leq \frac{1}{2}\varepsilon
\]
for any $(x,y),(\xi,\eta)$ belonging to the same member of the collection $W_{k+1},\ldots, W_N$. For every $i=k+1,\ldots,N$ let $U_i:=W_i\setminus \dset{(x,y) \in X\times X}{\inf_{z \in X}\max\{d(x,z),d(y,z)\}\leq \frac{1}{2}\delta}$. We may clearly assume that all the sets $U_i$ are non-empty; otherwise we just remove the empty ones from our family.

We claim that the sets $U_i$ for $i \in \{k+1,\ldots,N\}$ are open in $X \times X$, and hence in $\tilde X$. So, let us fix a set $U_j$ with $j \in \{k+1,\ldots,N\}$  and take any point $(a,b) \in U_j$. Since $(a,b) \in W_j$, and the set $W_j$ is open in $\tilde X_{\frac{1}{4}\delta}$, there is a radius $r>0$ such that $(B_X(a,r) \times B_X(b,r)) \cap \tilde X_{\frac{1}{4}\delta}\subseteq W_j$. Set $\rho:=\inf_{z \in X}\max\{d(a,z),d(b,z)\}$ and $R:=\min\{r,\frac{1}{4}\delta,\rho-\frac{1}{2}\delta\}$. Note that by definition we have $\rho>\frac{1}{2}\delta$. So, the radius $R$ is well-defined. We will show now that the open ball $B_{X}(a,R)\times B_X(b,R)$ in $X\times X$ is included in $U_j$. Clearly, it is included in $B_X(a,r) \times B_X(b,r)$. Furthermore, $B_{X}(a,R)\times B_X(b,R) \subseteq \tilde X_{\frac{1}{4}\delta}$. Otherwise, there would exist points $u,p,q \in X$ such that $d(a,p)<R$, $d(b,q)<R$, $d(p,u)<\frac{1}{4}\delta$ and $d(q,u)<\frac{1}{4}\delta$. This, in turn, would imply that $d(u,a)\leq d(a,p)+d(p,u)<\frac{1}{2}\delta$. And, similarly, $d(u,b)<\frac{1}{2}\delta$. But this is impossible, because $\rho= \inf_{z \in X}\max\{d(a,z),d(b,z)\}>\frac{1}{2}\delta$. Therefore, $B_{X}(a,R)\times B_X(b,R) \subseteq (B_X(a,r) \times B_X(b,r))\cap \tilde X_{\frac{1}{4}\delta} \subseteq W_j$. 

It remains to show that the sets $\dset{(x,y) \in X\times X}{\inf_{z \in X}\max\{d(x,z),d(y,z)\}\leq \frac{1}{2}\delta}$ and $B_{X}(a,R)\times B_X(b,R)$ are disjoint. Suppose, on the contrary, that there is a point $(\alpha,\beta) \in  B_{X}(a,R)\times B_X(b,R)$ such that $\inf_{z \in X}\max\{d(\alpha,z),d(\beta,z)\}\leq \frac{1}{2}\delta$. As the function $z \mapsto \max\{d(\alpha,z),d(\beta,z)\}$ is continuous and the metric space $X$ is compact, there is $\zeta\in X$ such that $\inf_{z \in X}\max\{d(\alpha,z),d(\beta,z)\}=\max\{d(\alpha,\zeta),d(\beta,\zeta)\}$. Now, we have
\begin{align*}
\rho&=\inf_{z \in X}\max\{d(a,z),d(b,z)\}\\
&\leq \max\{d(a,\zeta),d(b,\zeta)\}\\
&\leq \max\{d(a,\alpha),d(b,\beta)\} + \max\{d(\alpha,\zeta),d(\beta,\zeta)\}\\
&< \rho-\tfrac{1}{2}\delta + \tfrac{1}{2}\delta = \rho.
\end{align*}
This is absurd. Thus, the sets $\dset{(x,y) \in X\times X}{\inf_{z \in X}\max\{d(x,z),d(y,z)\}\leq \frac{1}{2}\delta}$ and $B_{X}(a,R)\times B_X(b,R)$ are disjoint. Consequently, $B_{X}(a,R)\times B_X(b,R) \subseteq U_j$. Hence, the sets $U_{k+1},\ldots,U_N$ are open in $\tilde{X}$.

The rest of the proof is straightforward. It is not difficult to see that the family of sets $U_1,\ldots, U_k, U_{k+1},\ldots, U_N$ satisfies all the requirements of the condition~\ref{it:L}.
\end{proof}

From Remark~\ref{rem:open_closed_in_L_LDS}, Theorem~\ref{thm:compact_in_Lip} and Proposition~\ref{lem:GH} we immediately get the following second compactness criterion in the space of Lipschitz continuous functions $\Lip_\varphi(X,E)$.

\begin{theorem}\label{thm:compact_in_Lip'}
Let $X$ be a compact metric space. A non-empty subset $A$ of $\Lip_\varphi(X,E)$ is precompact if and only if the set $A-A$ satisfies the condition ($\Lambda$) and the sections $A(x)$ for $x \in X$ are precompact.
\end{theorem}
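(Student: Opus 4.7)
The plan is to deduce Theorem~\ref{thm:compact_in_Lip'} directly from the three results that have already been established in this section: Theorem~\ref{thm:compact_in_Lip}, Remark~\ref{rem:open_closed_in_L_LDS}, and Proposition~\ref{lem:GH}. No genuinely new argument is needed; the theorem is essentially the specialization of the general criterion to the case of a compact domain, reformulated in the ``localized'' language of condition~($\Lambda$).

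For the forward implication, I would start by assuming $A\subseteq \Lip_\varphi(X,E)$ is precompact. Theorem~\ref{thm:compact_in_Lip} immediately yields two things: each section $A(x)$ is precompact, and the algebraic difference $A-A$ satisfies condition~(L). Since we are free, by Remark~\ref{rem:open_closed_in_L_LDS}, to choose the finite cover $U_1,\ldots,U_N$ of $\tilde X$ appearing in~(L) to consist of open subsets of $\tilde X$, the hypothesis of the implication $\ref{it:L}\Rightarrow\ref{it:Lambda}$ of Proposition~\ref{lem:GH} is satisfied (note that this direction of Proposition~\ref{lem:GH} needs only the precompactness of the sections, which we already have). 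Applying it gives condition~($\Lambda$) for $A-A$.

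For the reverse implication, I would assume that $A-A$ satisfies condition~($\Lambda$) and that the sections $A(x)$ are precompact for every $x \in X$. The second assumption is exactly what is required to invoke the implication $\ref{it:Lambda}\Rightarrow\ref{it:L}$ of Proposition~\ref{lem:GH}, which then delivers condition~(L) for $A-A$ with an open cover of $\tilde X$. At this point the hypotheses of Theorem~\ref{thm:compact_in_Lip} are satisfied, and that theorem gives precompactness of $A$ in $\Lip_\varphi(X,E)$.

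Since both implications are essentially one-line consequences of previously proved results, there is no substantial obstacle here: the real content of the theorem has already been absorbed into Proposition~\ref{lem:GH}, where the non-trivial work (boundedness from condition~($\Lambda$), the tube-style argument of Lemma~\ref{lem:tube}, and the ``away from the diagonal'' equicontinuity estimate of Lemma~\ref{lem:introGH}) was carried out. The only point that warrants a brief word in the written-up proof is to acknowledge explicitly that Remark~\ref{rem:open_closed_in_L_LDS} is being used to produce the open cover required by Proposition~\ref{lem:GH}\ref{it:L}, so that the logical chain $\text{precompact}\Rightarrow(L)\text{ with open sets}\Rightarrow(\Lambda)$ and its reverse are both genuinely available.
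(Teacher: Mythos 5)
Your proposal is correct and follows exactly the route the paper takes: the theorem is stated as an immediate consequence of Remark~\ref{rem:open_closed_in_L_LDS}, Theorem~\ref{thm:compact_in_Lip} and Proposition~\ref{lem:GH}, which is precisely the chain of implications you spell out. Your write-up merely makes explicit the two directions that the paper leaves as an ``immediate'' deduction.
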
 

\begin{remark}
Theorem~\ref{thm:compact_in_Lip'} provides also a compactness criterion in the space $\Lip_0(X,E)$ with $X$ being a compact metric space (cf.~Remark~\ref{rem:Lip_0}).
\end{remark}

In Corollary~\ref{rem:DS_B_3} we saw that in the case of $C(X,E)$, when the domain $X$ is compact, we can replace the assumption ``$A-A$ satisfies the condition (B)'' with ``$A$ satisfies the condition (DS)''. The conditions (L) and (LDS), we consider in this section, can be viewed as versions of (B) and (DS) for $\Phi(A)$, respectively. Therefore, a natural question is whether in Theorem~\ref{thm:compact_in_Lip'} instead of the conditions ($\Lambda$) or (L) we can use (LDS)? The answer is negative, as shown by the following simple example.

\begin{example}
Consider the Banach space $l^\infty$ of all bounded real sequences, endowed with the supremum norm $\norm{\cdot}_\infty$. For each $n \in \mathbb N$ let $x_n:=(0,\ldots,0,\frac{1}{n},0,\ldots)$, where the only non-zero element is at the $n$-th place. Moreover, let $x_0$ be the zero sequence. Set $X:=\dset{x_n}{n \in \mathbb N \cup\{0\}}$. Because $\norm{x_n}_\infty \to 0$ when $n \to +\infty$, the metric space $X$ is compact (in the metric inherited from $l^\infty$). Now, for $a \in [0,1]$ define $f_a \colon X \to l^\infty$ by $f_a(x)=ax$. And, consider $A:=\dset{f_a}{a\in [0,1]}$. Clearly, $A\subseteq \Lip(X,l^\infty)$.

Observe that $A(x_0)$ is a singleton, and $A(x_n)$ for every $n \in \mathbb N$ is homeomorphic with the interval $[0,1]$. This means that each section $A(x)$ for $x \in X$ is (pre)compact. Further, note that for any point $(x,y) \in \tilde{X}$ and any $a,b \in [0,1]$ we have
\[
 \frac{\norm{(f_a-f_b)(x)-(f_a-f_b)(y)}_{\infty}}{\norm{x-y}_{\infty}}=\abs{a-b}.
\]
Therefore, the set $A-A$ satisfies the condition (L) with $U_1:=\tilde{X}$. By Proposition~\ref{lem:GH}, it also satisfies the condition ($\Lambda$).

Finally, we claim that the set $A$ does not satisfy the condition (LDS). Let $W_1,\ldots,W_N$ be any cover of $\tilde X$. As there are infinitely many points $(x_n,x_0)$, there is an element $W_i$ of the cover that contains (at least) two of them, say $(x_n,x_0)$ and $(x_m,x_0)$. Then,
\[
 \norm[\Bigg]{\frac{f_1(x_n)-f_1(x_0)}{\norm{x_n-x_0}_\infty} - \frac{f_1(x_m)-f_1(x_0)}{\norm{x_m-x_0}_\infty}}_{\infty} =  \norm[\Bigg]{\frac{x_n}{\norm{x_n}_\infty} - \frac{x_m}{\norm{x_m}_\infty}}_{\infty}=1.
\]
This proves our claim.

As a closing remark, note that the above example works only because the target space $l^\infty$ is infinite-dimensional. If its dimension were finite, it might not be possible to find a set $W_i$ containing two different points $(x_n,x_0)$ and $(x_m,x_0)$.
\end{example}

\subsection{Compactness criterion in $\lip_\varphi(X,E)$}
\label{sec:55}

In the theory of Lipschitz spaces special attention is paid to the subclass $\lip_\varphi(X,E)$ of $\Lip_\varphi(X,E)$ called the \emph{little Lipschitz space}. It, along with its various modifications and versions, appears naturally, when the problem of the second predual of $\Lip_\varphi(X,E)$ is investigated. Although it is possible to define little Lipschitz space with $X$ being non-compact (Weaver does it in Section~4.2 of his book~\cite{weaver}), it requires much more technicalities. Since $\lip_\varphi(X,E)$ is not the main object of our study, and the precompactness criterion for this space heavily relies on the results from the previous section, in the sequel we will assume that the metric space $X$ is compact.    

Let us now introduce the little Lipschitz space properly. Fix a comparison function $\varphi$. Also, let $(X, d)$ be a compact metric space and
let $(E,\norm{\cdot}_E)$ be a normed space. The class $\lip_\varphi(X,E)$ consists of all functions from $\Lip_\varphi(X,E)$ that satisfy the following condition
\begin{equation*}\label{eq:little_lipschitz_def}
 \lim_{\delta\to 0^+} \sup\dset[\Bigg]{\frac{\norm{f(x)-f(y)}_E}{\varphi(d(x,y))}}{\text{$x,y \in X$ with $0<d(x,y) \leq \delta$}} = 0.
\end{equation*}
Equivalently, a map $f \in \Lip_{\varphi}(X,E)$ belongs to $\lip_\varphi(X,E)$, if for every $\varepsilon>0$ there exists $\delta>0$ such that $\norm{f(x)-f(y)}_E\leq \varepsilon \varphi(d(x,y))$ for all $x,y \in X$ with $d(x,y) \leq \delta$. Mappings satisfying the above equivalent conditions are called \emph{locally flat}. 

The space $\lip_\varphi(X,E)$ is endowed with one of the equivalent norms $\norm{\cdot}_\varphi$ or $\vnorm{\cdot}_\varphi$ inherited from $\Lip_\varphi(X,E)$. Further, it is a Banach space, when $E$ is complete. It is easy to check that $\lip_\varphi(X,E)$ is a closed subspace of $\Lip_\varphi(X,E)$. However, in general, $\lip_\varphi(X,E)\neq \Lip_\varphi(X,E)$; to see this it suffices to set $\varphi(t)=t$, $X:=[0,1]$, $E:=\mathbb R$ and take the function $f(x)=x$. Actually, it turns out that in this setting the little Lipschitz space consists only of the constant functions (see~\cite{weaver}*{Example~4.8}). What is surprising is that this phenomenon holds more generally. If $X$ is a compact connected Riemannian manifold and $\varphi(t)=t$, then $\lip_\varphi(X,\mathbb R)$ is one-dimensional and consists of constant functions only (see~\cite{weaver}*{Example~4.9}).

On the other hand, for any comparison function $\varphi$ such that $\lim_{t \to 0^+} \varphi(t)/t=+\infty$, the class $\Lip(X,\mathbb R)$ is dense in $\lip_\varphi(X, \mathbb R)$ (see~\cite{hanin}*{Proposition~4} and cf.~\cite{weaver2}*{Corollary~1.5}).

Now, we move to characterizing precompact subsets of the little Lipschitz space $\lip_\varphi(X,E)$. This time there is no need to distinguish between $E$ being finite- or infinite-dimensional. There is a very simple explanation of this phenomenon. Namely, if $f \in \lip_{\varphi}(X,E)$, then the mapping $\Phi(f)$, where $\Phi$ is the de Leeuw's map, can be extended continuously from $\tilde X$ over the whole (compact) product $X \times X$ simply by putting $\Phi(f)(x,x)=0$ for $x \in X$. 

\begin{theorem}\label{th:little:lip}
Let $X$ be a compact metric space. A non-empty subset $A$ of $\lip_\varphi(X,E)$ is precompact if and only if
\begin{enumerate}[label=\textup{(\roman*)}]
 \item the sections $A(x)$ are precompact for every $x \in X$, and
 \item the set $A$ is \emph{uniformly locally flat}, that is, for every $\varepsilon>0$ there exists $\delta>0$ such that for any $x,y \in X$ with $d(x,y)\leq \delta$ we have $\norm{f(x)-f(y)}_E \leq \varepsilon \varphi(d(x,y))$ for every $f\in A$.
\end{enumerate}
\end{theorem}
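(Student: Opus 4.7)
The plan is to reduce to the already-established precompactness criterion for the ambient space, namely Theorem~\ref{thm:compact_in_Lip'}. Since $\lip_\varphi(X,E)$ is a closed subspace of $\Lip_\varphi(X,E)$ equipped with the same norm $\norm{\cdot}_\varphi$, a subset of $\lip_\varphi(X,E)$ is precompact there if and only if it is precompact in $\Lip_\varphi(X,E)$. Thus it suffices to show that conditions~(i) and~(ii) together are equivalent to condition~(i) together with ``$A-A$ satisfies~($\Lambda$)''.

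For the \emph{sufficiency} direction, I would first note that uniform local flatness of $A$ implies uniform local flatness of $A-A$ with twice the constant, via the triangle inequality $\norm{(f-g)(x)-(f-g)(y)}_E \leq \norm{f(x)-f(y)}_E + \norm{g(x)-g(y)}_E$. Then, given $\varepsilon > 0$ and $n \in \mathbb N$, I apply uniform local flatness of $A$ with constant $\varepsilon/2$ to obtain a radius $\delta_0 > 0$, and set $\delta := \min\{\delta_0/2, 1/(2n)\}$. Taking the single open set $U_1 := \bigcup_{x \in X}(B_X(x,\delta)\times B_X(x,\delta)) \cap \tilde X$ in the condition~($\Lambda$), one verifies that any $(x,y) \in U_1$ satisfies $d(x,y) < 2\delta \leq \delta_0$, so the difference quotient $\norm{(f-g)(x)-(f-g)(y)}_E/\varphi(d(x,y))$ is bounded by $\varepsilon$ for all $f,g\in A$. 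Hence its $\sup - \inf$ oscillation on $U_1$ is at most $\varepsilon$ (the infimum being non-negative), and the inclusions demanded in~($\Lambda$) hold by construction. Theorem~\ref{thm:compact_in_Lip'} then delivers precompactness of $A$.

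For the \emph{necessity} direction, precompactness of each section $A(x)$ follows from the continuity of the evaluation map $f \mapsto f(x)$ from $(\lip_\varphi(X,E),\norm{\cdot}_\varphi)$ into $E$, whose norm is controlled by $1 + \varphi(d(x,x_*))$; continuous images of precompact sets are precompact. To obtain uniform local flatness, I would fix $\varepsilon > 0$, use precompactness of $A$ to pick a finite $(\varepsilon/2)$-net $\{f_1,\ldots,f_N\} \subseteq A$, and exploit the individual local flatness of each $f_i$ to choose a common $\delta > 0$ with $\norm{f_i(x)-f_i(y)}_E \leq (\varepsilon/2)\varphi(d(x,y))$ whenever $d(x,y) \leq \delta$ and $i \in \{1,\ldots,N\}$. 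For arbitrary $f \in A$, selecting $f_i$ with $\norm{f-f_i}_\varphi \leq \varepsilon/2$ and using $|f-f_i|_\varphi \leq \varepsilon/2$ yields $\norm{f(x)-f(y)}_E \leq \norm{(f-f_i)(x)-(f-f_i)(y)}_E + \norm{f_i(x)-f_i(y)}_E \leq \varepsilon \varphi(d(x,y))$ for $d(x,y) \leq \delta$.

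I do not anticipate a serious obstacle: the condition~($\Lambda$) was tailored to capture exactly the kind of diagonal-localised control that uniform local flatness provides pointwise for each element, and the two directions mirror each other through Theorem~\ref{thm:compact_in_Lip'}. The only point requiring care is the bookkeeping in sufficiency, where a single $\delta$ must simultaneously fit inside the outer $(1/n)$-tube prescribed by~($\Lambda$) and inside the flatness radius $\delta_0$; the choice $\delta := \min\{\delta_0/2, 1/(2n)\}$ handles both requirements without the need to assemble multiple covering pieces.
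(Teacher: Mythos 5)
Your proof is correct and follows the same overall strategy as the paper: reduce to Theorem~\ref{thm:compact_in_Lip'} by showing that uniform local flatness yields condition~($\Lambda$) for $A-A$, and recover uniform local flatness from precompactness via a finite $\tfrac{1}{2}\varepsilon$-net; your necessity direction is essentially identical to the paper's. The one genuine difference is in the sufficiency step: the paper covers the diagonal of $X\times X$ by finitely many products of balls $B_X(x_i,\tfrac{1}{m})\times B_X(x_i,\tfrac{1}{m})$ and then invokes Lemma~\ref{lem:tube} to obtain the inner inclusion required by~($\Lambda$), whereas you take the single open set $U_1:=\bigcup_{x\in X}(B_X(x,\delta)\times B_X(x,\delta))\cap\tilde X$, for which both inclusions hold tautologically. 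This works because the oscillation bound in~($\Lambda$) only needs $0\leq \norm{(f-g)(x)-(f-g)(y)}_E/\varphi(d(x,y))\leq\varepsilon$ throughout $U_1$, which uniform local flatness of $A-A$ delivers on the whole $\delta$-tube at once; no finiteness or smallness of the individual pieces is needed. Your route is therefore slightly more economical (it bypasses Lemma~\ref{lem:tube} and the compactness-based extraction of a finite subcover of the diagonal), at no cost in generality for this theorem; the paper's ball-based cover is the pattern its authors reuse elsewhere (e.g.\ in Proposition~\ref{lem:GH}), which presumably explains their choice.
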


\begin{proof}
Fix $\varepsilon>0$. Since $A$ is a uniformly locally flat set of functions,  there exists $\eta>0$ such that $\norm{f(x)-f(y)}_E\leq \frac{1}{2}\varepsilon \varphi(d(x,y))$ for all $f \in A$ and all $x,y \in X$ with $d(x,y)\leq \eta$. (To make it easier to refer to the previous results, we have changed the notation a little bit here.) Furthermore, let us fix $n \in \mathbb N$, and choose $m \in \mathbb N$ so that $\frac{1}{m}\leq \min\{\frac{1}{2}\eta,\frac{1}{n}\}$. Now, pick a finite family of points $x_1,\ldots,x_N \in X$ such that the diagonal of $X\times X$ is covered by the union of the balls  $B_X(x_i,\frac{1}{m})\times B_X(x_i,\frac{1}{m})$, where $i\in \{1,\ldots,N\}$.
 Define $U_i:=(B_X(x_i,\frac{1}{m})\times B_X(x_i,\frac{1}{m}))\cap \tilde X$ for $i\in \{1,\ldots,N\}$. Then, clearly $\bigcup_{i=1}^N U_i \subseteq \bigcup_{x\in X}(B_X(x,\frac{1}{n})\times B_X(x,\frac{1}{n}))\cap \tilde X$. Moreover, by Lemma~\ref{lem:tube} (applied with $m$) there is a radius $\delta>0$ such that $\bigcup_{x\in X}(B_X(x,\delta)\times B_X(x,\delta))\cap \tilde X \subseteq \bigcup_{i=1}^N U_i$. If we fix $f,g \in A$ and $i\in \{1,\ldots,N\}$, then by the uniform local flatness of $A$ we obtain
\begin{align*}
& \sup_{(x,y)\in U_i} \frac{\norm{(f-g)(x)-(f-g)(y)}_E}{\varphi(d(x,y))}-\inf_{(x,y)\in U_i}\frac{\norm{(f-g)(x)-(f-g)(y)}_E}{\varphi(d(x,y))}\\
&\qquad \leq \sup_{(x,y)\in U_i} \frac{\norm{(f-g)(x)-(f-g)(y)}_E}{\varphi(d(x,y))} \leq \varepsilon.
\end{align*}
Thus, by Theorem~\ref{thm:compact_in_Lip'} we can conclude that $A$ is precompact in $\Lip_\varphi(X,E)$, and consequently in $\lip_\varphi(X,E)$.

To prove the opposite implication, let us assume that $A$ is a precompact subset of $\lip_\varphi(X,E)$ and $f_1,\ldots,f_m\in A$ is its finite $\frac{1}{2}\varepsilon$-net. Choose $\delta>0$ so that $\max_{1\leq i\leq m}\norm{f_i(x)-f_i(y)}_E\leq \frac{1}{2}\varepsilon\varphi(d(x,y))$ for $x,y \in X$ with $d(x,y)\leq \delta$. For any $f\in A$ there is some $j\in \{1,\ldots,m\}$ for which we have $\norm{f-f_j}_\varphi \leq \frac{1}{2}\varepsilon$. Hence,
\begin{align*}
\norm{f(x)-f(y)}_E &\leq \norm{(f-f_j)(x)-(f-f_j)(y)}_E+\norm{f_j(x)-f_j(y)}_E\\
& \leq \tfrac{1}{2}\varepsilon\varphi(d(x,y))+ \tfrac{1}{2}\varepsilon\varphi(d(x,y))=\varepsilon\varphi(d(x,y)),
\end{align*}
whenever $x,y \in X$ are such that $d(x,y)\leq\delta$. This shows that the family $A$ is uniformly locally flat. To end the proof it suffices to observe that the Lipschitz norm $\norm{\cdot}_\varphi$ is stronger than the supremum norm $\norm{\cdot}_\infty$, and hence precompactness of $A$ implies precompactness of each section $A(x)$, where $x \in X$.
\end{proof}

\begin{remark}
Theorem~\ref{th:little:lip} can be also applied to the space $\lip_0(X,E)$, which consists of those functions from $\Lip_0(X,E)$ that are locally flat.
\end{remark}

\begin{remark}
Theorem~\ref{th:little:lip} in the case when $E:=\mathbb R$ and $\varphi(t)=t$ was proven by Johnson using a different approach than ours (see~\cite{johnson}*{Theorem~3.2}). A similar result was also obtained by Garc\'{\i}a-Lirola \emph{et al.}, who, motivated by some considerations of duality, studied compactness in a certain little Lipschitz space (see~\cite{GPR}*{Lemma~2.7}). To be more specific, they worked in a subspace of $\lip_0(X,\mathbb R)$ consisting of those functions that are also continuous with respect to a topology $\tau$ on $X$. Furthermore, they assumed that $X$ is compact with respect to $\tau$ and that the metric $d$ of $X$ is $\tau$-lower semi-continuous.
\end{remark}

\subsection*{Acknowledgement}
We thank Luis Garc\'{\i}a-Lirola, Colin Petitjean and Abraham Rueda Zoca for bringing their paper~\cite{GPR} to our attention. We also thank the anonymous referee for his/her valuable comments. Firstly, they inspired us to provide more
in-depth discussion of the compactness criteria for spaces of Lipschitz mappings presented in Section~5. And secondly, they helped us to improve the exposition and readability of the paper.

\begin{bibdiv}
\begin{biblist}

\bib{AKORR}{book}{
  author={Agarwal, R. P.},
  author={Karap\i nar, E.},
  author={O'Regan, D.},
  author={Rold\'{a}n-L\'{o}pez-de-Hierro, A. F.},
  title={Fixed point theory in metric type spaces},
  publisher={Springer, Cham},
  date={2015},
}

\bib{alexiewicz1969analiza}{book}{
  title={Analiza funkcjonalna \textup(Functional analysis\textup)},
  author={Alexiewicz, A.},
  number={vol. 49},
  series={Monografie Matematyczne},
  year={1969},
  publisher={Pa{\'n}stwowe Wydawn. Naukowe, Warszawa},
  language={in Polish}
}

  \bib{Ambrosetti}{article}{
     author={Ambrosetti, A.},
     title={Un teorema di esistenza per le equazioni differenziali negli spazi di Banach},
     year={1967},
     journal={Rend. Sem. Mat. Univ. Padova},
     volume={39},
     pages={349--360},
   }

\bib{banas_nalepa}{article}{
author = {Bana\'s, J.},
author = {Nalepa, R.},
title = {On the space of functions with growths tempered by a modulus of continuity and its applications},
volume = {2013},
journal = {Journal of Function Spaces},
year = {2013},
number = {01}
}

\bib{banas_nalepa2}{article}{
author = {Bana\'s, J.},
author = {Nalepa, R.},
title = {A measure of noncompactness in the space of functions with tempered increments on the half-axis and its applications},
journal = {Journal of Mathematical Analysis and Applications},
volume = {474},
number = {2},
pages = {1551--1575},
year = {2019},
}

\bib{BW}{article}{
   author={Berninger, H.},
	 author={Werner, D.},
   title={Lipschitz spaces and M-ideals},
	 journal={Extracta Math.},
	 volume={18},
	 pages={33--56},
	 date={2003},
}

\bib{BBK}{book}{
  author={Borkowski, M.},
  author={Bugajewska, D.},
  author={Kasprzak, P.},
  title={Selected problems in nonlinear analysis},
	publisher={The Nicolaus Copernicus University Press},
	address={Toru\'n},
	date={2021},
}

\bib{cobzas2001}{article}{
author = {Cobza{\c{s}}, {\c{S}}.},
year = {2001},
number = {01},
pages = {9--14},
title = {Compactness in spaces of Lipschitz functions},
volume = {30},
journal = {Revue d’Analyse Numérique et de Théorie de l’Approximation}
}

\bib{cobzas_book}{book}{
  title={Lipschitz functions},
  author={Cobza{\c{s}}, {\c{S}}.},
  author={Miculescu, R.},
  author={Nicolae, A.},
  series={Lecture Notes in Mathematics},
  year={2019},
  publisher={Springer International Publishing}
}

\bib{Conway}{book}{
   author={Conway, J. B.},
   title={A course in functional analysis},
   series={Graduate Texts in Mathematics},
   volume={96},
   edition={2},
   publisher={Springer-Verlag, New York},
   date={1990},
}

\bib{DS}{book}
{
title={Linear operators: general theory},
author={Dunford, N.},
author={Schwartz, J. T.},
  series={Pure and Applied Mathematics},
  year={1958},
  publisher={Interscience Publishers},
}

\bib{evans}{book}{
  title={Partial differential equations},
  author={Evans, L.C.},
  series={Graduate studies in mathematics},
  year={2010},
  publisher={American Mathematical Society}
}

\bib{eveson}{article}{
   author={Eveson, S. P.},
   title={Compactness criteria and measures of noncompactness in function
   spaces},
   journal={J. London Math. Soc. (2)},
   volume={62},
   date={2000},
   number={1},
   pages={263--277},
}

\bib{GPR}{article}{
   author={Garc\'{\i}a-Lirola, L.},
   author={Petitjean, C.},
   author={Rueda Zoca, A.},
   title={On the structure of spaces of vector-valued Lipschitz functions},
   journal={Studia Math.},
   volume={239},
   date={2017},
   number={3},
   pages={249--271},
}

\bib{gilbarg_trudinger}{book}{
author ={Gilbarg, D.},
author={Trudinger, N. S.},
title={Elliptic partial differential equations of second order}, 
publisher={Springer-Verlag, New York/Berlin}, 
year={1983},
}

\bib{GV}{book}{
   author={Gorenflo, R.},
   author={Vessella, S.},
   title={Abel integral equations. Analysis and applications},
   series={Lecture Notes in Mathematics},
   volume={1461},
   publisher={Springer-Verlag, Berlin},
   date={1991},
}

\bib{GKM}{article}{
   author={Gulgowski, J.},
	 author={Kasprzak, P.},
	 author={Ma\'ckowiak, P.},
   title={Compactness in normed spaces: a unified approach through semi-norms},
   journal={Topol. Methods Nonlinear Anal.},
	 note={(in press)},
}

\bib{hanin}{article}{
   author={Hanin, L. G.},
   title={Kantorovich-Rubinstein norm and its application in the theory of
   Lipschitz spaces},
   journal={Proc. Amer. Math. Soc.},
   volume={115},
   date={1992},
   number={2},
   pages={345--352},
}

\bib{johnson}{article}{
   author={Johnson, J. A.},
   title={Banach spaces of Lipschitz functions and vector-valued Lipschitz
   functions},
   journal={Trans. Amer. Math. Soc.},
   volume={148},
   date={1970},
   pages={147--169},
}

\bib{kreyszig}{book}{
   author={Kreyszig, E.},
   title={Introductory functional analysis with applications},
   publisher={John Wiley \& Sons, New York-London-Sydney},
   date={1978},
}

\bib{deLeeuw}{article}{
	 author={de Leeuw, K.},
	 title={Banach spaces of Lipschitz functions},
	 journal={Studia Math.},
	 volume={21}, 
	 pages={55--66},
	 date={1961/1962},
 }

\bib{Lojasiewicz}{book}{
   author={\L ojasiewicz, S.},
   title={An introduction to the theory of real functions},
   series={A Wiley-Interscience Publication},
   edition={3},
   note={With contributions by M. Kosiek, W. Mlak and Z. Opial;
   translated from the Polish by G. H. Lawden;
   translation edited by A. V. Ferreira},
   publisher={John Wiley \& Sons, Ltd., Chichester},
   date={1988},
}

\bib{MV}{book}{
  author={Meise, R.},
  author={Vogt, D.},
  title={Introduction to functional analysis},
  series={Oxford Graduate Texts in Mathematics},
  volume={2},
  publisher={The Clarendon Press, Oxford University Press, New York},
  date={1997},
}

\bib{phillips}{article}{
   author={Phillips, R. S.},
   title={On linear transformations},
   journal={Trans. Amer. Math. Soc.},
   volume={48},
   date={1940},
   pages={516--541},
}

\bib{Saiedinezhad}{article}{
author={Saiedinezhad, S.},
title = {On a measure of noncompactness in the Holder space $C^{k,\gamma}(\Omega)$ and its application},
journal = {Journal of Computational and Applied Mathematics},
volume = {346},
pages = {566--571},
year = {2019},
}

\bib{samko}{book}{
author = {Samko, S. G.},
author ={Kilbas, A. A.},
author={Marichev, O. I.}, 
title={Fractional Integrals and Derivatives: Theory and Applications}, 
publisher = {Gordon and Breach, New York}, 
year = {1993},
}

\bib{weaver2}{article}{
   author={Weaver, N.},
   title={Subalgebras of little Lipschitz algebras},
   journal={Pacific J. Math.},
   volume={173},
   date={1996},
   number={1},
   pages={283--293},
}

\bib{weaver}{book}{
author = {Weaver, N.},
title = {Lipschitz Algebras},
publisher = {World Scientific Publishing Co. Pte. Ltd},
address={Singapore},
year = {2018},
}

\end{biblist}
\end{bibdiv}

\end{document}